\documentclass[12pt]{article}
\usepackage{amsmath,amscd,amsbsy,amssymb,latexsym,url,bm,amsthm}
\usepackage[vlined,boxed,commentsnumbered,linesnumbered,ruled]{algorithm2e}
\usepackage{epsfig,graphicx,subfigure}
\usepackage{enumitem,balance,mathtools}
\usepackage{wrapfig}
\usepackage{mathrsfs, euscript}
\usepackage{xcolor}
\usepackage{hyperref}
\usepackage{epstopdf}
\usepackage{float}
\usepackage{comment}
\usepackage{abstract}
\usepackage[title]{appendix}

\newcommand{\R}{\mathbb R}

\newcommand{\N}{\mathbb N}

\author{}
\title{Well-Posedness of The Compressible Boundary Layer Equations with Data in the Gevrey Class}
\date{}
\begin{document}
	\maketitle
        \vspace{-2cm}
\newtheorem{theorem}{Theorem}[section]
\newtheorem{lemma}[theorem]{Lemma}
\newtheorem{proposition}[theorem]{Proposition}
\newtheorem{assumption}[theorem]{Assumption}
\newtheorem{corollary}[theorem]{Corollary}
\newtheorem{definition}[theorem]{Definition}
\newtheorem{remark}{Remark}[section]
\newtheorem{example}[theorem]{Example}
\newtheorem{exercise}{Exercise}
\newenvironment{solution}{\begin{proof}[Solution]}{\end{proof}}
	\setlength{\lineskiplimit}{2.625bp}
	\setlength{\lineskip}{2.625bp}
	\numberwithin{equation}{section}
	\newenvironment{partlist}[1][]
	{\begin{enumerate}[itemsep=0pt, label=(\arabic*), wide, labelindent=\parindent, listparindent=\parindent, #1]}
		{\end{enumerate}}
	\setcounter{page}{1}

\begin{center}

Ya-Guang Wang\footnote{email address: ygwang@sjtu.edu.cn}\\
School of Mathematical Sciences, MOE-LSC and SHL-MAC, Shanghai Jiao Tong University, 200240 Shanghai, China\\[3mm]
Yi-Lei  Zhao\footnote{corresponding author, email address:  zhaoyilei@sjtu.edu.cn}\\
School of Mathematical Sciences, Shanghai Jiao Tong University, 200240 Shanghai, China

\end{center}

\vspace{.1in}

\begin{abstract}
    This paper is devoted to the study of the compressible boundary layer equations in the Gevrey-2 solution space. Compared to the classical Prandtl equation, the additional complexity arises from the strong interaction between viscous layer and thermal layer. By introducing new auxiliary functions and observing the cancellation mechanism to overcome the loss of derivatives, we show the local existence and uniqueness of the solution in the Gevrey-2 space in the tangential variable and Sobolev regularity in the normal variable by using a direct energy method.
\end{abstract}
\textbf{\scriptsize{2020 Mathematics Subject Classification:}} {\scriptsize{35Q35, 35M13, 35B65, 76N20.}}\\
\textbf{\scriptsize{Keywords:}} {\scriptsize{Compressible boundary layer equations, viscous layer and thermal layer, well-posedness in the Gevrey class, energy approach.}}

\tableofcontents

\section{Introduction}
In this paper, we study the following initial-boundary value problem for a non-linear coupled system of two degenerate parabolic equations and an elliptic equation in $\{(t,x,y)|t>0,x\in\R,y>0\}$:
\begin{equation}\label{eq:main}
    \left\{
    \begin{aligned}
        &\partial_t u+u\partial_x u+v\partial_y u=(\theta+\theta^E)\partial_y^2 u,\\
        &\partial_t \theta+u\partial_x \theta+v\partial_y \theta=(\theta+\theta^E)\partial_y^2 \theta+(\theta+\theta^E)(\partial_y u)^2,\\
        &\partial_x u+\partial_y v=\partial_y^2\theta+(\partial_y u)^2,\\
        &u|_{y=0}=v|_{y=0}=\partial_y\theta|_{y=0}=0,\quad \lim\limits_{y\to +\infty}u=\lim\limits_{y\to +\infty}\theta=0,\\
        &(u,\theta)|_{t=0}=(u_0,\theta_0)(x,y),
    \end{aligned}
    \right.
\end{equation}
where $u,v,\theta$ represent the tangential velocity, normal velocity and temperature of fluid respectively, with $\theta^E$ being a positive constant. This system describes the behavior of the viscous layer and thermal layer in the small viscosity and heat conductivity limit for a two-dimensional compressible non-isentropic viscous flow with a nonslip boundary condition for the velocity (\cite{liu2021study}).

To study the behavior and stability of viscous flow near a physical boundary is a classical and challenging problem. Since Prandtl \cite{prandtl1904uber} introduced the boundary layer theory for incompressible viscous flows with nonslip boundary conditions in 1904, there has been much inportant and interesting progress on its mathematical theory.  The well/ill-posedness of the classical two-dimensional Prandtl equation has been well studied in H\"{o}lder spaces or Sobolev spaces with or without monotonicity assumption of the initial tangential velocity in the normal variable, see \cite{oleinik1963}, \cite{oleinik1999mathematical}, \cite{alexandre2015well}, \cite{masmoudi2015local}, \cite{xin2004global}, \cite{xin2024global}, \cite{weinan1997blowup}, \cite{Fei2017}, \cite{gerard2010ill} for details. Meanwhile, without the monotonicity condition, many works have addressed the problem of the Prandtl equations in analytic or Gevrey class spaces due to the inherent degeneracy and derivative loss in the Prandtl equation. The well-posedness of the Prandtl equation in the frame of analytic solutions was studied in \cite{sammartino1998zero}, \cite{lombardo2003well}, \cite{zhang2016long}, \cite{ignatova2016almost}, \cite{paicu2021global}. With the non-degenerate critical point assumption, Gérard-Varet and Masmoudi \cite{gerard2015well} obtained the well-posedness result for a class of data in the Gevrey class of order $\frac{7}{4}$, which was subsequently improved to hold in the order $2$ space without any structural assumption by Dietert and Gérard-Varet \cite{dietert2019well}, Li and Yang \cite{li-yang}, and Li, Masmoudi and Yang \cite{li-mas-yang} in the three-dimensional problem respectively.

In numerous physical scenarios, such as aerodynamics, both of density and temperature play a significant role in the behavior of fluids. Accordingly, it is important to understand how variations in density and temperature influence the development of the boundary layer. However, the study on problems of the boundary layers in compressible flow remains very scarce. We refer to \cite{wang2012inviscid}, \cite{wang2015local}, \cite{gong2016boundary}, \cite{chen2024global} for studies on isentropic compressible flow. When the heat conduction is taken into account, the temperature near the boundary varies sharply due to the conductive heat transfer and viscous dissipation, thereby generating a thermal boundary layer in addition to the classical viscous layer. The stability of such boundary layers in two-dimensional circularly symmetric compressible non-isentropic flows was first studied by Liu and Wang \cite{liu-wang2014}. Recently, Liu, Wang and Yang \cite{liu2021study} derived the boundary layer problems in the small viscosity and  heat conductivity limit simultaneously for two-dimensional compressible non-isentropic flows, subject to the no-slip condition for velocity and a heat flux boundary condition for temperature, under various scales of viscosity and heat conductivity. They further obtained the local existence of classical solutions to the problem \eqref{eq:main} by using an energy method under the monotonicity assumption of the tangential velocity in the normal variable, when the viscosity and heat conductivity are of the same order, in which both of the viscous and thermal layers are strong. More recently, without the monotonicity assumption, the authors in \cite{wang2025compressible} proved the local well-posedness of the problem \eqref{eq:main} in the analytic setting.

The aim of this work is to extend this study to the local well-posedness of the problem \eqref{eq:main} in the Gevrey class of order 2 in the $x$-variable. 
The main result can be stated as the following one.

\begin{theorem}\label{thm:main}
    For fixed positive constants $\delta>0$ and $s>\frac{5}{2}$ , if the initial data $(u_0,\theta_0)$ satisfy
    \begin{equation}
        \begin{aligned}
            &e^{\delta\langle D_x\rangle^{\frac{1}{2}}}u_0\in H^{s+\frac{3}{2},0}_{\Psi_0},\quad e^{\delta\langle D_x\rangle^{\frac{1}{2}}}\theta_0\in H^{s+1,0}_{\Psi_0},\\
            &(e^{\delta\langle D_x\rangle^{\frac{1}{2}}}\partial_y^{j}u_0,~e^{\delta\langle D_x\rangle^{\frac{1}{2}}}\partial_y^{j}\theta_0)\in H^{s+1-j,0}_{\Psi_0}\quad \text{~for~} j=1,2,
        \end{aligned}
    \end{equation}
    and
    \begin{equation}\label{assumption2}
        \|e^{\delta\langle D_x\rangle^{\frac{1}{2}}}\partial_y\theta_0\|_{H^{s,0}_{\Psi_0}}\leq\epsilon
    \end{equation}
    for a fixed small constant $0<\epsilon<\theta^E$, with $\Psi_0(y)=\frac{y^2}{16\theta^E}$, $H^{s,k}_\Psi$ being the weighted Sobolev space defined in Definition \ref{def:norm1}, and $\langle D_x\rangle^\frac{1}{2}$ denoting the Fourier multiplier in the $x$-variable with symbol $(1+\xi^2)^{\frac{1}{4}}$, then there exists a positive time $T>0$, such that the problem \eqref{eq:main} has a unique solution $(u,\theta)$ on $[0,T]$ satisfying
    \begin{equation}
        (e^{\Phi(t,D)}\partial_y^ju,~e^{\Phi(t,D)}\partial_y^j\theta)\in L^\infty_T(H^{s+1-j,0}_{\Psi})
    \end{equation}
    and
    \begin{equation}
        (e^{\Phi(t,D)}\partial_y^{j+1}u,~e^{\Phi(t,D)}\partial_y^{j+1}\theta)\in L^2_T(H^{s+1-j,0}_{\Psi}),
    \end{equation}
    for $j=0,1,2$, where $L_T^\infty(H^{s,k}_\Psi)$ and $L_T^2(H^{s,k}_\Psi)$ are the weighted Sobolev spaces defined in Definition \ref{def:norm2} with $\Psi(t,y)=\frac{y^2}{16\theta^E(1+t)}$, and $e^{\Phi(t,D)}$ is the Fourier integral operator in the $x-$variable with the symbol $\Phi(t,\xi)=(\delta-\gamma\mu(t))\langle\xi\rangle^\frac{1}{2}$ being given in \eqref{eq:Phi}.
\end{theorem}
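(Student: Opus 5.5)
The plan follows the Dietert--Gérard-Varet / Li--Yang energy strategy for Gevrey-2 Prandtl, adapted to the coupled velocity--temperature system. A priori estimates are proved for smooth solutions and then combined with a standard approximation and compactness argument (a parabolic regularization $\varepsilon\partial_x^2$, or an analytic approximation of the data) for existence, plus an $L^2$-type difference estimate for uniqueness.

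\textbf{Reformulation.} Because $\partial_x u+\partial_y v=\partial_y^2\theta+(\partial_y u)^2$, integrating from $y=0$ gives
\[
v=-\int_0^y\partial_x u\,dy'+\partial_y\theta+\int_0^y(\partial_y u)^2dy',
\]
using $\partial_y\theta|_{y=0}=0$. The term $\partial_y\theta$ in $v$ produces $\partial_y\theta\,\partial_y u$ and $\partial_y\theta\,\partial_y\theta$, which lose no $x$-derivative. The genuinely dangerous terms are the Prandtl-type terms $-(\int_0^y\partial_x u)\partial_y u$ and $-(\int_0^y\partial_x u)\partial_y\theta$. Following Dietert--Gérard-Varet, I introduce auxiliary functions solving the linearized $x$-differentiated problem. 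Concretely, let $\psi=\int_0^y u$, and define $g_u=\partial_x u-\frac{\partial_y u}{\ldots}$-type good unknowns, or better, the auxiliary functions $U,\Theta$ solving
\[
\partial_t U-(\theta+\theta^E)\partial_y^2U=\ldots,\qquad \partial_t\Theta-(\theta+\theta^E)\partial_y^2\Theta=\ldots
\]
with $\partial_yU\approx\int_0^y\partial_x u$. The combinations $g_1=\partial_x u-\partial_yu\,U$ and $g_2=\partial_x\theta-\partial_y\theta\,U$ then have equations free of the derivative-losing terms. The coupling through the coefficient $(\theta+\theta^E)$ and through $(\partial_y u)^2$ is why new auxiliary functions are needed: one must also check a cancellation between the $\theta$-equation source $(\theta+\theta^E)(\partial_yu)^2$ and the $(\partial_yu)^2$ term in $v$.

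\textbf{Energy functional.} Set $\Phi(t,\xi)=(\delta-\gamma\mu(t))\langle\xi\rangle^{1/2}$, where $\mu(t)$ is increasing. Differentiating in time, $-\partial_t\Phi=\gamma\mu'\langle\xi\rangle^{1/2}$ gives a gain of half an $x$-derivative in $L^2_T$. The energy is
\[
E(t)=\sum_{j=0}^2\|e^{\Phi}\partial_y^j(u,\theta)\|^2_{H^{s+1-j,0}_\Psi}+\|e^\Phi (U,\Theta,g_1,g_2)\|^2_{\ldots}.
\]
The Gaussian weight $e^{2\Psi}$ with $\Psi=y^2/(16\theta^E(1+t))$ is handled by the fact that $\partial_t\Psi+\frac{2(\theta+\theta^E)}{1}|\partial_y\Psi|^2\le0$ as long as $\theta+\theta^E\in[\theta^E/2,2\theta^E]$. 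This positivity is secured by the smallness \eqref{assumption2}, which gives $|\theta|\le C\epsilon<\theta^E$ for short time. Gevrey-2 suffices because the loss from the remaining terms is at most half a derivative in $x$ times $\langle\xi\rangle^{1/2}$ smoothness in $y$-weighted $L^2$. The commutator and product estimates in $e^{\Phi}$ then reduce to $\langle\xi\rangle^{1/4}$ gains on each factor, absorbed by the $\gamma\mu'\|\langle D\rangle^{1/4}\cdot\|^2$ dissipation.

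\textbf{Closing the estimate.} The goal is
\[
\frac{d}{dt}E+\gamma\mu'\|\langle D\rangle^{1/4}\cdot\|^2+\theta^E\|\partial_y\cdot\|^2\le C(E)\big(1+\|\langle D\rangle^{1/4}\cdot\|^2\big).
\]
Choosing $\gamma$ large and $T$ small then gives $E\le 2E(0)$ with $\Phi>0$ on $[0,T]$.

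\textbf{Main obstacle.} I expect the main difficulty to be constructing the auxiliary functions for the coupled system and verifying the cancellation. The variable diffusion coefficient $(\theta+\theta^E)$ generates commutator terms $\partial_x\theta\,\partial_y^2u$, which also lose an $x$-derivative and must be absorbed into the good unknown. I would first try the ansatz with a single $U$ solving $\partial_tU+u\partial_xU+v\partial_yU-(\theta+\theta^E)\partial_y^2U=\partial_x\theta\cdot(\ldots)+\partial_x u$, with $\partial_yU|_{y=0}=0$, and check the residual order of each term.

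Uniqueness follows from the same estimates applied to the difference of two solutions in a slightly weaker Gevrey norm (smaller $\delta$).
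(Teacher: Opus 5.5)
Your outline has the right scaffolding: a decreasing Gevrey radius $\Phi=(\delta-\gamma\mu(t))\langle\xi\rangle^{1/2}$, the Gaussian weight, good unknowns of the form $\partial_xu-\partial_yu\,W$ and $\partial_x\theta-\partial_y\theta\,W$, a bootstrap on the smallness of $\partial_y\theta$, and $\nu\partial_x^2$ regularization. However, the step you leave as the ``main obstacle'' is the actual content of the theorem, and the ansatz you propose for it would not work. For $g_1=\partial_xu-\partial_yu\,W$ to have an equation free of $\partial_xv\,\partial_yu$, the auxiliary function must satisfy $\mathcal{L}W=-\partial_xv$. In the paper $W=\int_0^y\mathcal{U}\,d\tilde y$, with $\mathcal{L}=\partial_t+T_u\partial_x+T_v\partial_y-T_{\theta+\theta^E}\partial_y^2$. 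An equation with source $\partial_xu$, as in your tentative ansatz, leaves a residual $-\partial_yu(\partial_xv+\partial_xu)$, which still loses a derivative. A single such $W$ serves both $u$ and $\theta$, because the losing terms in the $\partial_xu$- and $\partial_x\theta$-equations are $\partial_xv\,\partial_yu$ and $\partial_xv\,\partial_y\theta$, which share the factor $\partial_xv$. Conversely, $\partial_x\theta\,\partial_y^2u$ need not enter any good unknown. It is estimated directly once $\theta$ is carried at the same level $H^{s+1}$ as $u$, via $\|\theta_\Phi\|_{L^2_{t,\dot\mu}(H^{s+\frac54,0}_\Psi)}$ and, after an integration by parts in $y$, the smallness of $\partial_y\theta$.

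The more serious omission is the mechanism that makes Gevrey~2 close. Your claim that after the change of unknowns every loss is at most half a derivative is false at equal regularity. Differentiating $\mathcal{L}(\int_0^y\mathcal{U})=-\partial_xv$ in $y$ and using $\partial_xu+\partial_yv=\partial_y^2\theta+(\partial_yu)^2$ gives $\mathcal{L}(\mathcal{U})=\partial_x\lambda-\partial_x\partial_y^2\theta+\dots$, which is a full $x$-derivative loss. This is closed only by staggering the regularity: $\mathcal{U}$ in $H^{s,0}_\Psi$, $\lambda$ in $H^{s+\frac12,0}_\Psi$, and $u,\theta$ in $H^{s+1,0}_\Psi$ (which is why $u_0\in H^{s+\frac32}$ is assumed). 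Then $|\langle\partial_x\lambda_\Phi,\mathcal{U}_\Phi\rangle_{H^{s,0}_\Psi}|\le\|\lambda_\Phi\|_{H^{s+\frac34,0}_\Psi}\|\mathcal{U}_\Phi\|_{H^{s+\frac14,0}_\Psi}$ is absorbed by the $\gamma\dot\mu$ terms, and $u$ is recovered from $\partial_xu=\lambda+T_{\partial_yu}\int_0^y\mathcal{U}\,d\tilde y$. Your energy leaves exactly these indices as ``$\ldots$''.

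Moreover, the term $-\langle\partial_x\partial_y^2\theta_\Phi,\mathcal{U}_\Phi\rangle_{H^{s,0}_\Psi}$ only yields $\frac{5}{2\theta^E}\|\partial_y\theta_\Phi\|^2_{H^{s+1,0}_\Psi}$ plus absorbable terms. This is an $O(1)$ multiple of the $\theta$-dissipation and does not fit your closing inequality of the form $C(E)(1+\|\langle D\rangle^{1/4}\cdot\|^2)$. The paper absorbs it by weighting the $\theta$-energy with a large constant $k$. Recovering $\partial_x\theta$ from $\varphi=\partial_x\theta-T_{\partial_y\theta}\int_0^y\mathcal{U}\,d\tilde y$ then returns a term of size $C(1+t)^{\frac12}\zeta^2k\,\frac{5}{\theta^E}\|\partial_y\theta_\Phi\|^2_{L^2_t(H^{s+1,0}_\Psi)}$. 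So the smallness of $\|e^{\delta\langle D_x\rangle^{1/2}}\partial_y\theta_0\|_{H^{s,0}_{\Psi_0}}$ is what makes such a $k$ admissible, not merely what keeps $\theta+\theta^E$ positive. Without these two ingredients your differential inequality cannot be established.
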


\begin{remark}
It is hard to extend the approach used in \cite{wang2025compressible} on the local well-posedness of the problem \eqref{eq:main} in the analytic setting to treat this problem in the Gevrey class $G^2$ directly. Indeed, by employing the energy method of \cite{wang2025compressible} for the problem \eqref{eq:main}, it gives that
$$
    \frac{1}{2}\frac{d}{dt}\|u_\Phi(t)\|_{H^{s,0}_\Psi}^2+\gamma\dot{\mu}(t)\|u_\Phi\|_{H^{s+\frac{1}{4},0}_\Psi}^2+\frac{1}{2}\theta^E\|\partial_yu_\Phi\|_{H^{s,0}_\Psi}^2\leq C\dot{\mu}(t)\|u_\Phi\|_{H^{s+\frac{1}{2},0}_\Psi}^2+\cdots,
$$
with $u_\Phi=\mathcal{F}^{-1}(e^{(\delta-\gamma\mu(t))\langle\xi\rangle^\frac{1}{2}} \hat{u})$, from which one could not close the estimate of $u$ in the Gevrey class $G^2$. On the other hand, due to the additional nonlinearity and non-locality in the system \eqref{eq:main}, we note that mainly there are two challenging issues  in proving the  well-posedness of the problem \eqref{eq:main} in the Gevrey class in contrast with the problem of the classical Prandtl equation when applying the existing techniques:
\begin{enumerate}[label=(\Roman*)]
    \item Because in \eqref{eq:main} the velocity is not divergence-free, it precludes the existence of nontrivial shear flow. This presents a critical obstacle to construct the required auxiliary functions, which are essential in performing the analysis of the linearized system around such a shear flow, a key point in the energy method utilized for the classical Prandtl equation, as given in \cite{gerard2015well}.
    \item The system \eqref{eq:main} exhibits strong, nonlinear coupling between the velocity and temperature equations, driven by both the divergence condition and nonlinear diffusion terms. Hence for the system to retain its parabolicity in $y$, one must ensure a small upper bound for $\theta$. Unfortunately, the cancellation mechanism successfully employed in \cite{li-mas-yang} fails to provide an explicit energy bound for $\theta$ without introducing supplementary auxiliary functions, which thereby significantly complicates closing the a priori estimates in the Gevrey class.
\end{enumerate}
\end{remark}
The difficulty (I) is somewhat arisen from the structure of the system \eqref{eq:main}. Nevertheless, one may solve this technical problem by introducing a cancellation mechanism to overcome the loss of derivatives rather than analysing the linearized problem. However, the difficulty (II) is the major one for the boundary layer equations \eqref{eq:main}. Inspired by \cite{li-mas-yang}, we define the auxiliary function $\mathcal{U}$ by
\begin{equation}\label{eq:intmathcalU}
\begin{cases}
    \mathcal{L}\left(\int_0^y\mathcal{U}(t,x,\tilde{y})d\tilde{y}\right)=-\partial_x v(t,x,y),\\
    \mathcal{U}|_{t=0}=0,\quad \partial_y\mathcal{U}|_{y=0}=\lim\limits_{y\to\infty}\mathcal{U}=0
\end{cases}
\end{equation}
with
\begin{equation}   \mathcal{L}\triangleq\partial_t+T_u\partial_x+T_v\partial_y-T_{(\theta+\theta^E)}\partial_y^2
\end{equation}
being the Prandtl-type operator, and $\lambda$ by 
\begin{equation}\label{def:lambda}
    \lambda\triangleq\partial_x u-T_{\partial_y u}\int_0^y\mathcal{U}d\tilde{y}.
\end{equation}
Moreover, to treat $\theta$ at the same order of regularity as $u$, we introduce the following auxiliary function $\varphi$:
\begin{equation}\label{def:varphi}
    \varphi\triangleq\partial_x\theta-T_{\partial_y \theta}\int_0^y\mathcal{U}d\tilde{y},
\end{equation}
which allows us to exploit a cancellation mechanism for both the terms $v\partial_yu$ and $v\partial_y\theta$ in \eqref{eq:main}. By a direct computation, we can deduce the following equations of $\mathcal{U},\lambda$ and $\varphi$ from \eqref{eq:main}:
\begin{equation}\label{eq:newsystem}
    \begin{cases}
        \mathcal{L}(\mathcal{U})=\partial_x\lambda+l.o.t,\\
        \mathcal{L}(\lambda)=l.o.t,\\
        \mathcal{L}(\varphi)=l.o.t,
    \end{cases}
\end{equation}
where $l.o.t$ denotes lower-order terms. If one takes the $H^{s,0}_\Psi$ inner product of the first equation of \eqref{eq:newsystem} with $\mathcal{U}$, one could view that $\partial_x\lambda$ behaves like $\partial_x^\frac{1}{2}\mathcal{U}$, thus, in Propositions \ref{prop:lambda}, we shall take the $H^{s+\frac{1}{2},0}_\Psi$ inner product of the second equation of \eqref{eq:newsystem} with $\lambda$, to close their estimates. Instead of using the abstract Cauchy-Kowalewski theorem as in \cite{li-mas-yang}, we shall derive their energy estimates directly and recover the estimates of $u$ and $\theta$ by \eqref{def:lambda} and \eqref{def:varphi}.

The remainder of this paper is organized as follows. In Section 2, we recall some basic facts of the Littlewood-Paley decomposition and define the function spaces which will be used later. Then, we prove a series of a priori estimates in Section 3. Finally, we give the proof of Theorem \ref{thm:main} in Section 4. 

Before the end of this introduction, we give two notations to be used later. For $a\lesssim b$, we mean there is a positive constant $C$, which may be different from line to line but independent of $\epsilon$, such that $a\leq Cb$ holds, and  $\langle f,g\rangle_{H^{s,0}_\Psi}\triangleq\text{Re}\int_{\R_+}\!\!\int_{\R}e^{2\Psi}\langle D_x\rangle^s f\overline{\langle D_x\rangle^s g}dxdy$ stands for the inner product of $f$ and $g$ in $H^{s,0}_\Psi(\R\times\R_+)$.

\section{Preliminaries}

\subsection{Littlewood-Paley decomposition}

We shall frequently use the Littlewood-Paley decomposition in the tangential variable $x$ for the product of functions. First, recall some notations from \cite{bahouri2011fourier}. Let 
 $\chi(\xi)$, $\varphi(\xi)$ be two fixed smooth functions such that
$$\begin{aligned}
    &\mathrm{supp}\ \varphi\subset\left\{\xi\in\mathbb{R}\big|~\frac{3}{4}\leqslant|\xi|\leqslant\frac{8}{3}\right\}&\mathrm{and}&\quad\sum_{k\in\mathbb{Z}}\varphi(2^{-k}\xi)=1\quad  (\forall\xi>0),\\
    &\mathrm{supp}\ \chi\subset\left\{\xi\in\mathbb{R}\big|~|\xi|\leqslant\frac{4}{3}\right\}&\mathrm{and}&\quad\chi(\xi)+\sum_{k\geq0}\varphi(2^{-k}\xi)=1\quad (\forall\xi\ge 0).
\end{aligned}$$
For any given Schwartz distribution $a\in {\mathcal S}'(\R_+^2)$, denote by  
$$\begin{aligned}
&\Delta_ka=0\quad\mathrm{if}\quad k\leq -2,\quad\Delta_{-1}a=\mathcal{F}^{-1}(\chi(|\xi|)\hat{a}),\quad\Delta_ka=\mathcal{F}^{-1}(\varphi(2^{-k}|\xi|)\hat{a})\quad\mathrm{if}\quad k\geqslant 0,\\&\mathrm{and}\quad S_ka=\mathcal{F}^{-1}(\chi(2^{-k}|\xi|)\hat{a}), \end{aligned}$$
where $\hat{a}$ and $\mathcal{F}^{-1}(a)$ denote the partial Fourier transform and Fourier inverse transform of $a$ with respect to the $x$ variable, that is, $\hat{a}(\xi,y)=\mathcal{F}_{x\rightarrow\xi}(a)(\xi,y)$.
The following Bony's decomposition shall be used frequently,
\begin{equation}\label{eq:bony}
    fg=T_fg+T_gf+R(f,g),
\end{equation}
where 
$$T_fg=\sum_kS_{k-1}f\Delta_kg,\quad R(f,g)=\sum_k\sum\limits_{k'=k-1}^{k+1}\Delta_{k'}f\Delta_kg.$$

\subsection{Functional spaces}

Next, we introduce several Sobolev spaces, which are similar to those introduced in \cite{wang2024global}.

\begin{definition}\label{def:norm1}
    For $s\in\R,k\in\N$, we define the norms in anisotropic Sobolev spaces $H^{s,k}(\R\times\R_+)$ as
    \begin{equation}
        \|u\|_{H^{s,k}}\triangleq\sum\limits_{l=0}^{k}\left(\int_{\R_+}\!\!\int_{\R}\langle\xi\rangle^{2s}|\partial_y^l\hat{u}(\xi,y)|^2d\xi dy\right)^{\frac{1}{2}}
    \end{equation}
    and the norms in weighted anisotropic Sobolev spaces $H^{s,k}_\Psi(\R\times\R_+)$ as
    \begin{equation}
        \|u\|_{H^{s,k}_\Psi}\triangleq\sum\limits_{l=0}^{k}\left(\int_{\R_+}\!\!\int_{\R}e^{2\Psi(t,y)}\langle\xi\rangle^{2s}|\partial_y^l\hat{u}(\xi,y)|^2d\xi dy\right)^{\frac{1}{2}}
    \end{equation}
    for a given weighted function $\Psi(t,y)$.
\end{definition}

\begin{definition}\label{def:norm2}
    For fixed $p\in[1,\infty]$ and $t>0$, define $L^p_t(H^{s,k}_\Psi)$ the space of functions with the norm being finite for $1\leq p<\infty$,
    \begin{equation}
        \|u\|_{L^p_t(H^{s,k}_\Psi)}\triangleq\left(\int_0^t\|u(t',\cdot)\|_{H^{s,k}_\Psi}^p dt'\right)^{\frac{1}{p}}
    \end{equation}
    with the usual modification when $p=\infty$. Moreover, for a given non-negative function $f(t)\in L^1_{\rm loc}(\R_+)$, define
    \begin{equation}
        \|u\|_{L^p_{t,f}(H^{s,k}_\Psi)}\triangleq\left(\int_0^tf(t')\|u(t',\cdot)\|_{H^{s,k}_\Psi}^p dt'\right)^{\frac{1}{p}}.
    \end{equation}
    with the usual modification when $p=\infty$.
\end{definition}

In the following, we shall fix the weighted function $\Psi(t,y)$ as
\begin{equation}\label{eq:weight}
    \Psi(t,y)=\frac{y^2}{16\theta^E(1+t)},
\end{equation}
and $\Psi_0(y)\triangleq\Psi(0,y)=\frac{y^2}{16\theta^E}$. Obviously, $\Psi(t,y)$ satisfies
\begin{equation}\label{eq:weightpro}
    \partial_t\Psi+4\theta^E(\partial_y\Psi)^2=0.
\end{equation}

To control the terms consisting of $\partial_y\Psi$ resulting from integration by parts, we shall use the following lemma, which is a special case of \cite[Lemma 2.5]{wang2024global}:
\begin{lemma}\label{lem:Psi}
    Let $u(x,y)$ be a smooth function on $\R_+^2$ which decays to zero sufficiently fast as $y$ goes to $+\infty$. Then, one has 
    \begin{equation}\label{ineq:Psi}
        \int_{\mathbb{R}_+^2}|u(x,y)\partial_y\Psi|^2e^{2\Psi}dxdy\leq C\int_{\mathbb{R}_+^2}|\partial_y u(x,y)|^2e^{2\Psi}dxdy,
    \end{equation}
    where $C$ is a constant depending only on $\theta^E$.
\end{lemma}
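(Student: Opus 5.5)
The statement to prove is Lemma \ref{lem:Psi}, a weighted Hardy-type inequality. I would reduce it to a one-dimensional estimate in $y$ for each fixed $x$, and then integrate in $x$. Fix $x$ (and $t$), write $f(y)=u(x,y)$, and note that $\partial_y\Psi=\frac{y}{8\theta^E(1+t)}\triangleq ay$ with $0<a\le\frac{1}{8\theta^E}$. Also $e^{2\Psi}=e^{ay^2}$. So the claim becomes
\[
\int_0^\infty a^2y^2|f|^2e^{ay^2}\,dy\le C\int_0^\infty|f'|^2e^{ay^2}\,dy .
\]
This should hold with a constant uniform in $a\in(0,\frac{1}{8\theta^E}]$. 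Since only $\partial_y u$ appears on the right, no boundary value of $u$ at $y=0$ should be needed, and the integration by parts must be arranged so that the boundary term at $y=0$ has a good sign or vanishes.

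The key step is the substitution $g=fe^{ay^2/2}$. Then $f'e^{ay^2/2}=g'-ayg$, so
\[
\int|f'|^2e^{ay^2}=\int|g'|^2-2a\int y\,\mathrm{Re}(g\bar g')+a^2\int y^2|g|^2 .
\]
For the middle term, $2\,\mathrm{Re}(g\bar g')=(|g|^2)'$. Integrating by parts gives
\[
-a\int_0^\infty y(|g|^2)'\,dy=a\int_0^\infty|g|^2\,dy ,
\]
with boundary terms $[y|g|^2]_0^\infty$. This vanishes at $0$ because of the factor $y$, and at $\infty$ by the assumed fast decay, which I interpret as $u e^{\Psi}$ decaying, i.e.\ finiteness of the relevant weighted norms. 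Hence
\[
\int|f'|^2e^{ay^2}=\int|g'|^2+a\int|g|^2+a^2\int y^2|g|^2\ \ge\ a^2\int y^2|f|^2e^{ay^2}.
\]
This gives the inequality with $C=1$. Integrating over $x\in\R$ then yields \eqref{ineq:Psi}, and the constant is trivially independent of $t$ and $\theta^E$; a generic $C$ depending on $\theta^E$ is allowed anyway.

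The main point requiring care is justifying the integration by parts at $y=+\infty$. If the right-hand side is finite but one has no a priori decay of $y|g|^2$, I would first prove the inequality for $f$ multiplied by a smooth cutoff $\chi(y/R)$, or for $C_c^\infty$ approximants in the weighted $H^1$ norm. I would then let $R\to\infty$ using monotone convergence on the left and dominated convergence on the right. The cutoff error term is $\frac{1}{R}\chi'(y/R)f$, and it is controlled by $\int_{R\le y\le 2R}|f|^2e^{ay^2}/R^2$. This is bounded by the left-hand quantity restricted to $[R,2R]$ times $R^{-4}a^{-2}$, so it is absorbable as $R\to\infty$. The hypothesis that $u$ decays sufficiently fast lets me skip this step and take the boundary term to be zero directly.
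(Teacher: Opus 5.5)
Your proof is correct. The paper gives no argument of its own for this lemma; it only quotes it as a special case of \cite[Lemma 2.5]{wang2024global}, so there is no in-text proof to compare against.

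Your conjugation $g=fe^{ay^2/2}$ is equivalent to the usual integration by parts against $\partial_y(e^{2\Psi})=2\partial_y\Psi\,e^{2\Psi}$. Both use exactly the two facts you exploit: $\partial_y\Psi|_{y=0}=0$ and $\partial_y^2\Psi\ge 0$. Your version gives the exact identity
\[
\int_0^N|f'|^2e^{ay^2}\,dy=\int_0^N|g'|^2\,dy+a\int_0^N|g|^2\,dy+a^2\int_0^N y^2|g|^2\,dy-aN|g(N)|^2 .
\]
This yields $C=1$ uniformly in $t$, which matters because $\Psi$ depends on $t$ and the lemma is used for all $t<T^*$. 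Taking $\mathrm{Re}$ also covers the complex-valued functions $\langle D_x\rangle^s u_\Phi$ to which the paper actually applies the lemma.

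Your last paragraph needs one correction. The cutoff argument is incomplete as written. Making the error term $\frac{C}{aR^2}\|ay\,fe^{ay^2/2}\|_{L^2(R,2R)}$ vanish requires $\int_R^{2R}y^2|f|^2e^{ay^2}\,dy=o(R^4)$, and you have not derived this from finiteness of the right-hand side.

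You also need no reading of ``decays sufficiently fast'' beyond $f\to0$ as $y\to\infty$. Suppose the right-hand side is finite. Apply the Cauchy--Schwarz step from the proof of Lemma \ref{lem:Poincare}, together with $\int_y^\infty e^{-a\tilde y^2}d\tilde y\le \frac{e^{-ay^2}}{2ay}$. This gives
\[
y|g(y)|^2=y|f(y)|^2e^{ay^2}\le \frac{1}{2a}\int_y^\infty |f'|^2e^{a\tilde y^2}\,d\tilde y\longrightarrow 0 \quad (y\to\infty).
\]
Hence the boundary term $aN|g(N)|^2$ vanishes. Letting $N\to\infty$ by monotone convergence then completes the proof, with no cutoff needed.
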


Furthermore, we have that 
\begin{lemma}\label{lem:Poincare}
    Under the same assumption as given in Lemma \ref{lem:Psi}, for any $s\in\R$, one has
    \begin{equation}\label{ineq:Poincare}
        \|u(t)\|_{L_y^{\infty}(H^s_x)}\leq (2\pi\theta^E)^\frac{1}{4}(1+t)^{\frac{1}{4}}\|\partial_yu\|_{H^{s,0}_\Psi}.
    \end{equation}
\end{lemma}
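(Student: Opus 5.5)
The plan is to prove the bound frequency by frequency, using the decay of $u$ at $y=+\infty$ together with a Cauchy--Schwarz inequality in $y$ against the Gaussian weight $e^{-2\Psi}$. Then we integrate in $\xi$. Since the operator $\langle D_x\rangle^s$ commutes with $\partial_y$ and acts only through the partial Fourier transform in $x$, everything reduces to a one-dimensional estimate in $y$ for each fixed $\xi$.

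\textbf{Step 1 (pointwise bound in $y$).} Fix $t$ and $\xi$. Since $u$, and hence $\hat u(\xi,\cdot)$, decays to zero as $y\to+\infty$, we write
\[
\hat u(\xi,y)=-\int_y^{+\infty}\partial_y\hat u(\xi,\tilde y)\,d\tilde y .
\]
Inserting $e^{\Psi}e^{-\Psi}$ and applying Cauchy--Schwarz gives
\[
|\hat u(\xi,y)|^2\le \Big(\int_0^{+\infty}e^{-2\Psi(t,\tilde y)}d\tilde y\Big)\int_0^{+\infty}e^{2\Psi(t,\tilde y)}|\partial_y\hat u(\xi,\tilde y)|^2d\tilde y ,
\]
uniformly in $y\ge0$.

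\textbf{Step 2 (the Gaussian integral).} With $\Psi$ as in \eqref{eq:weight} we have $2\Psi=\frac{y^2}{8\theta^E(1+t)}$. Hence
\[
\int_0^{+\infty}e^{-\frac{y^2}{8\theta^E(1+t)}}dy=\tfrac12\sqrt{8\pi\theta^E(1+t)}=\sqrt{2\pi\theta^E(1+t)} .
\]
This is exactly the square of the constant $(2\pi\theta^E)^{\frac14}(1+t)^{\frac14}$ in \eqref{ineq:Poincare}.

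\textbf{Step 3 (integration in $\xi$).} Multiply the bound of Step 1 by $\langle\xi\rangle^{2s}$ and integrate in $\xi$. The right-hand side no longer depends on $y$, so taking the supremum over $y$ yields
\[
\sup_{y}\int_{\R}\langle\xi\rangle^{2s}|\hat u(\xi,y)|^2d\xi\le\sqrt{2\pi\theta^E(1+t)}\int_{\R_+}\!\!\int_{\R}e^{2\Psi}\langle\xi\rangle^{2s}|\partial_y\hat u|^2d\xi dy .
\]
By Definition \ref{def:norm1} (with the Plancherel normalization used there), the left side is $\|u(t)\|^2_{L^\infty_y(H^s_x)}$ and the right side is $\sqrt{2\pi\theta^E(1+t)}\,\|\partial_yu\|^2_{H^{s,0}_\Psi}$. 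Taking square roots gives \eqref{ineq:Poincare}.

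There is no real obstacle here. The only points needing care are justifying the representation of $\hat u$ as a tail integral, which uses the assumed decay at infinity and density of smooth decaying functions, and keeping the Fourier normalization consistent so that the constant comes out exactly as stated.
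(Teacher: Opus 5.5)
Your proof is correct and follows the paper's argument. You write $u$ as the tail integral $-\int_y^\infty\partial_y u\,d\tilde y$, apply Cauchy--Schwarz against $e^{-\Psi}e^{\Psi}$, and evaluate $\int_0^\infty e^{-2\Psi}\,d\tilde y=\sqrt{2\pi\theta^E(1+t)}$. The one cosmetic difference is that the paper applies H\"older directly to the $H^s_x$-valued integral, which implicitly uses Minkowski's integral inequality, whereas you apply Cauchy--Schwarz frequency by frequency and then integrate in $\xi$; this avoids that step and gives the same constant.
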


\begin{proof}
    It follows from $u(t,x,y)=-\int_y^\infty\partial_yu(t,x,\tilde{y}) d\tilde{y}$ and Holder's inequality that
    \begin{equation}
        \|u(t,\cdot, y)\|_{H^s_x}= \|\int_y^\infty\partial_yu d\tilde{y}\|_{H^s_x}\leq \left(\int_y^\infty e^{-2\Psi}d\tilde{y}\right)^{\frac{1}{2}}\left(\int_y^\infty e^{2\Psi}\|\partial_yu\|_{H^s_x}^2 d\tilde{y}\right)^{\frac{1}{2}},
    \end{equation}
   which implies
    \begin{equation}
        \|u(t)\|_{L_y^{\infty}(H^s_x)}=\sup\limits_{y>0}\|u(t)\|_{H^s_x}\leq (2\pi\theta^E)^\frac{1}{4}(1+t)^{\frac{1}{4}}\|\partial_yu\|_{H^{s,0}_\Psi}.
    \end{equation}
\end{proof}

Denote by
\begin{equation}
u_\Phi(t,x,y)\triangleq\mathcal{F}^{-1}_{\xi\rightarrow x}(e^{\Phi(t,\xi)}\hat{u}(t,\xi,y)),
\end{equation}
where the phase function $\Phi$ is defined by
\begin{equation}\label{eq:Phi}
    \Phi(t,\xi)\triangleq(\delta-\gamma\mu(t))\langle\xi\rangle^\frac{1}{2},
\end{equation}
with $\langle\xi\rangle\triangleq(1+\xi^2)^{\frac{1}{2}}$ and $\gamma$ being a sufficiently large constant, to overcome the difficulty of loss of derivatives in \eqref{eq:main}, inspired by the idea given in \cite{zhang2016long}.

The key quantity $\mu(t)$ for describing the evolution of the Gevrey radius of $u,\theta$ is determined by

\begin{equation}\label{eq:mu}
    \left\{
    \begin{aligned}
        &\dot{\mu}=1+(1+t)^\frac{1}{4}(\|(\partial_yu_\Phi,\partial_y\theta_\Phi)\|_{H^{\frac{5}{2}+,0}_\Psi}+\|(\partial_y^2u_\Phi,\partial_y^2\theta_\Phi)\|_{H^{\frac{3}{2}+,1}_\Psi})\\
        &\quad\ \ +(1+t)^\frac{1}{2}(\|u_\Phi\|_{H^{\frac{5}{2}+,0}_\Psi}^2+\|\theta_\Phi\|_{H^{\frac{3}{2}+,0}_\Psi}^2+\|(\partial_yu_\Phi,\partial_y\theta_\Phi)\|_{H^{\frac{5}{2}+,0}_\Psi}^2
        +\|(\partial_y^2u_\Phi,\partial_y^2\theta_\Phi)\|_{H^{\frac{3}{2}+,0}_\Psi}^2)\\
        &\quad\ \ +\|\theta_\Phi\|_{H^{\frac{1}{2}+,0}_\Psi}^4+(1+t)\|(\partial_yu_\Phi,\partial_y\theta_\Phi)\|_{H^{\frac{3}{2}+,1}_\Psi}^4\\
        &\quad\ \ +(1+t)^\frac{1}{2}\|\partial_y\theta_\Phi\|_{H^{\frac{3}{2}+,0}_\Psi}(\|\partial_y^3u_\Phi\|_{H^{\frac{3}{2}+,0}_\Psi}+\|\partial_y^3\theta_\Phi\|_{H^{\frac{3}{2}+,0}_\Psi})\\
        &\quad\ \ +\|\partial_y^3u_\Phi\|_{H^{\frac{1}{2}+,0}_\Psi}(\|\partial_yu_\Phi\|_{H^{\frac{1}{2}+,0}_\Psi}+\|\partial_y^2u_\Phi\|_{H^{\frac{1}{2}+,0}_\Psi}+\|\theta_\Phi\|_{H^{\frac{1}{2}+,0}_\Psi})+\|\partial_y^3\theta_\Phi\|_{H^{\frac{1}{2}+,0}_\Psi}\|\partial_yu_\Phi\|_{H^{\frac{1}{2}+,0}_\Psi},\\
        &\mu|_{t=0}=0,
    \end{aligned}
\right.\end{equation}
where we denote $\sigma+$ to be a constant slightly larger than $\sigma$.

In what follows, we shall always assume that $T^*$ is determined by
\begin{equation}\label{def:T*}
    T^*\triangleq\sup\{t>0|~\mu( t)<\delta/\gamma\}.
\end{equation}
Obviously, when  $0<t<T^*$, there holds the following convex inequality
\begin{equation}
    \Phi(t,\xi)\leqslant\Phi(t,\xi-\eta)+\Phi(t,\eta),\quad \ \forall\xi,\eta\in\R.
\end{equation}

\subsection{Para-product related estimates}

Along the same line as the proof of the classical para-product laws given in \cite{bahouri2011fourier}, corresponding to the cases of $\Phi(t,\xi)=0$, we can derive the following estimates:

\begin{lemma}\label{lem:paraestimate}
    For any fixed $s\in\R$, and $\sigma>\frac{1}{2}$, we have
    \begin{equation}
        \begin{aligned}
        &\|(T_f g)_\Phi\|_{H^{s}_x}\leq C\|f_\Phi\|_{H^{\sigma}_x}\|g_\Phi\|_{H^{s}_x},\\
        &\|((T_f)^*g)_\Phi\|_{H^{s}_x}\leq C\|f_\Phi\|_{H^{\sigma}_x}\|g_\Phi\|_{H^{s}_x}.
    \end{aligned}
    \end{equation}
    If $s_1+s_2>s+\frac{1}{2}>0$, then there holds
    \begin{equation}
        \|(R(f,g))_\Phi\|_{H^{s}_x}\leq C\|f_\Phi\|_{H^{s_1}_x}\|g_\Phi\|_{H^{s_2}_x}.
    \end{equation}
\end{lemma}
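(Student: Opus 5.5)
The estimates follow the classical $\Phi=0$ proofs in \cite{bahouri2011fourier}. The only new ingredient is the convexity (subadditivity) inequality $\Phi(t,\xi)\le\Phi(t,\xi-\eta)+\Phi(t,\eta)$, valid for $0<t<T^*$. This inequality lets us move the exponential weight from the product onto the factors, at least when working with absolute values on the Fourier side. Since the weight is applied only in $x$, everything is done at fixed $y$ (and $t$), using only the Fourier variable $\xi$.

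\textbf{Reduction to nonnegative Fourier transforms.} For the paraproduct, the Fourier transform of $\Delta_k(S_{k'-1}f\,\Delta_{k'}g)$ is a convolution restricted to frequencies $|\xi|\sim 2^{k}$ with $|k-k'|\le 4$. Multiplying by $e^{\Phi(t,\xi)}$ and applying subadditivity inside the convolution gives the pointwise bound
$$e^{\Phi(\xi)}|\widehat{T_fg}(\xi)|\le \sum_{k'}\int \chi(2^{-k'+1}|\xi-\eta|)e^{\Phi(\xi-\eta)}|\hat f(\xi-\eta)|\,\varphi(2^{-k'}|\eta|)e^{\Phi(\eta)}|\hat g(\eta)|\,d\eta .$$
Set $F=\mathcal{F}^{-1}(e^{\Phi}|\hat f|)$ and $G=\mathcal{F}^{-1}(e^{\Phi}|\hat g|)$. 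The right-hand side is then (up to the smooth cut-offs, which are nonnegative) the Fourier transform of $T_{F}G$. Moreover $\|F\|_{H^\sigma_x}=\|f_\Phi\|_{H^\sigma_x}$ and $\|G\|_{H^s_x}=\|g_\Phi\|_{H^s_x}$.

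\textbf{Paraproduct bound.} It remains to prove the standard bound with the cut-offs kept in absolute value. By Bernstein/Young in frequency, $\|S_{k'-1}F\|_{L^\infty}\lesssim \|\hat F\|_{L^1}\lesssim \|F\|_{H^\sigma}$ because $\sigma>\frac12$, uniformly in $k'$. Each dyadic piece then satisfies $\|\Delta_k(\cdots)\|_{L^2}\lesssim \|F\|_{H^\sigma}\|\Delta_{k'}G\|_{L^2}$. Summing with weights $2^{2ks}$, using almost orthogonality ($|k-k'|\le4$), gives the $H^s$ bound. I would carry this out directly on the Fourier side, writing $\|\cdot\|_{L^\infty}\le\|\hat\cdot\|_{L^1}$ so that no sign issues arise. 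The adjoint $(T_f)^*g=\sum_k \Delta_k(\overline{S_{k-1}f}\,g)$ (up to the symmetric cut-offs) is handled identically. Its Fourier transform is again a convolution localized to $|\xi|\sim2^k$, and subadditivity applies verbatim.

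\textbf{Remainder bound.} For $R(f,g)$, the same subadditivity step reduces the claim to bounding $R(F,G)$. The classical argument applies: $\Delta_j R(F,G)=\sum_{k\ge j-3}\Delta_j(\Delta_{k'}F\Delta_kG)$. Using $\|\Delta_j(ab)\|_{L^2}\lesssim 2^{j/2}\|ab\|_{L^1}$ (Bernstein in one dimension) gives
$$2^{js}\|\Delta_jR\|_{L^2}\lesssim\sum_{k\ge j-3}2^{(j-k)(s+\frac12)}2^{k(s_1+s_2)}\|\Delta_{k'}F\|_{L^2}\|\Delta_kG\|_{L^2}\,2^{-k(s_1+s_2-s-\frac12)}.$$
The condition $s+\frac12>0$ makes the sum over $k\ge j-3$ a convergent discrete convolution. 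The condition $s_1+s_2>s+\frac12$ leaves a summable factor $2^{-k(s_1+s_2-s-\frac12)}$. An $\ell^2$ Young inequality then finishes the bound.

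\textbf{Main obstacle.} The delicate point is that subadditivity only controls $|\widehat{(\cdot)}|$, not the functions themselves. All steps must therefore use Fourier-side $L^1/L^2$ bounds rather than physical-space Hölder inequalities applied to the signed products. In particular, the $L^\infty$ bound for $S_{k-1}F$ and the Bernstein step in the remainder must both be phrased as $\|\hat\cdot\|_{L^1}$ estimates. With that convention every inequality is monotone in $|\hat f|,|\hat g|$, and the lemma follows.
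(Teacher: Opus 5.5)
Your proposal is correct and is essentially the argument the paper has in mind when it says these estimates follow ``along the same line'' as the classical $\Phi=0$ proofs. The subadditivity $\Phi(t,\xi)\le\Phi(t,\xi-\eta)+\Phi(t,\eta)$ for $t<T^*$ dominates $e^{\Phi}|\widehat{T_fg}|$, $e^{\Phi}|\widehat{(T_f)^*g}|$ and $e^{\Phi}|\widehat{R(f,g)}|$ pointwise by the corresponding expressions in $F=\mathcal{F}^{-1}(e^{\Phi}|\hat f|)$ and $G=\mathcal{F}^{-1}(e^{\Phi}|\hat g|)$, after which the Bahouri--Chemin--Danchin bounds apply; once that domination is in place the classical estimates for $T_FG$ and $R(F,G)$ can be used verbatim, so the extra caution in your last paragraph is harmless but not needed.
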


\begin{lemma}\label{lem:commutator}
    For any fixed $s>0$ and $\sigma>\frac{3}{2}$, one has
    \begin{equation}
        \begin{aligned}
        &\|((T_a T_b-T_{ab})f)_\Phi\|_{H^{s}_x}\leq C\|a_\Phi\|_{H^{\sigma}_x}\|b_\Phi\|_{H^{\sigma}_x}\|f_\Phi\|_{H^{s-1}_x},\\
        &
        \|([\langle D_x\rangle^s;T_a]f)_\Phi\|_{L^2_x}\leq C\|a_\Phi\|_{H^{\sigma}_x}\|f_\Phi\|_{H^{s-1}_x},
        \\
        &\|((T_a-(T_a)^*)f)_\Phi\|_{H^{s}_x}\leq C\|a_\Phi\|_{H^{\sigma}_x}\|f_\Phi\|_{H^{s-1}_x},\\
        &\|([T_a;T_b]f)_\Phi\|_{H^{s}_x}\leq C\|a_\Phi\|_{H^{\sigma}_x}\|b_\Phi\|_{H^{\sigma}_x}\|f_\Phi\|_{H^{s-1}_x},
    \end{aligned}
    \end{equation}
    where $\langle D_x\rangle^s$ is the Fourier multiplier operator with symbol $(1+|\xi|^2)^{\frac{s}{2}}$.
\end{lemma}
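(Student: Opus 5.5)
The four bounds follow the same route as the classical para-differential calculus in \cite{bahouri2011fourier}. The only new input is the sub-additivity $\Phi(t,\xi)\le\Phi(t,\xi-\eta)+\Phi(t,\eta)$ for $0<t<T^*$. This holds because $\delta-\gamma\mu(t)>0$ and $\langle\xi\rangle^{1/2}\le\langle\xi-\eta\rangle^{1/2}+\langle\eta\rangle^{1/2}$.

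The plan is to move the weight $e^{\Phi}$ onto each factor and then dominate every symbol by nonnegative functions. Concretely, for any bilinear expression with Fourier representation $\widehat{B(f,g)}(\xi)=\int m(\xi,\eta)\hat f(\xi-\eta)\hat g(\eta)\,d\eta$, sub-additivity gives
\[
e^{\Phi(\xi)}|\widehat{B(f,g)}(\xi)|\le\int |m(\xi,\eta)|\,|\widehat{f_\Phi}(\xi-\eta)|\,|\widehat{g_\Phi}(\eta)|\,d\eta .
\]
So it suffices to run the classical proof for the functions $F,G$ with $\hat F=|\widehat{f_\Phi}|$ and $\hat G=|\widehat{g_\Phi}|$. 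These have the same Sobolev norms as $f_\Phi$ and $g_\Phi$.

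\textbf{Commutator $[\langle D_x\rangle^s;T_a]$.} Its symbol on the support of $S_{k-1}a\,\Delta_k f$ is
\[
\sum_k \chi(2^{-k+1}|\xi-\eta|)\,\varphi(2^{-k}|\eta|)\,\bigl(\langle\xi\rangle^s-\langle\eta\rangle^s\bigr).
\]
On this support $|\xi-\eta|\ll|\eta|\sim|\xi|$. The mean value theorem then gives $|\langle\xi\rangle^s-\langle\eta\rangle^s|\lesssim|\xi-\eta|\,\langle\eta\rangle^{s-1}$. Bounding the absolute value of the symbol this way, we get
\[
e^{\Phi}|\widehat{\cdots}|\lesssim\sum_k\int|\xi-\eta|\,|\widehat{S_{k-1}a_\Phi}|(\xi-\eta)\,\langle\eta\rangle^{s-1}|\widehat{\Delta_k f_\Phi}|(\eta)\,d\eta .
\]
By almost-orthogonality in $k$ and Young's inequality, this is controlled in $L^2_\xi$ by $\|\,|\xi|\widehat{a_\Phi}\|_{L^1}\,\|f_\Phi\|_{H^{s-1}}$. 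For $\sigma>3/2$ we have $\|\,|\xi|\widehat{a_\Phi}\|_{L^1}\lesssim\|a_\Phi\|_{H^\sigma}$, which gives the second bound.

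\textbf{Adjoint and product.} The estimate for $T_a-(T_a)^*$ is handled the same way. The symbol of $T_a^*$ involves $\bar{\hat a}(\eta-\xi)\chi(2^{-k+1}|\xi-\eta|)\varphi(2^{-k}|\xi|)$. The difference from $T_a$ therefore comes only from replacing $\varphi(2^{-k}|\eta|)$ by $\varphi(2^{-k}|\xi|)$, together with the low-frequency term. This difference is again $O(|\xi-\eta|2^{-k})$ by the mean value theorem. The first estimate, for $T_aT_b-T_{ab}$, is treated in the same spirit. We expand $T_aT_b=\sum S_{k-1}a\,\Delta_k(S_{k'-1}b\,\Delta_{k'}f)$ with $|k-k'|\le 2$. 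The difference from $\sum S_{k-1}(ab)\Delta_k f$ consists of three pieces:
\begin{itemize}
\item[(i)] the remainder $R$-type and $T_ba$-type parts of $ab$ inside $S_{k-1}$, which gain smoothness because $\sigma>3/2$ and are estimated by the $R$ bound of Lemma \ref{lem:paraestimate};
\item[(ii)] commutators between $\Delta_k$ and multiplication by low frequencies, each gaining $2^{-k}|\nabla S b|$;
\item[(iii)] differences of cutoffs, bounded as in the adjoint estimate.
\end{itemize}
Each piece is bounded by $\|\partial_x a_\Phi\|_{L^\infty}$-type norms times $\|f_\Phi\|_{H^{s-1}}$, which explains the loss of one derivative on $f$. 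Finally, $[T_a;T_b]=(T_aT_b-T_{ab})-(T_bT_a-T_{ba})$, so the last bound follows from the first.

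\textbf{Main obstacle.} The delicate point is bookkeeping: in every step we need positive majorants of the symbols, never cancellations in the $\xi$ variable, because the weight step only works with absolute values. In the commutator symbols the cancellation appears as a difference of smooth functions of $\xi$ and $\eta$ at the same point. It is converted into a factor $|\xi-\eta|$ pointwise before taking absolute values, so it survives the weight. I would check this carefully for each piece of $T_aT_b-T_{ab}$, since there several frequency cutoffs interact.
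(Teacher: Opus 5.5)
Your overall route is the one the paper intends: move $e^{\Phi}$ onto each factor by subadditivity, majorize symbols in absolute value, and use the Wiener norms $\|\widehat{a_\Phi}\|_{L^1}$ and $\||\xi|\widehat{a_\Phi}\|_{L^1}\lesssim\|a_\Phi\|_{H^\sigma}$ in place of $L^\infty$ bounds. The paper itself gives no proof beyond an appeal to the classical calculus and the subadditivity of $\Phi$. Your arguments for $[\langle D_x\rangle^s;T_a]$ and for $T_a-(T_a)^*$ are correct; the latter tacitly needs $a$ real, which holds for $u_\Phi$. The reduction of $[T_a;T_b]$ to the first bound is also correct.

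\textbf{The gap is step (i) of your treatment of $T_aT_b-T_{ab}$.} The pieces $\sum_kS_{k-1}(T_ba)\,\Delta_kf=T_{T_ba}f$ and $\sum_kS_{k-1}R(a,b)\,\Delta_kf=T_{R(a,b)}f$ are of order zero in $f$, however smooth $T_ba$ and $R(a,b)$ are. For bounded $g$ one has $S_{k-1}g\to g$, so $T_gf$ acts like $gf$ on high frequencies and cannot map $H^{s-1}$ into $H^s$. Concretely, if $\hat a$ is supported in a thin shell around $|\xi|=2^{j_0+1}$, then $T_aa=0$, so $R(a,a)=a^2$, and $T_{R(a,a)}$ coincides with multiplication by $a^2$ on frequencies $\gg 2^{j_0}$. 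So the $R$-bound of Lemma \ref{lem:paraestimate}, which only measures the Sobolev regularity of $R(a,b)$, cannot produce the gain.

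These order-zero pieces are cancelled by the matching part of $S_{k-1}a\,S_{k-1}b$ inside $T_aT_b$. What survives are interactions in which $a$ or $b$ carries a frequency $\gtrsim 2^k$. This includes the high--high$\to$low part of $ab$, which has no counterpart in $T_aT_b$ at all. These terms gain $2^{-k}$ from one derivative on the high-frequency factor, not from the smoothness of a product and not from a mean-value cancellation.

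In your majorant framework, the clean fix is to treat $T_aT_b-T_{ab}$ as a single trilinear symbol. Write $\chi_j(\zeta)=\chi(2^{-j}|\zeta|)$ and let $\varphi_j$ be the symbol of $\Delta_j$. The block acting on $\Delta_kf$ then has symbol
\[
m_k(\xi,\zeta,\eta)=\varphi_k(\eta)\Big[\chi_{k-1}(\zeta-\eta)\sum_j\chi_{j-1}(\xi-\zeta)\varphi_j(\zeta)-\chi_{k-1}(\xi-\eta)\Big]
\]
against $\hat a(\xi-\zeta)\hat b(\zeta-\eta)\hat f(\eta)$.
\begin{itemize}
\item Since $\chi\equiv1$ on $\{|\zeta|\le\frac34\}$ and $\sum_j\varphi_j\equiv1$, $m_k$ vanishes identically when $|\xi-\zeta|+|\zeta-\eta|\le c\,2^k$ for a small absolute constant $c$.
\item Everywhere else $|m_k|\le C$, hence $|m_k|\lesssim 2^{-k}(|\xi-\zeta|+|\zeta-\eta|)$.
\item Both terms are supported in $|\xi|\sim 2^k$, apart from finitely many low blocks.
\end{itemize}
Using subadditivity twice and Young's inequality, the $k$-th block is bounded in $L^2$ by
\[
2^{-k}\big(\||\xi|\widehat{a_\Phi}\|_{L^1}\|\widehat{b_\Phi}\|_{L^1}+\|\widehat{a_\Phi}\|_{L^1}\||\xi|\widehat{b_\Phi}\|_{L^1}\big)\,\|\varphi_k\widehat{f_\Phi}\|_{L^2}.
\]
After inserting the weight $\langle\xi\rangle^s\sim 2^{ks}$, almost orthogonality gives the first estimate. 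If you instead go through $\sum_kS_{k-1}a\,S_{k-1}b\,\Delta_kf$ as in your sketch, those blocks are localized only in balls, and summing them in $H^s$ is where the hypothesis $s>0$ is needed.
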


Recall the following commutator estimate from \cite[Lemma 2.3]{wang2024global}:
\begin{lemma}\label{lem:commutator2}
    For any fixed $s\in\R$, and $\sigma>\frac{3}{2}$, we have
    \begin{equation}\label{ineq:commutator}
        \|(T_a\partial_x f)_\Phi-T_a\partial_xf_\Phi\|_{H^{s}_x}\leq C(\delta-\gamma\mu(t))\|a_\Phi\|_{H^{\sigma}_x}\|f_\Phi\|_{H^{s+\frac{1}{2}}_x}.
    \end{equation}
\end{lemma}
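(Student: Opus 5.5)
The estimate in Lemma \ref{lem:commutator2} is a frequency-localized symbol estimate. I will decompose the paraproduct into dyadic pieces and use the sublinearity of $\langle\xi\rangle^{1/2}$.

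\textbf{Step 1: reduction to a symbol estimate.} Write $T_a\partial_x f=\sum_k S_{k-1}a\,\Delta_k\partial_x f$. On the Fourier side,
\begin{equation*}
\mathcal{F}\big((T_a\partial_x f)_\Phi - T_a\partial_x f_\Phi\big)(\xi)=\sum_k\int \chi(2^{-k+1}|\xi-\eta|)\varphi(2^{-k}|\eta|)\, i\eta\,\big(e^{\Phi(t,\xi)}-e^{\Phi(t,\eta)}\big)\hat a(\xi-\eta)\hat f(\eta)\,d\eta .
\end{equation*}
The support of the summand forces $|\xi-\eta|\le \frac{2}{3}2^{k}$ and $|\eta|\sim 2^k$. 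Hence $|\xi-\eta|\lesssim|\eta|$ and $\langle\xi\rangle\sim\langle\eta\rangle$.

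\textbf{Step 2: the difference of exponentials.} By the mean value theorem,
\begin{equation*}
|e^{\Phi(t,\xi)}-e^{\Phi(t,\eta)}|\le|\Phi(t,\xi)-\Phi(t,\eta)|\,e^{\max(\Phi(t,\xi),\Phi(t,\eta))}.
\end{equation*}
Since $\big|\langle\xi\rangle^{1/2}-\langle\eta\rangle^{1/2}\big|\le \frac{|\xi-\eta|}{\langle\xi\rangle^{1/2}+\langle\eta\rangle^{1/2}}\lesssim \frac{\langle\xi-\eta\rangle}{\langle\eta\rangle^{1/2}}$, we get
\begin{equation*}
|\Phi(t,\xi)-\Phi(t,\eta)|\lesssim(\delta-\gamma\mu(t))\langle\xi-\eta\rangle\langle\eta\rangle^{-1/2}.
\end{equation*}
Next, the subadditivity $\Phi(t,\xi)\le\Phi(t,\xi-\eta)+\Phi(t,\eta)$ holds for $t<T^*$, and $\Phi(t,\eta)\le\Phi(t,\xi-\eta)+\Phi(t,\xi)$ holds trivially. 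Together these bound the exponential factor by $e^{\Phi(t,\xi-\eta)}e^{\Phi(t,\eta)}$. Combined with the factor $|\eta|$, the symbol is therefore bounded by
\begin{equation*}
(\delta-\gamma\mu)\,\langle\xi-\eta\rangle\, e^{\Phi(t,\xi-\eta)}|\hat a(\xi-\eta)|\;\langle\eta\rangle^{1/2}e^{\Phi(t,\eta)}|\hat f(\eta)|.
\end{equation*}
So half a derivative is gained on $f$ relative to $\partial_x$, at the cost of one derivative on $a$.

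\textbf{Step 3: summation.} Define $A=\mathcal{F}^{-1}(\langle\cdot\rangle e^{\Phi}|\hat a|)$ and $F=\mathcal{F}^{-1}(\langle\cdot\rangle^{1/2}e^{\Phi}|\hat f|)$. The operator is then dominated, block by block, by the paraproduct $T_A F$ with nonnegative Fourier data. Applying the usual $\Phi=0$ paraproduct bound of Lemma \ref{lem:paraestimate} (which needs $\|A\|_{H^{\sigma'}}$ with $\sigma'>\frac12$, i.e. $a_\Phi\in H^{\sigma}$ with $\sigma>\frac32$), together with almost orthogonality of the dyadic blocks $\langle\xi\rangle\sim 2^k$, gives
\begin{equation*}
\|\cdot\|_{H^s_x}\lesssim(\delta-\gamma\mu)\|a_\Phi\|_{H^\sigma_x}\|f_\Phi\|_{H^{s+1/2}_x}.
\end{equation*}
Concretely, I will bound $\|S_{k-1}A\|_{L^\infty}\lesssim \|\langle\cdot\rangle e^\Phi\hat a\|_{L^1}\lesssim\|a_\Phi\|_{H^\sigma}$ by Cauchy--Schwarz, which is exactly where $\sigma>\frac32$ is used.

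\textbf{Main obstacle.} The only delicate point is the low frequency block $k=-1$, where $\langle\xi\rangle\sim\langle\eta\rangle\sim1$. There the gain $\langle\eta\rangle^{-1/2}$ is trivial but harmless, so the bound still holds, including the factor $\delta-\gamma\mu(t)\ge0$, which comes directly from the mean value estimate in Step 2.
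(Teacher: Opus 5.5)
Your proof is correct. The paper does not prove this lemma itself; it quotes it from \cite[Lemma 2.3]{wang2024global}. Your argument is the standard route to that estimate: the mean value bound on $e^{\Phi}$, the gain $|\langle\xi\rangle^{1/2}-\langle\eta\rangle^{1/2}|\le|\xi-\eta|\,(\langle\xi\rangle^{1/2}+\langle\eta\rangle^{1/2})^{-1}$, subadditivity of $\Phi$ for $t<T^*$, and $\|\langle\cdot\rangle e^{\Phi}\hat a\|_{L^1}\lesssim\|a_\Phi\|_{H^\sigma}$ for $\sigma>\frac32$.

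Two small points. In the case $\max=\Phi(t,\eta)$ you only need $\Phi(t,\xi-\eta)\ge0$, not the inequality you quote. The pointwise domination by $T_AF$ assumes $\chi,\varphi\ge0$; if they are not, your blockwise Young-plus-almost-orthogonality version gives the same bound.
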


\section{A priori estimates}\label{sec:priori}
The main goal of this section is to prove the a priori estimates for the solution of the problem \eqref{eq:main}. Before proceeding to the proof, we make the following assumption which will be verified in Section 4 by the bootstrap argument:
\begin{assumption}\label{assumption}
    Suppose that the solution $(u,\theta)$ of \eqref{eq:main} satisfies the following boundedness: 
    \begin{equation}\label{ineq:assumption}
    \|\partial_y u_\Phi\|_{L_{T^\ast}^\infty(H^{s,0}_\Psi)} \leq M, \qquad
    \|\partial_y \theta_\Phi\|_{L_{T^\ast}^\infty(H^{s,0}_\Psi)} \leq \zeta \triangleq 2\epsilon
    \end{equation}
    where $s$ and $\epsilon$ are given in Theorem \ref{thm:main}, and $M$ is the constant defined as
    \begin{equation}\label{def:assumption}
        M \triangleq 2(\|e^{\delta\langle D_x\rangle^{\frac{1}{2}}}\partial_yu_0\|_{H^{s,0}_{\Psi_0}}+\|e^{\delta\langle D_x\rangle^{\frac{1}{2}}}\partial_y\theta_0\|_{H^{s,0}_{\Psi_0}})<\infty. 
    \end{equation}
\end{assumption}
It is worth to note that all the estimates of this section are derived under Assumption \ref{assumption}.
\subsection{Estimate on \texorpdfstring{$\mathcal{U}$}{partial x\textasciicircum m U}}
The proposal of this subsection is to establish the following  priori estimate for the function $\mathcal{U}$ defined in \eqref{eq:intmathcalU}. 

\begin{proposition}\label{prop:mathcalU}
    Suppose that $\mathcal{U}$ is a solution to \eqref{eq:intmathcalU} with norms appeared in \eqref{ineq:mathcalU}being finite. Then for any $0<t<T^*$ with $T^*$ being given in \eqref{def:T*} and $0<\eta<1$, there exists a constant $C_\eta$ such that
    \begin{equation}\label{ineq:mathcalU}
        \begin{aligned}
            &\|\mathcal{U}_\Phi(t)\|_{H^{s,0}_\Psi}^2 
            + 2(\gamma - C_\eta) \|\mathcal{U}_\Phi\|_{L^2_{t,\dot{\mu}}(H^{s+\frac{1}{4},0}_\Psi)}^2 
            + \big(\theta^E - C\zeta (1+t)^{1/4} - 2\eta\big) \|\partial_y \mathcal{U}_\Phi\|_{L^2_t(H^{s,0}_\Psi)}^2 \\
            &\qquad \leq\; 
            2\eta \|\partial_y u_\Phi\|_{L^2_t(H^{s+1,0}_\Psi)}^2 
            + C \|\lambda_\Phi\|_{L^2_{t,\dot{\mu}}(H^{s+\frac{3}{4},0}_\Psi)}^2 
            + \frac{5}{\theta^E} \|\partial_y \theta_\Phi\|_{L^2_t(H^{s+1,0}_\Psi)}^2,
        \end{aligned}
    \end{equation}
    with $\lambda$ being defined in \eqref{def:lambda}.
\end{proposition}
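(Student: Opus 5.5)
First I derive the evolution equation for $\mathcal{U}$ itself. Differentiating the defining relation \eqref{eq:intmathcalU} in $y$ and using $\partial_y(T_u\partial_x w)=T_u\partial_x\partial_y w+T_{\partial_yu}\partial_x w$ (and similarly for the $T_v$ and $T_{\theta+\theta^E}$ terms), with $w=\int_0^y\mathcal{U}d\tilde y$, gives
\[
\mathcal{L}\mathcal{U}=-\partial_x\partial_y v-T_{\partial_yu}\partial_x w-T_{\partial_yv}\mathcal{U}+T_{\partial_y\theta}\partial_y\mathcal{U}.
\]
Next I rewrite $-\partial_x\partial_yv$ using the divergence relation: $-\partial_x\partial_y v=\partial_x^2u-\partial_x\partial_y^2\theta-2\partial_x(\partial_yu\,\partial_y^2 u)$. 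The key cancellation comes from the identity $\partial_x^2 u=\partial_x\lambda+\partial_x(T_{\partial_yu}w)=\partial_x\lambda+T_{\partial_yu}\partial_x w+T_{\partial_x\partial_yu}w$. Here the term $T_{\partial_yu}\partial_xw$ cancels the dangerous term above, and this is exactly where the definition \eqref{def:lambda} is used. We are left with
\[
\mathcal{L}\mathcal{U}=\partial_x\lambda-\partial_x\partial_y^2\theta-2\partial_x(\partial_yu\,\partial_y^2u)+T_{\partial_x\partial_yu}w-T_{\partial_yv}\mathcal{U}+T_{\partial_y\theta}\partial_y\mathcal{U}.
\]

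Then I apply $e^{\Phi}$ and take the $H^{s,0}_\Psi$ inner product with $\mathcal{U}_\Phi$. The time derivative of $\Phi$ produces $\gamma\dot\mu\|\mathcal{U}_\Phi\|^2_{H^{s+\frac14,0}_\Psi}$. Differentiating the weight and using \eqref{eq:weightpro} handles $\partial_t\Psi$.

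The diffusion term is treated as follows. I write $T_{\theta+\theta^E}\partial_y^2=\theta^E\partial_y^2+T_\theta\partial_y^2$ and integrate by parts, using $\partial_y\mathcal{U}|_{y=0}=0$. This gives $\theta^E\|\partial_y\mathcal{U}_\Phi\|^2$ minus a cross term $2\theta^E\langle\partial_y\mathcal{U}_\Phi,\partial_y\Psi\,\mathcal{U}_\Phi\rangle$. That cross term is absorbed using Cauchy–Schwarz against the $\partial_t\Psi$ term, which is why the weight has the factor $16\theta^E$. The $T_\theta$ part gives an error of size $\|\theta_\Phi\|_{L^\infty}\|\partial_y\mathcal{U}_\Phi\|^2$, plus commutator terms controlled by Lemma \ref{lem:commutator}. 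Since $\|\theta_\Phi\|_{L^\infty}\lesssim(1+t)^{1/4}\zeta$ by Lemma \ref{lem:Poincare} and Assumption \ref{assumption}, this yields the loss $C\zeta(1+t)^{1/4}$. The term $T_{\partial_y\theta}\partial_y\mathcal{U}$ is bounded the same way.

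The transport terms $T_u\partial_x$ and $T_v\partial_y$ are handled in three pieces:
\begin{itemize}
\item Lemma \ref{lem:commutator2} commutes $e^{\Phi}$ past $T_u\partial_x$, costing $\|u_\Phi\|_{H^{\frac52+}}\|\mathcal{U}_\Phi\|^2_{H^{s+\frac14}}$.
\item Lemma \ref{lem:commutator} handles the symmetrization, i.e.\ $T_u-T_u^*$ and the commutator with $\langle D_x\rangle^s$.
\item For the $T_v\partial_y$ term I integrate by parts in $y$; the resulting $\partial_yv$ contributes a half-derivative loss.
\end{itemize}
All these produce factors appearing in $\dot\mu$ from \eqref{eq:mu}, so they are bounded by $C\dot\mu\|\mathcal{U}_\Phi\|^2_{H^{s+\frac14,0}_\Psi}$ and absorbed by $\gamma$, giving the $(\gamma-C_\eta)$ coefficient. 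The lower-order terms $T_{\partial_x\partial_yu}w$ and $T_{\partial_yv}\mathcal{U}$ are treated similarly. There I use a Hardy-type bound on $w=\int_0^y\mathcal{U}$ in the weighted space, or equivalently Lemma \ref{lem:Psi}.

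The remaining source terms are handled one at a time.
\begin{itemize}
\item For $\partial_x\lambda$, I split $\langle\partial_x\lambda_\Phi,\mathcal{U}_\Phi\rangle_{H^s}\le\|\lambda_\Phi\|_{H^{s+\frac34}}\|\mathcal{U}_\Phi\|_{H^{s+\frac14}}$. Since $\dot\mu\ge1$, this is at most $\dot\mu(\frac12\|\mathcal{U}_\Phi\|^2_{H^{s+\frac14}}+C\|\lambda_\Phi\|^2_{H^{s+\frac34}})$, which gives the $\lambda$ term in \eqref{ineq:mathcalU}.
\item For $\partial_x\partial_y^2\theta$, I integrate by parts in $y$. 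The boundary term vanishes since $\partial_y\theta|_{y=0}=0$. This leaves $\|\partial_y\theta_\Phi\|_{H^{s+1}}\|\partial_y\mathcal{U}_\Phi\|_{H^s}$ plus a $\partial_y\Psi$ piece handled by Lemma \ref{lem:Psi}. Young's inequality with weight $\theta^E/4$ then yields the $\frac{5}{\theta^E}\|\partial_y\theta_\Phi\|^2_{H^{s+1}}$ term.
\item For $\partial_x(\partial_yu\,\partial_y^2u)$, I use Bony's decomposition and integrate by parts in $y$ on the piece $T_{\partial_yu}\partial_x\partial_y^2 u$. This gives $\|\partial_yu_\Phi\|_{L^\infty}\|\partial_yu_\Phi\|_{H^{s+1}}\|\partial_y\mathcal{U}_\Phi\|$. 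By Young's inequality with $\eta$, this is bounded by $\eta\|\partial_yu_\Phi\|^2_{H^{s+1}}+\eta\|\partial_y\mathcal{U}_\Phi\|^2$ plus $C_\eta$ times quantities absorbed into $\dot\mu$. The other Bony pieces are bounded by Lemma \ref{lem:paraestimate}.
\end{itemize}
Integrating in time from $0$, with $\mathcal{U}|_{t=0}=0$, gives \eqref{ineq:mathcalU}.

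I expect the main obstacle to be this last group of terms together with the $T_v\partial_y$ transport term. In these, $v$ (or $\partial_y v$) carries a full $x$-derivative of $u$ through the nonlocal relation $v=-\int_0^y\partial_xu+\partial_y\theta+\int_0^y(\partial_yu)^2$. This must be matched carefully against the $\frac14$-gain from $\dot\mu$ and the $y$-dissipation, so that every term is either $\le\eta$ times a dissipative norm or $\le C\dot\mu\|\mathcal{U}_\Phi\|^2_{H^{s+\frac14}}$.
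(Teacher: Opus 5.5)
Your overall route is the paper's. You differentiate \eqref{eq:intmathcalU} in $y$, use $\partial_x^2u=\partial_x\lambda+T_{\partial_yu}\partial_x w+T_{\partial_x\partial_yu}w$ to cancel $T_{\partial_yu}\partial_x w$, and then run the $H^{s,0}_\Psi$ energy estimate, putting coefficients into $\dot\mu$ and splitting $\langle\partial_x\lambda_\Phi,\mathcal{U}_\Phi\rangle$ between $H^{s+\frac34}$ and $H^{s+\frac14}$. (Your sign $+T_{\partial_y\theta}\partial_y\mathcal{U}$ is actually the correct one; it does not affect the estimates.)

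However, an algebra slip produces a step that fails as written. From $\partial_yv=\partial_y^2\theta+(\partial_yu)^2-\partial_xu$ one gets
\[
-\partial_x\partial_yv=\partial_x^2u-\partial_x\partial_y^2\theta-2\,\partial_yu\,\partial_x\partial_yu,
\]
not $-2\partial_x(\partial_yu\,\partial_y^2u)$; you have taken one $y$-derivative too many. Your treatment of the spurious term would not close, for two reasons.
\begin{enumerate}
\item Integrating $T_{\partial_yu}\partial_x\partial_y^2u$ by parts against $\mathcal{U}_\Phi$ leaves a boundary term at $y=0$ that does not vanish. Only $\partial_y\mathcal{U}$ is zero there; neither $\partial_yu$ nor $\mathcal{U}$ is.
\item The bulk term $\|\partial_yu_\Phi\|_{L^\infty}\|\partial_yu_\Phi\|_{H^{s+1}}\|\partial_y\mathcal{U}_\Phi\|_{H^s}$ cannot be bounded by $\eta(\|\partial_yu_\Phi\|^2_{H^{s+1,0}_\Psi}+\|\partial_y\mathcal{U}_\Phi\|^2_{H^{s,0}_\Psi})+C_\eta\dot\mu\|\mathcal{U}_\Phi\|^2$. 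Both remaining factors are dissipation norms and the coefficient is of size $M$, not small, so no undifferentiated $\|\mathcal{U}_\Phi\|$ is left for $C_\eta\dot\mu$ to attach to.
\end{enumerate}
With the correct term no integration by parts is needed. Bony's decomposition with Lemmas \ref{lem:paraestimate} and \ref{lem:Poincare} gives
\[
|\langle[\partial_yu\,\partial_x\partial_yu]_\Phi,\mathcal{U}_\Phi\rangle_{H^{s,0}_\Psi}|\lesssim(1+t)^{\frac14}\|\partial_y^2u_\Phi\|_{H^{\frac32+,0}_\Psi}\|\partial_yu_\Phi\|_{H^{s+1,0}_\Psi}\|\mathcal{U}_\Phi\|_{H^{s,0}_\Psi}.
\]
Young's inequality then gives $\eta\|\partial_yu_\Phi\|^2_{H^{s+1,0}_\Psi}+C_\eta\dot\mu\|\mathcal{U}_\Phi\|^2_{H^{s,0}_\Psi}$. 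This is the paper's bound for $A_6$ and the source of the $2\eta\|\partial_yu_\Phi\|^2_{L^2_t(H^{s+1,0}_\Psi)}$ term.

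Two smaller corrections.
\begin{enumerate}
\item For $T_{\partial_x\partial_yu}w$ you cannot place $w=\int_0^y\mathcal{U}\,d\tilde y$ in a weighted $L^2$ space: it tends to $\int_0^\infty\mathcal{U}$ as $y\to\infty$, against a Gaussian weight. The paper puts $w$ in $L^\infty_y(H^s_x)$, using $\|w\|_{L^\infty_y(H^s_x)}\le\int_0^\infty\|\mathcal{U}_\Phi\|_{H^s_x}d\tilde y\lesssim(1+t)^{\frac14}\|\mathcal{U}_\Phi\|_{H^{s,0}_\Psi}$, and keeps $\partial_x\partial_yu$ in $H^{\frac32+,0}_\Psi$. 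Lemma \ref{lem:Psi} plays no role there.
\item Your Young step with weight $\theta^E/4$ combined with Lemma \ref{lem:Psi} does not produce the exact constants $\theta^E$ and $\frac{5}{\theta^E}$ in \eqref{ineq:mathcalU}. The paper gets them differently:
\begin{itemize}
\item it splits the weight cross term with $\kappa=3$, so that $\theta^E\|\partial_y\Psi\,\mathcal{U}_\Phi\|^2_{H^{s,0}_\Psi}$ survives on the left;
\item it absorbs $2\langle\partial_y\Psi\,\partial_x\partial_y\theta_\Phi,\mathcal{U}_\Phi\rangle$ into that surviving term;
\item it uses weight $\theta^E/6$ on $\langle\partial_x\partial_y\theta_\Phi,\partial_y\mathcal{U}_\Phi\rangle$.
\end{itemize}
Your route changes the numerical constant in front of $\|\partial_y\theta_\Phi\|^2_{L^2_t(H^{s+1,0}_\Psi)}$. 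That is harmless later, since $k$ is chosen large, but it is not literally the stated inequality.
\end{enumerate}
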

\noindent 
By applying $\partial_y$ on \eqref{eq:intmathcalU} it follows
\begin{equation}\label{eq:mathcalU}
    \begin{aligned}
    \mathcal{L}(\mathcal{U})=&-\partial_x\partial_yv-T_{\partial_y u}\partial_x\int_0^y\mathcal{U}d\tilde{y}-T_{\partial_y v}\mathcal{U}-T_{\partial_y\theta}\partial_y\mathcal{U}\\
    =&\partial_x^2u-T_{\partial_y u}\partial_x\int_0^y\mathcal{U}d\tilde{y}-2\partial_yu\partial_x\partial_yu-\partial_x\partial_y^2\theta-T_{\partial_y v}\mathcal{U}-T_{\partial_y\theta}\partial_y\mathcal{U}\\
    =&\partial_x\lambda+T_{\partial_x\partial_yu}\int_0^y\mathcal{U}d\tilde{y}-2\partial_yu\partial_x\partial_yu-\partial_x\partial_y^2\theta-T_{\partial_y v}\mathcal{U}-T_{\partial_y\theta}\partial_y\mathcal{U},
    \end{aligned}
\end{equation}
where we have used the third equation given in \eqref{eq:main}, and $\lambda$ is given in \eqref{def:lambda}. Moreover, we apply the operator $e^{\Phi(t,D_x)}$ to \eqref{eq:mathcalU} and take $H^{s,0}_\Psi$ inner product with $\mathcal{U}_\Phi$ to get
\begin{equation}\label{eq:innerofmathcalU}
    \begin{aligned}
        &\langle\partial_t\mathcal{U}_\Phi,\mathcal{U}_\Phi\rangle_{H^{s,0}_\Psi}+\gamma\langle\dot{\mu}(t)\langle D_x\rangle^\frac{1}{2}\mathcal{U}_\Phi,\mathcal{U}_\Phi\rangle_{H^{s,0}_\Psi}-\theta^E\langle\partial_y^2\mathcal{U}_\Phi,\mathcal{U}_\Phi\rangle_{H^{s,0}_\Psi}\\
        =&-\langle [T_u\partial_x\mathcal{U}]_\Phi,\mathcal{U}_\Phi\rangle_{H^{s,0}_\Psi}-\langle [T_v\partial_y\mathcal{U}]_\Phi,\mathcal{U}_\Phi\rangle_{H^{s,0}_\Psi}+\langle [T_\theta\partial_y^2\mathcal{U}]_\Phi,\mathcal{U}_\Phi\rangle_{H^{s,0}_\Psi}+\langle \partial_x\lambda_\Phi,\mathcal{U}_\Phi\rangle_{H^{s,0}_\Psi}\\
        &+\langle \left[T_{\partial_x\partial_yu}\int_0^y\mathcal{U}d\tilde{y}\right]_\Phi,\mathcal{U}_\Phi\rangle_{H^{s,0}_\Psi}-2\langle [\partial_yu\partial_x\partial_yu]_\Phi,\mathcal{U}_\Phi\rangle_{H^{s,0}_\Psi}-\langle \partial_x\partial_y^2\theta_\Phi,\mathcal{U}_\Phi\rangle_{H^{s,0}_\Psi}\\
        &-\langle [T_{\partial_yv}\mathcal{U}]_\Phi,\mathcal{U}_\Phi\rangle_{H^{s,0}_\Psi}-\langle [T_{\partial_y\theta}\partial_y\mathcal{U}]_\Phi,\mathcal{U}_\Phi\rangle_{H^{s,0}_\Psi}\\
        \triangleq&A_1+\cdots+A_9.
    \end{aligned}
\end{equation}

\begin{lemma}\label{lem:leftofmathcalU}
    For any $0<t<T^*$, the terms on the left-hand side of \eqref{eq:innerofmathcalU} satisfy the following estimate:
    \begin{equation}\label{ineq:leftofmathcalU}
        \begin{aligned}
            &\langle\partial_t\mathcal{U}_\Phi,\mathcal{U}_\Phi\rangle_{H^{s,0}_\Psi}+\gamma\langle\dot{\mu}(t)\langle D_x\rangle^\frac{1}{2}\mathcal{U}_\Phi,\mathcal{U}_\Phi\rangle_{H^{s,0}_\Psi}-\theta^E\langle\partial_y^2\mathcal{U}_\Phi,\mathcal{U}_\Phi\rangle_{H^{s,0}_\Psi}\\
            \geq& \frac{1}{2}\frac{d}{dt}\|\mathcal{U}_\Phi(t)\|_{H^{s,0}_\Psi}^2+\gamma\dot{\mu}(t)\|\mathcal{U}_\Phi\|_{H^{s+\frac{1}{4},0}_\Psi}^2+\frac{2}{3}\theta^E\|\partial_y\mathcal{U}_\Phi\|_{H^{s,0}_\Psi}^2+\theta^E\|\partial_y\Psi\mathcal{U}_\Phi\|_{H^{s,0}_\Psi}^2.
        \end{aligned}
    \end{equation}
\end{lemma}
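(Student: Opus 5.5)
The plan is to compute the three terms on the left-hand side of \eqref{eq:innerofmathcalU} exactly, by integrating by parts in $t$ and $y$. The only estimate needed is a single Young's inequality on the cross term produced by the $y$-dependence of the weight $e^{2\Psi}$. Since $\langle D_x\rangle^s$ and $e^{\Phi(t,D_x)}$ are Fourier multipliers in $x$ alone, they commute with $\partial_y$ and with multiplication by functions of $(t,y)$. We may therefore work with $\langle D_x\rangle^s\mathcal{U}_\Phi$ and treat the weight as a scalar function of $(t,y)$. The $\dot\mu$ term has already been separated out in \eqref{eq:innerofmathcalU}, so $\partial_t\mathcal{U}_\Phi$ here means the plain time derivative of $\mathcal{U}_\Phi$.

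\emph{Time derivative.} Because the weight depends on $t$,
$$\frac12\frac{d}{dt}\|\mathcal{U}_\Phi\|_{H^{s,0}_\Psi}^2=\langle\partial_t\mathcal{U}_\Phi,\mathcal{U}_\Phi\rangle_{H^{s,0}_\Psi}+\int\!\!\int\partial_t\Psi\, e^{2\Psi}|\langle D_x\rangle^s\mathcal{U}_\Phi|^2dxdy .$$
By \eqref{eq:weightpro}, $\partial_t\Psi=-4\theta^E(\partial_y\Psi)^2$. Hence
$$\langle\partial_t\mathcal{U}_\Phi,\mathcal{U}_\Phi\rangle_{H^{s,0}_\Psi}=\frac12\frac{d}{dt}\|\mathcal{U}_\Phi\|_{H^{s,0}_\Psi}^2+4\theta^E\|\partial_y\Psi\,\mathcal{U}_\Phi\|_{H^{s,0}_\Psi}^2 .$$

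\emph{Gevrey term.} By Plancherel, $\gamma\dot\mu\langle\langle D_x\rangle^{\frac12}\mathcal{U}_\Phi,\mathcal{U}_\Phi\rangle_{H^{s,0}_\Psi}=\gamma\dot\mu\|\mathcal{U}_\Phi\|_{H^{s+\frac14,0}_\Psi}^2$. This uses $\langle\xi\rangle^{2s}\langle\xi\rangle^{\frac12}=\langle\xi\rangle^{2(s+\frac14)}$.

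\emph{Diffusion term.} Integrate by parts in $y$. The boundary term at $y=0$ is $\partial_y\mathcal{U}_\Phi\,\overline{\mathcal{U}_\Phi}e^{2\Psi}|_{y=0}$, which vanishes by the Neumann condition $\partial_y\mathcal{U}|_{y=0}=0$ in \eqref{eq:intmathcalU}. The term at $y=\infty$ vanishes by the assumed decay, i.e.\ the finiteness of the norms in Proposition \ref{prop:mathcalU}. This gives
$$-\theta^E\langle\partial_y^2\mathcal{U}_\Phi,\mathcal{U}_\Phi\rangle_{H^{s,0}_\Psi}=\theta^E\|\partial_y\mathcal{U}_\Phi\|_{H^{s,0}_\Psi}^2+2\theta^E\langle\partial_y\mathcal{U}_\Phi,\partial_y\Psi\,\mathcal{U}_\Phi\rangle_{H^{s,0}_\Psi}.$$
Bound the cross term by Cauchy--Schwarz and Young:
$$2\theta^E\big|\langle\partial_y\mathcal{U}_\Phi,\partial_y\Psi\,\mathcal{U}_\Phi\rangle_{H^{s,0}_\Psi}\big|\le\frac{\theta^E}{3}\|\partial_y\mathcal{U}_\Phi\|_{H^{s,0}_\Psi}^2+3\theta^E\|\partial_y\Psi\,\mathcal{U}_\Phi\|_{H^{s,0}_\Psi}^2.$$
Adding the three pieces yields \eqref{ineq:leftofmathcalU}. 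The coefficient of $\|\partial_y\mathcal{U}_\Phi\|^2$ is $\theta^E-\frac{\theta^E}{3}=\frac23\theta^E$. The coefficient of $\|\partial_y\Psi\,\mathcal{U}_\Phi\|^2$ is $4\theta^E-3\theta^E=\theta^E$.

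There is no real obstacle here. The only points needing care are keeping track of the sign of the $\partial_t\Psi$ contribution, which \eqref{eq:weightpro} makes favorable, and justifying the vanishing of the boundary terms. That justification is exactly where the boundary conditions on $\mathcal{U}$ in \eqref{eq:intmathcalU} enter.
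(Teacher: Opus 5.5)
Your proposal is correct and is essentially the paper's proof. The paper also integrates by parts in $t$ and $y$, uses $\partial_t\Psi+4\theta^E(\partial_y\Psi)^2=0$, and applies Young's inequality with a parameter $\kappa$ to the cross term $2\theta^E\langle\partial_y\Psi\,\partial_y\mathcal{U}_\Phi,\mathcal{U}_\Phi\rangle_{H^{s,0}_\Psi}$, then takes $\kappa=3$ to get exactly $\frac{2}{3}\theta^E$ and $\theta^E$; the only difference is that you spell out why the boundary terms vanish ($\partial_y\mathcal{U}|_{y=0}=0$ and decay at infinity), which the paper leaves implicit.
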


\begin{proof}
    In view of \eqref{eq:weightpro}, we get, by using integration by parts and Young's inequality, that
    \begin{equation}
        \begin{aligned}
        &\langle\partial_t\mathcal{U}_\Phi-\theta^E\partial_y^2\mathcal{U}_\Phi,\mathcal{U}_\Phi\rangle_{H^{s,0}_\Psi}\\
        =&\frac{1}{2}\frac{d}{dt}\|\mathcal{U}_\Phi(t)\|_{H^{s,0}_\Psi}^2-\langle\partial_t\Psi \mathcal{U}_\Phi,\mathcal{U}_\Phi\rangle_{H^{s,0}_\Psi}+\theta^E\|\partial_y\mathcal{U}_\Phi\|_{H^{s,0}_\Psi}^2+2\theta^E\langle\partial_y\Psi \partial_y\mathcal{U}_\Phi,\mathcal{U}_\Phi\rangle_{H^{s,0}_\Psi}\\
        \geq&\frac{1}{2}\frac{d}{dt}\|\mathcal{U}_\Phi(t)\|_{H^{s,0}_\Psi}^2+(1-\frac{1}{\kappa})\theta^E\|\partial_y\mathcal{U}_\Phi\|_{H^{s,0}_\Psi}^2-\langle(\partial_t\Psi+\kappa\theta^E(\partial_y\Psi)^2)\mathcal{U}_\Phi,\mathcal{U}_\Phi\rangle_{H^{s,0}_\Psi}\\
        =&\frac{1}{2}\frac{d}{dt}\|\mathcal{U}_\Phi(t)\|_{H^{s,0}_\Psi}^2+(1-\frac{1}{\kappa})\theta^E\|\partial_y\mathcal{U}_\Phi\|_{H^{s,0}_\Psi}^2+(4-\kappa)\theta^E\|\partial_y\Psi\mathcal{U}_\Phi\|_{H^{s,0}_\Psi}^2,
        \end{aligned}
    \end{equation}
    for a positive constant $\kappa>0$.
    A direct computation shows that
    \begin{equation}
        \gamma\langle\dot{\mu}(t)\langle D_x\rangle^\frac{1}{2}\mathcal{U}_\Phi,\mathcal{U}_\Phi\rangle_{H^{s,0}_\Psi}=\gamma\dot{\mu}(t)\|\mathcal{U}_\Phi\|_{H^{s+\frac{1}{4},0}_\Psi}^2.
    \end{equation}
    By taking $\kappa=3$ and summing up the above estimates, we complete the proof of \eqref{ineq:leftofmathcalU}.
\end{proof}

\begin{lemma}\label{lem:rightofmathcalU}
    For any $0<t<T^*$, and $\eta>0$, there exists a constant $C_\eta>0$, such that the terms $A_1,\cdots,A_9$ given on the right-hand side of \eqref{eq:innerofmathcalU} satisfy the following estimate:
    \begin{equation}\label{3.9}
        \begin{aligned}
            \sum\limits_{i=1}^9|A_i|\leq& C_\eta\dot{\mu}(t)\|\mathcal{U}_\Phi\|_{H^{s+\frac{1}{4},0}_\Psi}^2+\eta(\|\partial_y\mathcal{U}_\Phi\|_{H^{s,0}_\Psi}^2+\|\partial_yu_\Phi\|_{H^{s+1,0}_\Psi}^2)+C\dot{\mu}(t)\|\lambda_\Phi\|_{H^{s+\frac{3}{4},0}_\Psi}^2\\
            &+\left(\frac{1}{6}\theta^E+C\zeta(1+t)^\frac{1}{4}\right)\|\partial_y\mathcal{U}_\Phi\|_{H^{s,0}_\Psi}^2+\frac{5}{2\theta^E}\|\partial_y\theta_\Phi\|_{H^{s+1,0}_\Psi}^2+\theta^E\|\partial_y\Psi\mathcal{U}_\Phi\|_{H^{s,0}_\Psi}^2.
        \end{aligned}
    \end{equation}
\end{lemma}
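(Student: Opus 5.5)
The proof estimates $A_1,\dots,A_9$ one at a time. The tools are the para-product laws (Lemmas \ref{lem:paraestimate}, \ref{lem:commutator}, \ref{lem:commutator2}), the weighted Hardy-type bound of Lemma \ref{lem:Psi}, and the Poincaré-type bound of Lemma \ref{lem:Poincare}. Every coefficient norm of $u,v,\theta$ with low tangential regularity (index $\frac12+$, $\frac32+$, $\frac52+$) is absorbed into $\dot\mu(t)$ through the definition \eqref{eq:mu}. Throughout, $\|v_\Phi\|$ in $L^\infty_y$ is controlled via $v=-\int_0^y(\partial_xu-\partial_y^2\theta-(\partial_yu)^2)$, together with the $e^{2\Psi}$-weighted Cauchy--Schwarz inequality and Lemma \ref{lem:Psi}.

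\textbf{Transport terms.} For $A_1$, we first use Lemma \ref{lem:commutator2} to replace $(T_u\partial_x\mathcal U)_\Phi$ by $T_u\partial_x\mathcal U_\Phi$; the error is $\lesssim \|u_\Phi\|_{H^{\frac32+}}\|\mathcal U_\Phi\|_{H^{s+\frac12}}$. We then commute with $\langle D_x\rangle^s$ and integrate by parts in $x$, using $T_u-(T_u)^*$ from Lemma \ref{lem:commutator}. This gives
\[
|A_1|\lesssim \|u_\Phi\|_{L^\infty_y H^{\frac52+}}\|\mathcal U_\Phi\|^2_{H^{s+\frac14,0}_\Psi},
\]
which is $\le C\dot\mu\|\mathcal U_\Phi\|_{H^{s+\frac14,0}_\Psi}^2$ by Lemma \ref{lem:Poincare} and \eqref{eq:mu}. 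For $A_2$ we integrate by parts in $y$. This produces $T_{\partial_yv}\mathcal U$ and $2\partial_y\Psi\, T_v\mathcal U$. The second piece is bounded by $\frac{\theta^E}{2}\|\partial_y\Psi\mathcal U_\Phi\|^2_{H^{s,0}_\Psi}$ plus $C\|v_\Phi\|^2_{L^\infty}\|\mathcal U_\Phi\|^2_{H^{s}}$. Since $v$ costs one $x$-derivative, its coefficient contains $\|u_\Phi\|^2_{H^{\frac52+}}(1+t)^{1/2}$, which is part of $\dot\mu$. $A_8$ is treated in the same way.

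\textbf{Diffusion-type terms.} For $A_3$ and $A_9$ we integrate by parts in $y$, using $\partial_y\mathcal U|_{y=0}=0$, and use Lemma \ref{lem:paraestimate}. The leading contribution is
\[
\|\theta_\Phi\|_{L^\infty_yH^{\frac12+}}\|\partial_y\mathcal U_\Phi\|^2_{H^{s,0}_\Psi}\le C\zeta(1+t)^{1/4}\|\partial_y\mathcal U_\Phi\|^2_{H^{s,0}_\Psi},
\]
by Lemma \ref{lem:Poincare} and Assumption \ref{assumption}. The weight derivative term $\partial_y\Psi$ gives another $\frac{\theta^E}{2}\|\partial_y\Psi\mathcal U_\Phi\|^2$. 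The commutator remainders go into $C_\eta\dot\mu\|\mathcal U_\Phi\|^2+\eta\|\partial_y\mathcal U_\Phi\|^2$.

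\textbf{Source terms.} For $A_4$ we split the derivative as $\langle D_x\rangle^{3/4}\cdot\langle D_x\rangle^{1/4}$. Writing $\dot\mu\ge 1$,
\[
|A_4|\le \|\lambda_\Phi\|_{H^{s+\frac34,0}_\Psi}\|\mathcal U_\Phi\|_{H^{s+\frac14,0}_\Psi}\le C\dot\mu\|\lambda_\Phi\|^2_{H^{s+\frac34,0}_\Psi}+\dot\mu\|\mathcal U_\Phi\|^2_{H^{s+\frac14,0}_\Psi}.
\]
For $A_7$ we integrate by parts in $y$ once; the boundary term vanishes because $\partial_y\theta|_{y=0}=0$. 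This gives
\[
|A_7|\le\|\partial_y\theta_\Phi\|_{H^{s+1,0}_\Psi}\big(\|\partial_y\mathcal U_\Phi\|+2\|\partial_y\Psi\mathcal U_\Phi\|\big),
\]
and Young's inequality yields $\frac{\theta^E}{6}\|\partial_y\mathcal U_\Phi\|^2+\frac{5}{2\theta^E}\|\partial_y\theta_\Phi\|^2_{H^{s+1,0}_\Psi}$ plus a share of $\theta^E\|\partial_y\Psi\mathcal U_\Phi\|^2$. The constants must be balanced so that the three $\|\partial_y\Psi\mathcal U_\Phi\|^2$ pieces sum to at most $\theta^E$, matching Lemma \ref{lem:leftofmathcalU}. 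For $A_6$ we use Bony's decomposition \eqref{eq:bony}. The dominant piece is $T_{\partial_yu}\partial_x\partial_yu$, bounded by
\[
\|\partial_yu_\Phi\|_{H^{\frac12+}}\|\partial_yu_\Phi\|_{H^{s+1,0}_\Psi}\|\mathcal U_\Phi\|_{H^{s,0}_\Psi}\le\eta\|\partial_yu_\Phi\|^2_{H^{s+1,0}_\Psi}+C_\eta\dot\mu\|\mathcal U_\Phi\|^2.
\]
Here the $L^\infty_y$ norm of $\partial_yu$ comes from $\partial_y^2u$ via Lemma \ref{lem:Poincare}, which appears in $\dot\mu$. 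For $A_5$ we bound $\int_0^y\mathcal U$ in $L^\infty_y$ by Cauchy--Schwarz against $e^{-2\Psi}$, by $(1+t)^{1/4}\|\mathcal U_\Phi\|_{H^{s,0}_\Psi}$, and take $\partial_x\partial_yu$ in $L^2_y H^{\frac12+}$.

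\textbf{Main obstacle.} The hardest point is the bookkeeping: every coefficient must land exactly in a norm listed in \eqref{eq:mu}, and the $\partial_y\Psi$ terms must not exceed $\theta^E\|\partial_y\Psi\mathcal U_\Phi\|^2$. The most delicate piece is $A_2$/$A_8$, because $v$ loses an $x$-derivative, and $L^\infty_y$ control of $v$ requires the weighted integral bounds described above.
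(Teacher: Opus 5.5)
Most of your plan is the paper's proof: $A_1$ via Lemma \ref{lem:commutator2} and the $\langle D_x\rangle^s$, $T_u-(T_u)^*$ commutators; $A_4$ via $\|\lambda_\Phi\|_{H^{s+\frac34,0}_\Psi}\|\mathcal U_\Phi\|_{H^{s+\frac14,0}_\Psi}$ and $\dot\mu\ge1$; $A_5$ with $\int_0^y\mathcal U_\Phi\,d\tilde y$ in $L^\infty_y$; $A_6$ by Bony's decomposition and Young; and one integration by parts in $A_7$. The gap is in the bookkeeping you yourself single out: as you set it up, it does not close.

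\textbf{Why the accounting fails.} Start from your own bound $|A_7|\le\|\partial_y\theta_\Phi\|_{H^{s+1,0}_\Psi}\big(\|\partial_y\mathcal U_\Phi\|_{H^{s,0}_\Psi}+2\|\partial_y\Psi\,\mathcal U_\Phi\|_{H^{s,0}_\Psi}\big)$. The fixed constants $\frac16\theta^E$ and $\frac{5}{2\theta^E}$ in \eqref{3.9} leave no room. Putting $\frac16\theta^E$ on $\|\partial_y\mathcal U_\Phi\|^2$ costs $\frac{3}{2\theta^E}\|\partial_y\theta_\Phi\|^2_{H^{s+1,0}_\Psi}$. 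That leaves only $\frac{1}{\theta^E}$ for the second product. Since $2ab\le\beta a^2+\beta'b^2$ requires $\beta\beta'\ge1$, you get $\beta\ge\theta^E$. Optimizing jointly over both products gives the same conclusion, and the $\eta$- and $\zeta$-terms of \eqref{3.9} are small and give no real slack. So $A_7$ alone needs all of $\theta^E\|\partial_y\Psi\,\mathcal U_\Phi\|^2_{H^{s,0}_\Psi}$.

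You also assign $\frac{\theta^E}{2}$ to the weight term of $A_2$ and another $\frac{\theta^E}{2}$ to that of $A_3$, so the total becomes $2\theta^E\|\partial_y\Psi\,\mathcal U_\Phi\|^2_{H^{s,0}_\Psi}$. That is not \eqref{3.9}, and it cannot be repaired afterwards. Lemma \ref{lem:leftofmathcalU} supplies exactly $\theta^E\|\partial_y\Psi\,\mathcal U_\Phi\|^2$ on the left. Lemma \ref{lem:Psi} only converts the surplus into a fixed, non-small multiple of $\|\partial_y\mathcal U_\Phi\|^2_{H^{s,0}_\Psi}$, which is not among the terms allowed in \eqref{3.9}.

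\textbf{How to repair it.} The weight terms of $A_2$ and $A_3$ must never carry an $O(1)$ coefficient in front of $\|\partial_y\Psi\,\mathcal U_\Phi\|^2$.

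In $A_3$, the term $-2\langle\partial_y\Psi[T_\theta\partial_y\mathcal U]_\Phi,\mathcal U_\Phi\rangle_{H^{s,0}_\Psi}$ carries the small coefficient $\theta$. You can bound it by $C\|\theta_\Phi\|_{L^\infty_y(H^{\frac12+}_x)}\|\partial_y\mathcal U_\Phi\|_{H^{s,0}_\Psi}\|\partial_y\Psi\,\mathcal U_\Phi\|_{H^{s,0}_\Psi}$ and apply Lemma \ref{lem:Psi} to get $C\zeta(1+t)^{\frac14}\|\partial_y\mathcal U_\Phi\|^2_{H^{s,0}_\Psi}$. Alternatively, as the paper does, put $\partial_y\Psi$ on the coefficient. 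Then estimate $\|\partial_y\Psi\,\theta_\Phi\|_{L^\infty_y(H^{\frac12+}_x)}$ by Lemmas \ref{lem:Poincare} and \ref{lem:Psi}, using $\partial_y^2\Psi=\frac{1}{8\theta^E(1+t)}$, and finish with Young in $\eta$.

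In $A_2$ (and $A_9$), do not integrate by parts at all; it buys nothing there. Removing the leftover $\langle[T_v\mathcal U]_\Phi,\partial_y\mathcal U_\Phi\rangle_{H^{s,0}_\Psi}$ would require a symmetrization. Its errors still contain $\partial_y\mathcal U_\Phi$: they come from $T_v-(T_v)^*$, from $[\langle D_x\rangle^s;T_v]$, and from commuting $e^{\Phi(t,D_x)}$ past $T_v$. The last of these is not covered by Lemma \ref{lem:commutator2}, since there is no $\partial_x$.

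The paper instead substitutes $v=\partial_y\theta+\int_0^y(\partial_yu)^2d\tilde y-\int_0^y\partial_xu\,d\tilde y$. It bounds each piece via Lemma \ref{lem:paraestimate} by $C\|\cdot\|_{L^\infty_y(H^{\frac12+}_x)}\|\partial_y\mathcal U_\Phi\|_{H^{s,0}_\Psi}\|\mathcal U_\Phi\|_{H^{s,0}_\Psi}$. Young then gives $\eta\|\partial_y\mathcal U_\Phi\|^2_{H^{s,0}_\Psi}+C_\eta\dot\mu(t)\|\mathcal U_\Phi\|^2_{H^{s,0}_\Psi}$.

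With these changes the whole of $\theta^E\|\partial_y\Psi\,\mathcal U_\Phi\|^2_{H^{s,0}_\Psi}$ is left for $A_7$, and your argument yields \eqref{3.9} with the stated constants.
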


\begin{proof}
(1) \underline{Estimate of $A_1$}. We write
    \begin{equation}
        \begin{aligned}
            A_1=&-\langle [T_u\partial_x\mathcal{U}]_\Phi,\mathcal{U}_\Phi\rangle_{H^{s,0}_\Psi}\\
            =&-\langle [T_u\partial_x\mathcal{U}]_\Phi-T_u\partial_x\mathcal{U}_\Phi,\mathcal{U}_\Phi\rangle_{H^{s,0}_\Psi}-\langle T_u\partial_x\mathcal{U}_\Phi,\mathcal{U}_\Phi\rangle_{H^{s,0}_\Psi}\\
            \triangleq&A_{1,1}+A_{1,2}.
        \end{aligned}
    \end{equation}
    By using  Lemmas \ref{lem:commutator2} and \ref{lem:Poincare}, we get 
    \begin{equation}
        \begin{aligned}
        |A_{1,1}|&\leq \|[T_u\partial_x\mathcal{U}]_\Phi-T_u\partial_x\mathcal{U}_\Phi\|_{H^{s-\frac{1}{4},0}_\Psi}\|\mathcal{U}_\Phi\|_{H^{s+\frac{1}{4},0}_\Psi}\leq C\|u_\Phi\|_{L_y^\infty(H_x^{\frac{3}{2}+})}\|\mathcal{U}_\Phi\|_{H^{s+\frac{1}{4},0}_\Psi}^2\\
        &\leq C(1+t)^\frac{1}{4}\|\partial_yu_\Phi\|_{H^{\frac{3}{2}+,0}_\Psi}\|\mathcal{U}_\Phi\|_{H^{s+\frac{1}{4},0}_\Psi}^2.
        \end{aligned}    
    \end{equation}
    By  a standard commutator's argument, we write
    \begin{equation}
        \begin{aligned}
            A_{1,2}=&-\langle\langle D_x\rangle^s\mathcal{U}_\Phi,[\langle D_x\rangle^s;T_u]\partial_x\mathcal{U}_\Phi\rangle_{L_\Psi^2}+\frac{1}{2}\langle\langle D_x\rangle^s\mathcal{U}_\Phi,((T_u)^*-T_u)\langle D_x\rangle^s\partial_x\mathcal{U}_\Phi\rangle_{L_\Psi^2}\\
            &-\frac{1}{2}\langle\langle D_x\rangle^s\mathcal{U}_\Phi,[T_u;\partial_x]\langle D_x\rangle^s\mathcal{U}_\Phi\rangle_{L_\Psi^2}.
        \end{aligned}
    \end{equation}
    By using the commutator estimates given in Lemma \ref{lem:commutator}, and Lemma \ref{lem:Poincare}, we obtain
    \begin{equation}
        |A_{1,2}|\leq C(1+t)^\frac{1}{4}\|\partial_yu_\Phi\|_{H^{\frac{3}{2}+,0}_\Psi}\|\mathcal{U}_\Phi\|_{H^{s,0}_\Psi}^2.
    \end{equation}
    Summarizing the above estimates and noting \eqref{eq:mu}, we conclude that
    \begin{equation}
        |A_1|\leq C(1+t)^\frac{1}{4}\|\partial_yu_\Phi\|_{H^{\frac{3}{2}+,0}_\Psi}\|\mathcal{U}_\Phi\|_{H^{s+\frac{1}{4},0}_\Psi}^2\leq C\dot{\mu}(t)\|\mathcal{U}_\Phi\|_{H^{s+\frac{1}{4},0}_\Psi}^2.
    \end{equation}
    
    (2) \underline{Estimate of $A_2$}. By using the third equation given in  \eqref{eq:main}, we get that
    \begin{equation}\label{3.15}
        \begin{aligned}
            A_2=&-\langle [T_v\partial_y\mathcal{U}]_\Phi,\mathcal{U}_\Phi\rangle_{H^{s,0}_\Psi}\\
            =&\langle [T_{\int_0^y\partial_xud\tilde{y}}\partial_y\mathcal{U}]_\Phi,\mathcal{U}_\Phi\rangle_{H^{s,0}_\Psi}-\langle [T_{\partial_y\theta}\partial_y\mathcal{U}]_\Phi,\mathcal{U}_\Phi\rangle_{H^{s,0}_\Psi}-\langle [T_{\int_{0}^{y}(\partial_yu)^2d\tilde{y}}\partial_y\mathcal{U}]_\Phi,\mathcal{U}_\Phi\rangle_{H^{s,0}_\Psi}\\
            \triangleq&A_{2,1}+A_{2,2}+A_{2,3}.
        \end{aligned}
    \end{equation}
    It follows by using Lemmas \ref{lem:paraestimate} and \ref{lem:Poincare} that
    \begin{equation}\label{3.16}
        \begin{aligned}
        |A_{2,1}|\leq& C\|\int_0^y\partial_xu_\Phi d\tilde{y}\|_{L_y^\infty(H_x^{\frac{1}{2}+})}\|\partial_y\mathcal{U}_\Phi\|_{H^{s,0}_\Psi}\|\mathcal{U}_\Phi\|_{H^{s,0}_\Psi}\\
        \leq& C(1+t)^\frac{1}{4}\|u_\Phi\|_{H^{\frac{3}{2}+,0}_\Psi}\|\partial_y\mathcal{U}_\Phi\|_{H^{s,0}_\Psi}\|\mathcal{U}_\Phi\|_{H^{s,0}_\Psi}\\
        \leq& C_\eta(1+t)^\frac{1}{2}\|u_\Phi\|_{H^{\frac{3}{2}+,0}_\Psi}^2\|\mathcal{U}_\Phi\|_{H^{s,0}_\Psi}^2+\eta\|\partial_y\mathcal{U}_\Phi\|_{H^{s,0}_\Psi}^2.
        \end{aligned}
    \end{equation}
    Similarly, we have
    \begin{equation}\label{ineq:A22}
        |A_{2,2}|\leq C_\eta(1+t)^\frac{1}{2}\|\partial_y^2\theta_\Phi\|_{H^{\frac{1}{2}+,0}_\Psi}^2\|\mathcal{U}_\Phi\|_{H^{s,0}_\Psi}^2+\eta\|\partial_y\mathcal{U}_\Phi\|_{H^{s,0}_\Psi}^2.
    \end{equation}
    For $A_{2,3}$ given in \eqref{3.15}, thanks to Minkowski's  inequality, we have
    \begin{equation}\label{3.18}
        \begin{aligned}
        |A_{2,3}|\leq& C\|\int_0^y[(\partial_yu)^2]_\Phi d\tilde{y}\|_{L_y^\infty(H_x^{\frac{1}{2}+})}\|\mathcal{U}_\Phi\|_{H^{s,0}_\Psi}\|\partial_y\mathcal{U}_\Phi\|_{H^{s,0}_\Psi}\\
        \leq& C\int_0^\infty\|[(\partial_yu)^2]_\Phi\|_{H_x^{\frac{1}{2}+}}d\tilde{y}\|\mathcal{U}_\Phi\|_{H^{s,0}_\Psi}\|\partial_y\mathcal{U}_\Phi\|_{H^{s,0}_\Psi}.
        \end{aligned}
    \end{equation}
    By applying Bony's decomposition to $(\partial_yu)^2$, we deduce from Lemma \ref{lem:paraestimate} that
    \begin{equation}
        \|[(\partial_yu)^2]_\Phi\|_{H_x^{\frac{1}{2}+}}\leq 2\|[T_{\partial_yu}\partial_yu]_\Phi\|_{H_x^{\frac{1}{2}+}}+\|[R(\partial_yu,\partial_yu)]_\Phi\|_{H_x^{\frac{1}{2}+}}^2\leq C\|\partial_yu_\Phi\|_{H_x^{\frac{1}{2}+}}^2,
    \end{equation}
    thus from \eqref{3.18} we get that
    \begin{equation}\label{3.20}
        |A_{2,3}|\leq C_\eta\|\partial_yu_\Phi\|_{H^{\frac{1}{2}+,0}_\Psi}^4\|\mathcal{U}_\Phi\|_{H^{s,0}_\Psi}^2+\eta\|\partial_y\mathcal{U}_\Phi\|_{H^{s,0}_\Psi}^2.
    \end{equation}
    Summarizing the above estimates \eqref{3.16}-\eqref{3.20}, it follows that
    \begin{equation}
        |A_2|\leq C_\eta\dot{\mu}(t)\|\mathcal{U}_\Phi\|_{H^{s,0}_\Psi}^2+\eta\|\partial_y\mathcal{U}_\Phi\|_{H^{s,0}_\Psi}^2,
    \end{equation}
with $\mu(t)$ being defined in \eqref{eq:mu}.

     (3) \underline{Estimate of $A_3$}.
    By using integration by parts and the boundary condition $\partial_y\mathcal{U}|_{y=0}=0$, we have
    \begin{equation}
        \begin{aligned}
            A_3=&\langle [T_\theta\partial_y^2\mathcal{U}]_\Phi,\mathcal{U}_\Phi\rangle_{H^{s,0}_\Psi}\\
            =&-\langle [T_{\partial_y\theta}\partial_y\mathcal{U}]_\Phi,\mathcal{U}_\Phi\rangle_{H^{s,0}_\Psi}-\langle [T_\theta\partial_y\mathcal{U}]_\Phi,\partial_y\mathcal{U}_\Phi\rangle_{H^{s,0}_\Psi}-2\langle \partial_y\Psi[T_\theta\partial_y\mathcal{U}]_\Phi,\mathcal{U}_\Phi\rangle_{H^{s,0}_\Psi}\\
            \triangleq&A_{3,1}+A_{3,2}+A_{3,3}.
        \end{aligned}
    \end{equation}
    The term $A_{3,1}$ is the same as $A_{2.2}$ given in \eqref{3.15}, so it can be estimated by the right hand side of \eqref{ineq:A22}. 
    
    
By using Lemmas \ref{lem:paraestimate}, \ref{lem:Poincare}, and Assumption \ref{assumption}, it follows that     
\begin{equation}
        |A_{3,2}|\leq C\|\theta_\Phi\|_{L_y^\infty(H_x^{\frac{1}{2}+})}\|\partial_y\mathcal{U}_\Phi\|_{H^{s,0}_\Psi}^2\leq C(1+t)^\frac{1}{4}\|\partial_y\theta_\Phi\|_{H^{s,0}_\Psi}\|\partial_y\mathcal{U}_\Phi\|_{H^{s,0}_\Psi}^2\leq C\zeta(1+t)^\frac{1}{4}\|\partial_y\mathcal{U}_\Phi\|_{H^{s,0}_\Psi}^2.
    \end{equation}
On the other hand, by using Lemmas \ref{lem:paraestimate}, \ref{lem:Poincare} and \ref{lem:Psi}, one gets that
    \begin{equation}
        \begin{aligned}
        |A_{3,3}|\leq& C\|\partial_y\Psi\theta_\Phi\|_{L_y^\infty(H_x^{\frac{1}{2}+})}\|\partial_y\mathcal{U}_\Phi\|_{H^{s,0}_\Psi}\|\mathcal{U}_\Phi\|_{H^{s,0}_\Psi}\\
        \leq& C(1+t)^\frac{1}{4}(\|\partial_y\Psi\partial_y\theta_\Phi\|_{H^{\frac{1}{2}+,0}_\Psi}+(1+t)^{-1}\|\theta_\Phi\|_{H^{\frac{1}{2}+,0}_\Psi})\|\partial_y\mathcal{U}_\Phi\|_{H^{s,0}_\Psi}\|\mathcal{U}_\Phi\|_{H^{s,0}_\Psi}\\
        \leq& C_\eta\left((1+t)^\frac{1}{2}\|\partial_y^2\theta_\Phi\|_{H^{\frac{1}{2}+,0}_\Psi}^2+(1+t)^{-\frac{3}{2}}\|\theta_\Phi\|_{H^{\frac{1}{2}+,0}_\Psi}^2\right)\|\mathcal{U}_\Phi\|_{H^{s,0}_\Psi}^2+\eta\|\partial_y\mathcal{U}_\Phi\|_{H^{s,0}_\Psi}^2.
        \end{aligned}
    \end{equation}
   Summarizing the above estimates of $A_{3,1}, A_{3,2}$ and $A_{3,3}$, we conclude that
    \begin{equation}
        |A_3|\leq C_\eta\dot{\mu}(t)\|\mathcal{U}_\Phi\|_{H^{s,0}_\Psi}^2+(\eta+C\zeta(1+t)^\frac{1}{4})\|\partial_y\mathcal{U}_\Phi\|_{H^{s,0}_\Psi}^2.
    \end{equation}
   
     (4) \underline{Estimate of $A_4$}. For the fourth term given in \eqref{eq:innerofmathcalU}, it is easy to have by applying Holder's inequality that 
    \begin{equation}
        |A_4|\leq \|\lambda_\Phi\|_{H^{s+\frac{3}{4},0}_\Psi}\|\mathcal{U}_\Phi\|_{H^{s+\frac{1}{4},0}_\Psi}\leq C\dot{\mu}(t)(\|\lambda_\Phi\|_{H^{s+\frac{3}{4},0}_\Psi}^2+\|\mathcal{U}_\Phi\|_{H^{s+\frac{1}{4},0}_\Psi}^2).
    \end{equation}

     (5) \underline{Estimate of $A_5$}. For the fifth term given in \eqref{eq:innerofmathcalU},
     by using Lemmas \ref{lem:paraestimate} and \ref{lem:Poincare}, one has
    \begin{equation}
        \begin{aligned}
            |A_5|\leq& C\|\partial_yu_\Phi\|_{H^{\frac{3}{2}+,0}_\Psi}\|\int_0^y\mathcal{U}_\Phi d\tilde{y}\|_{L_y^\infty(H_x^s)}\|\mathcal{U}_\Phi\|_{H^{s,0}_\Psi}\\
            \leq& C(1+t)^\frac{1}{4}\|\partial_yu_\Phi\|_{H^{\frac{3}{2}+,0}_\Psi}\|\mathcal{U}_\Phi\|_{H^{s,0}_\Psi}^2\\
            \leq& C\dot{\mu}(t)\|\mathcal{U}_\Phi\|_{H^{s,0}_\Psi}^2.
        \end{aligned}
    \end{equation}
   
     (6) \underline{Estimate of $A_6$}. To estimate the sixth term given in \eqref{eq:innerofmathcalU}, by applying Bony's decomposition to $\partial_yu\partial_x\partial_yu$, and using Lemmas \ref{lem:paraestimate} and \ref{lem:Poincare}, we have
    \begin{equation}
        \begin{aligned}
            |A_6|\leq& |\langle[T_{\partial_yu}\partial_x\partial_yu]_\Phi,\mathcal{U}_\Phi\rangle_{H^{s,0}_\Psi}|+|\langle[T_{\partial_x\partial_yu}\partial_yu]_\Phi,\mathcal{U}_\Phi\rangle_{H^{s,0}_\Psi}|+|\langle[R(\partial_yu,\partial_x\partial_yu)]_\Phi,\mathcal{U}_\Phi\rangle_{H^{s,0}_\Psi}|\\
            \leq& C(1+t)^\frac{1}{4}\|\partial_y^2u_\Phi\|_{H^{\frac{1}{2}+,0}_\Psi}\|\partial_yu_\Phi\|_{H^{s+1,0}_\Psi}\|\mathcal{U}_\Phi\|_{H^{s,0}_\Psi}+C(1+t)^\frac{1}{4}\|\partial_y^2u_\Phi\|_{H^{\frac{3}{2}+,0}_\Psi}\|\partial_yu_\Phi\|_{H^{s,0}_\Psi}\|\mathcal{U}_\Phi\|_{H^{s,0}_\Psi}\\
            \leq& C_\eta(1+t)^\frac{1}{2}\|\partial_y^2u_\Phi\|_{H^{\frac{3}{2}+,0}_\Psi}^2\|\mathcal{U}_\Phi\|_{H^{s,0}_\Psi}^2+\eta\|\partial_yu_\Phi\|_{H^{s+1,0}_\Psi}^2\\
            \leq& C_\eta\dot{\mu}(t)\|\mathcal{U}_\Phi\|_{H^{s,0}_\Psi}^2+\eta\|\partial_yu_\Phi\|_{H^{s+1,0}_\Psi}^2.
        \end{aligned}
    \end{equation}
   
     (7) \underline{Estimate of $A_7$}.
    By using integration by parts and the boundary condition $\partial_y\theta|_{y=0}=0$, we have
    \begin{equation}
        \begin{aligned}
            A_7=&-\langle\partial_x\partial_y^2\theta_\Phi,\mathcal{U}_\Phi\rangle_{H^{s,0}_\Psi}\\
            =&\langle\partial_x\partial_y\theta_\Phi,\partial_y\mathcal{U}_\Phi\rangle_{H^{s,0}_\Psi}+2\langle\partial_y\Psi\partial_x\partial_y\theta_\Phi,\mathcal{U}_\Phi\rangle_{H^{s,0}_\Psi}\\
            \triangleq&A_{7,1}+A_{7,2}.
        \end{aligned}
    \end{equation}
    Applying Holder's inequality gives
    \begin{equation}
        |A_{7,1}|\leq \|\partial_y\theta_\Phi\|_{H^{s+1,0}_\Psi}\|\partial_y\mathcal{U}_\Phi\|_{H^{s,0}_\Psi}\leq \frac{1}{6}\theta^E\|\partial_y\mathcal{U}_\Phi\|_{H^{s,0}_\Psi}^2+\frac{3}{2\theta^E}\|\partial_y\theta_\Phi\|_{H^{s+1,0}_\Psi}^2.
    \end{equation}
    In the same way as above, we have
    \begin{equation}
        |A_{7,2}|\leq 2\|\partial_y\theta_\Phi\|_{H^{s+1,0}_\Psi}\|\partial_y\Psi\mathcal{U}_\Phi\|_{H^{s,0}_\Psi}\leq \theta^E\|\partial_y\Psi\mathcal{U}_\Phi\|_{H^{s,0}_\Psi}^2+\frac{1}{\theta^E}\|\partial_y\theta_\Phi\|_{H^{s+1,0}_\Psi}^2.
    \end{equation}
    Therefore, we conclude that
    \begin{equation}
        |A_7|\leq \frac{1}{6}\theta^E\|\partial_y\mathcal{U}_\Phi\|_{H^{s,0}_\Psi}^2+\frac{5}{2\theta^E}\|\partial_y\theta_\Phi\|_{H^{s+1,0}_\Psi}^2+\theta^E\|\partial_y\Psi\mathcal{U}_\Phi\|_{H^{s,0}_\Psi}^2.
    \end{equation}

     (8) \underline{Estimates of $A_8$ and $A_9$}.
     Similarly to the estimate of $A_2$, we have 
    \begin{equation}
        \begin{aligned}
            |A_8|=&|\langle T_{\partial_yv}\mathcal{U}_\Phi,\mathcal{U}_\Phi\rangle_{H^{s,0}_\Psi}|\\
            \leq& |\langle T_{\partial_xu}\mathcal{U}_\Phi,\mathcal{U}_\Phi\rangle_{H^{s,0}_\Psi}|+|\langle T_{\partial_y^2\theta}\mathcal{U}_\Phi,\mathcal{U}_\Phi\rangle_{H^{s,0}_\Psi}|+|\langle T_{(\partial_yu)^2}\mathcal{U}_\Phi,\mathcal{U}_\Phi\rangle_{H^{s,0}_\Psi}|\\
            \leq& C\left((1+t)^\frac{1}{4}\|\partial_yu_\Phi\|_{H^{\frac{3}{2}+,0}_\Psi}+(1+t)^\frac{1}{2}\|\partial_y^2\theta_\Phi\|_{H^{\frac{1}{2}+,0}_\Psi}+\|\partial_yu_\Phi\|_{H^{\frac{1}{2}+,0}_\Psi}\|\partial_y^2u_\Phi\|_{H^{\frac{1}{2}+,0}_\Psi}\right)\|\mathcal{U}_\Phi\|_{H^{s,0}_\Psi}^2\\
            \leq& C\dot{\mu}(t)\|\mathcal{U}_\Phi\|_{H^{s,0}_\Psi}^2
        \end{aligned}
    \end{equation}
    and
    \begin{equation}
        |A_9|\leq C_\eta(1+t)^\frac{1}{2}\|\partial_y^2\theta_\Phi\|_{H^{\frac{1}{2}+,0}_\Psi}^2\|\mathcal{U}_\Phi\|_{H^{s,0}_\Psi}^2+\eta\|\partial_y\mathcal{U}_\Phi\|_{H^{s,0}_\Psi}^2\leq C_\eta\dot{\mu}(t)\|\mathcal{U}_\Phi\|_{H^{s,0}_\Psi}^2+\eta\|\partial_y\mathcal{U}_\Phi\|_{H^{s,0}_\Psi}^2.
    \end{equation}
   
    By summing up the above estimates of $A_1,\cdots,A_9$, we obtain the inequality \eqref{3.9} given in this lemma.
\end{proof}
\begin{proof}[Proof of Proposition \ref{prop:mathcalU}]
    Noting that $\mathcal{U}|_{t=0}=0$, by inserting the estimates obtained in Lemmas \ref{lem:leftofmathcalU} and \ref{lem:rightofmathcalU} into the identity \eqref{eq:innerofmathcalU}, and integrating the resulting inequality over $[0,t]$, we obtain the estimate \eqref{ineq:mathcalU} immediately.
\end{proof}

\subsection{Estimate on \texorpdfstring{$\lambda$}{lambda}}
In this subsection, we study the a priori estimate of the function $\lambda$ defined in \eqref{def:lambda}.
\begin{proposition}\label{prop:lambda}
    Let $\lambda$ be the function defined in \eqref{def:lambda}. Then, for any $0<t<T^*$ and $0<\eta<1$, there is a constant $C_\eta$ depending on $\eta$ such that the following estimate holds when the right hand side is finite
    \begin{equation}\label{ineq:lambda}
        \begin{aligned}
            &\|\lambda_\Phi(t)\|_{H^{s+\frac{1}{2},0}_\Psi}^2 
            + 2(\gamma - C_\eta) \|\lambda_\Phi\|_{L_{t,\dot{\mu}}^2(H^{s+\frac{3}{4},0}_\Psi)}^2 
            + (\theta^E - C\zeta (1+t)^{\frac{1}{4}} - 2\eta) \|\partial_y\lambda_\Phi\|_{L_t^2(H^{s+\frac{1}{2},0}_\Psi)}^2 \\
            &\qquad \leq\; 
            \|\partial_xu_\Phi(0)\|_{H^{s+\frac{1}{2},0}_{\Psi_0}}^2 
            + C \|u_\Phi\|_{L_{t,\dot{\mu}}^2(H^{s+\frac{5}{4},0}_\Psi)}^2 
            + (2\eta + C\zeta (1+t)^{\frac{1}{4}}) \|\partial_yu_\Phi\|_{L_t^2(H^{s+\frac{1}{2},0}_\Psi)}^2 \\
            &\qquad\quad 
            + C \|\theta_\Phi\|_{L_{t,\dot{\mu}}^2(H^{s+\frac{5}{4},0}_\Psi)}^2 
            + 2\eta \|\partial_y\theta_\Phi\|_{L_t^2(H^{s+\frac{1}{2},0}_\Psi)}^2 
            + C \|\mathcal{U}_\Phi\|_{L_{t,\dot{\mu}}^2(H^{s+\frac{1}{4},0}_\Psi)}^2 
            + 2\eta \|\partial_y\mathcal{U}_\Phi\|_{L_t^2(H^{s-\frac{1}{2},0}_\Psi)}^2.
        \end{aligned}
    \end{equation}
\end{proposition}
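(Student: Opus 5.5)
I will derive the evolution equation for $\lambda=\partial_xu-T_{\partial_yu}\int_0^y\mathcal{U}\,d\tilde y$ and then repeat the energy scheme of Proposition \ref{prop:mathcalU}, now at the $H^{s+\frac12,0}_\Psi$ level. First, differentiate the $u$-equation in $x$ and write the transport–diffusion part with paraproducts, $u\partial_x=T_u\partial_x+T_{\partial_xu}+R$, and similarly for $v\partial_y$ and $(\theta+\theta^E)\partial_y^2$. This gives
$$\mathcal{L}(\partial_xu)=-T_{\partial_x v}\partial_yu+\text{l.o.t.}$$
Next apply $\partial_y$ to the $u$-equation to get $\mathcal{L}(\partial_yu)$, and compose with $T_{\partial_yu}$ against \eqref{eq:intmathcalU}:
$$\mathcal{L}\Big(T_{\partial_yu}\!\int_0^y\mathcal{U}\Big)=T_{\partial_yu}\mathcal{L}\Big(\int_0^y\mathcal{U}\Big)+T_{\mathcal{L}(\partial_yu)}\int_0^y\mathcal{U}+\text{commutators}.$$
Here $T_{\partial_yu}\mathcal{L}(\int_0^y\mathcal{U})=-T_{\partial_yu}\partial_xv$. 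Up to the paraproduct symmetrization $T_aT_b-T_{ab}$ and $T_{\partial_x v}\partial_yu$ versus $T_{\partial_yu}\partial_xv$ (a Bony reshuffle costing only a remainder), this cancels the top-order term $-v\partial_y\partial_xu$ coming from $\partial_x$ of $v\partial_yu$. This is the cancellation mechanism. What remains in $\mathcal{L}(\lambda)$ has the following forms:
\begin{itemize}
\item terms like $T_{\partial_xu}\partial_xu$, $T_{\partial_x\theta}\partial_y^2u$ and $\partial_x\theta\,\partial_y^2u$-type products;
\item commutator terms $[T_a;T_b]$ and $T_aT_b-T_{ab}$, handled by Lemma \ref{lem:commutator};
\item terms $T_{\partial_y u}T_{\partial_y v}\mathcal{U}$ and $T_{\partial_yu}T_{\partial_y\theta}\partial_y\mathcal{U}$ inherited from \eqref{eq:mathcalU};
\item terms with $\theta$ in the diffusion coefficient, where $\partial_x$ hits $\theta$.
\end{itemize}
The last group gives $\partial_x\theta\,\partial_y^2u$, which is why $\|\theta_\Phi\|_{H^{s+\frac54}}$ appears on the right.

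Then I apply $e^{\Phi(t,D_x)}$ and take the $H^{s+\frac12,0}_\Psi$ inner product with $\lambda_\Phi$. The left side is treated exactly as in Lemma \ref{lem:leftofmathcalU}, giving
$$\tfrac12\tfrac{d}{dt}\|\lambda_\Phi\|^2_{H^{s+\frac12,0}_\Psi}+\gamma\dot\mu\|\lambda_\Phi\|^2_{H^{s+\frac34,0}_\Psi}+\tfrac23\theta^E\|\partial_y\lambda_\Phi\|^2+\theta^E\|\partial_y\Psi\lambda_\Phi\|^2.$$
This needs a boundary condition at $y=0$. Since $u|_{y=0}=0$ and $\int_0^0\mathcal{U}=0$, we have $\lambda|_{y=0}=0$, so the integrations by parts are justified.

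The right-hand terms are bounded in three ways:
\begin{itemize}
\item The transport terms $T_u\partial_x\lambda$, $T_v\partial_y\lambda$ and $T_\theta\partial_y^2\lambda$ are handled as $A_1,A_2,A_3$. Lemma \ref{lem:commutator2} gives the $\dot\mu\|\lambda_\Phi\|^2_{H^{s+\frac34}}$ loss, and $A_{3,2}$ yields the $C\zeta(1+t)^{1/4}\|\partial_y\lambda_\Phi\|^2$ loss using Assumption \ref{assumption}.
\item Lower-order products with one factor at high frequency $s+\frac54$ are split as $\|f\|_{H^{s+\frac54}}\|\lambda\|_{H^{s+\frac34}}$ times a low-norm coefficient absorbed into $\dot\mu$ via \eqref{eq:mu}. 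This produces $C\|u_\Phi\|^2_{L^2_{t,\dot\mu}(H^{s+\frac54})}$, $C\|\theta_\Phi\|^2_{L^2_{t,\dot\mu}(H^{s+\frac54})}$ and $C\|\mathcal{U}_\Phi\|^2_{L^2_{t,\dot\mu}(H^{s+\frac14})}$. For the last one, $T_{\partial_yu}\int_0^y\mathcal U$ with $\partial_x$ applied counts as one derivative extra, which Lemma \ref{lem:Poincare} handles.
\item Terms where a $\partial_y$ lands on $u$, $\theta$ or $\mathcal U$ are treated by Young's inequality, giving the $\eta$ terms. The $\zeta$-weighted $\|\partial_yu_\Phi\|^2$ comes from the $T_{\partial_y\theta}$-type contributions, bounded by the smallness of $\partial_y\theta$.
\end{itemize}
Integrating in time and using $\lambda_\Phi(0)=\partial_xu_\Phi(0)$ (since $\mathcal U(0)=0$) then gives \eqref{ineq:lambda}.

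The main obstacle is verifying the cancellation precisely at the paradifferential level. The term $T_v\partial_y\partial_xu$ would lose a derivative, as would $T_{\partial_xv}\partial_yu$ with $\partial_xv\sim\partial_x^2\int u$. One must check that the product structure matches $T_{\partial_yu}\partial_xv$ up to remainders $R(\partial_yu,\partial_xv)$ and $T_{\partial_xv}\partial_yu - T_{\partial_yu}\partial_x v$-type terms that are genuinely lower order. My first attempt is to write $v\partial_y u$ by Bony's decomposition before differentiating, and to track only the $T_{\partial_yu}v$ piece as top order, with everything else estimated by Lemma \ref{lem:paraestimate} at regularity $s+\frac54$.
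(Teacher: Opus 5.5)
Your overall plan is the paper's: form $\mathcal{L}(\lambda)=\mathcal{L}(\partial_xu)-\mathcal{L}\bigl(T_{\partial_yu}\int_0^y\mathcal{U}\,d\tilde y\bigr)$, take the $H^{s+\frac12,0}_\Psi$ inner product with $\lambda_\Phi$ (using $\lambda|_{y=0}=0$ and $\lambda|_{t=0}=\partial_xu|_{t=0}$), and bound the right side by $\dot\mu$-, $\eta$- and $\zeta$-weighted terms. However, your account of the cancellation, the one non-routine step, is wrong as written.

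\textbf{The term that is cancelled.} It is not $v\,\partial_y\partial_xu$. Its piece $T_v\partial_y\partial_xu$ is simply part of $\mathcal{L}$. The rest, $T_{\partial_x\partial_yu}v+R(\partial_x\partial_yu,v)$, costs only $\|u_\Phi\|_{H^{s+\frac54,0}_\Psi}$; these are the paper's $B_{10},B_{11}$. The derivative loss sits in $\partial_xv\,\partial_yu$, so $\mathcal{L}(\partial_xu)=-\partial_xv\,\partial_yu+\cdots$, not $-T_{\partial_xv}\partial_yu+\cdots$. Within that product the loss is only in $T_{\partial_yu}\partial_xv$, where $\partial_xv\approx-\int_0^y\partial_x^2u$ is at high frequency.

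\textbf{The false reshuffle claim.} You assert that $T_{\partial_xv}\partial_yu$ and $T_{\partial_yu}\partial_xv$ agree up to a lower-order remainder. This is false. Paired with $\lambda_\Phi$ in $H^{s+\frac12,0}_\Psi$, the second needs $\partial_xv$ in $L^\infty_y(H^{s+\frac14}_x)$. That is roughly $\|u_\Phi\|_{H^{s+\frac94,0}_\Psi}$, a full tangential derivative beyond the available $\|u_\Phi\|_{L^2_{t,\dot\mu}(H^{s+\frac54,0}_\Psi)}$. The first needs $\partial_xv$ only in $L^\infty_y(H^{\frac12+}_x)$, whose norms are contained in $\dot\mu$, times $\|\partial_yu_\Phi\|_{H^{s+\frac12,0}_\Psi}$. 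So an attempt to show their difference is lower order would fail. Likewise, neither $T_v\partial_y\partial_xu$ nor $T_{\partial_xv}\partial_yu$ loses a derivative, contrary to your last paragraph.

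\textbf{The fix.} With the correct bookkeeping your ``main obstacle'' disappears. By \eqref{eq:intmathcalU}, $T_{\partial_yu}\mathcal{L}\bigl(\int_0^y\mathcal{U}\,d\tilde y\bigr)=-T_{\partial_yu}\partial_xv$ exactly. Hence $\mathcal{L}(\lambda)$ contains the exact identity $-\partial_xv\,\partial_yu+T_{\partial_yu}\partial_xv=-T_{\partial_xv}\partial_yu-R(\partial_xv,\partial_yu)$, with no reshuffling needed. These two terms are the paper's $B_5,B_6$, bounded by $C_\eta\dot\mu\|\lambda_\Phi\|^2_{H^{s+\frac12,0}_\Psi}+\eta\|\partial_yu_\Phi\|^2_{H^{s+\frac12,0}_\Psi}$. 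Your fallback in the last paragraph is exactly this: Bony-decompose $v\partial_yu$ first and note that $\partial_x(T_{\partial_yu}v)$ contains $T_{\partial_yu}\partial_xv$. Adopt it and drop the reshuffle claim.

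\textbf{Two smaller corrections.}
\begin{enumerate}
\item Composing $T_{\partial_yu}$ with \eqref{eq:intmathcalU} (not \eqref{eq:mathcalU}) produces the terms $[T_u;T_{\partial_yu}]\partial_x\int_0^y\mathcal{U}$, $[T_v;T_{\partial_yu}]\mathcal{U}$, $[T_\theta;T_{\partial_yu}]\partial_y\mathcal{U}$, $2T_{\theta+\theta^E}T_{\partial_y^2u}\mathcal{U}$ and $(T_aT_b-T_{ab})\int_0^y\mathcal{U}$. The extra $\partial_x$ in the first is paid for by the commutator gain of Lemma \ref{lem:commutator}. The Poincar\'e-type bound only converts the $L^\infty_y$ norm of $\int_0^y\mathcal{U}$ into $\|\mathcal{U}_\Phi\|_{H^{s+\frac14,0}_\Psi}$.
\item The $C\zeta(1+t)^{\frac14}\|\partial_yu_\Phi\|^2$ term comes from $T_{\partial_x\theta}\partial_y^2u$ after an integration by parts in $y$. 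There the smallness of $\|\partial_y\theta_\Phi\|_{H^{s,0}_\Psi}$ controls $\partial_x\theta$ in $L^\infty_y(H^{\frac12+}_x)$.
\end{enumerate}
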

\noindent To prove the above estimate we first derive the equation satisfied by $\lambda$. Acting the para-product operator $T_{\partial_y u}$ on \eqref{eq:intmathcalU}, we obtain
\begin{equation}\label{3.36}
    \begin{aligned}
        \mathcal{L}(T_{\partial_yu}\int_0^y\mathcal{U}d\tilde{y})=&T_{\partial_t\partial_yu}\int_0^y\mathcal{U}d\tilde{y}+T_uT_{\partial_x\partial_yu}\int_0^y\mathcal{U}d\tilde{y}+T_vT_{\partial_y^2u}\int_0^y\mathcal{U}d\tilde{y}-T_{(\theta+\theta^E)}T_{\partial^3_yu}\int_0^y\mathcal{U}d\tilde{y}\\
        &+T_{\partial_yu}\partial_t\int_0^y\mathcal{U}d\tilde{y}+T_uT_{\partial_yu}\partial_x\int_0^y\mathcal{U}d\tilde{y}+T_vT_{\partial_yu}\partial_y\int_0^y\mathcal{U}d\tilde{y}\\
        &-T_{(\theta+\theta^E)}T_{\partial_yu}\partial_y^2\int_0^y\mathcal{U}d\tilde{y}-2T_{(\theta+\theta^E)}T_{\partial_y^2u}\mathcal{U}\\
        =&T_{[\partial_t+u\partial_x+v\partial_y-(\theta+\theta^E)\partial_y^2]\partial_yu}\int_0^y\mathcal{U}d\tilde{y}\\
        &
        +[(T_uT_{\partial_x\partial_yu}-T_{u\partial_x\partial_yu})+(T_vT_{\partial_y^2u}-T_{v\partial_y^2u})-(T_\theta T_{\partial^3_yu}-T_{\theta\partial^3_yu})]\int_0^y\mathcal{U}d\tilde{y}
        \\
        &+T_{\partial_yu}(\partial_t+T_u\partial_x+T_v\partial_y-T_{\theta+\theta^E}\partial_y^2)\int_0^y\mathcal{U}d\tilde{y}\\
        &+[T_u;T_{\partial_yu}]\partial_x\int_0^y\mathcal{U}d\tilde{y}+[T_v;T_{\partial_yu}]\mathcal{U}-[T_\theta;T_{\partial_yu}]\partial_y\mathcal{U}-2T_{(\theta+\theta^E)}T_{\partial_y^2u}\mathcal{U}\\
        =&-T_{\partial_yu}\partial_xv+T_{(-\partial_yu\partial_xu-\partial_yv\partial_yu+\partial_y\theta\partial_y^2u)}\int_0^y\mathcal{U}d\tilde{y}\\
        &+[(T_uT_{\partial_x\partial_yu}-T_{u\partial_x\partial_yu})+(T_vT_{\partial_y^2u}-T_{v\partial_y^2u})-(T_\theta T_{\partial^3_yu}-T_{\theta\partial^3_yu})]\int_0^y\mathcal{U}d\tilde{y}\\
        &+[T_u;T_{\partial_yu}]\partial_x\int_0^y\mathcal{U}d\tilde{y}+[T_v;T_{\partial_yu}]\mathcal{U}-[T_\theta;T_{\partial_yu}]\partial_y\mathcal{U}-2T_{(\theta+\theta^E)}T_{\partial_y^2u}\mathcal{U}.
    \end{aligned}
\end{equation}
By applying $\partial_x$ to the first equation of \eqref{eq:main}, we obtain
\begin{equation}\label{3.37}
    \begin{aligned}
        \mathcal{L}(\partial_xu)=&-(\partial_xu)^2-\partial_xv\partial_yu+\partial_x\theta\partial_y^2u-T_{\partial_x^2u}u-R(\partial_x^2u,u)\\
        &-T_{\partial_x\partial_yu}v-R(\partial_x\partial_yu,v)-T_{\partial_x\partial_y^2u}\theta-R(\partial_x\partial_y^2u,\theta).
    \end{aligned}
\end{equation}
Noting that $\lambda=\partial_xu-T_{\partial_yu}\int_0^y\mathcal{U}d\tilde{y}$, by combining the above two equations we obtain that
\begin{equation}\label{eq:lambda}
    \begin{aligned}
        \mathcal{L}(\lambda)=&-(\partial_xu)^2-T_{\partial_xv}\partial_yu-R(\partial_xv,\partial_yu)+\partial_x\theta\partial_y^2u-T_{\partial_x^2u}u-R(\partial_x^2u,u)\\
        &-T_{\partial_x\partial_yu}v-R(\partial_x\partial_yu,v)-T_{\partial_x\partial_y^2u}\theta-R(\partial_x\partial_y^2u,\theta)-T_{(-\partial_yu\partial_xu-\partial_yv\partial_yu+\partial_y\theta\partial_y^2u)}\int_0^y\mathcal{U}d\tilde{y}\\
        &-[(T_uT_{\partial_x\partial_yu}-T_{u\partial_x\partial_yu})+(T_vT_{\partial_y^2u}-T_{v\partial_y^2u})-(T_\theta T_{\partial^3_yu}-T_{\theta\partial^3_yu})]\int_0^y\mathcal{U}d\tilde{y}\\
        &-[T_u;T_{\partial_yu}]\partial_x\int_0^y\mathcal{U}d\tilde{y}-[T_v;T_{\partial_yu}]\mathcal{U}+[T_\theta;T_{\partial_yu}]\partial_y\mathcal{U}+2T_{(\theta+\theta^E)}T_{\partial_y^2u}\mathcal{U}.
    \end{aligned}
\end{equation}
By applying the operator $e^{\Phi(t,D_x)}$ to \eqref{eq:lambda} and taking $H^{s+\frac{1}{2},0}_\Psi$ inner product with $\lambda_\Phi$, it follows

\begin{equation}\label{eq:inneroflambda}
    \begin{aligned}
        &\langle \partial_t\lambda_\Phi,\lambda_\Phi\rangle_{H^{s+\frac{1}{2},0}_\Psi}+\gamma\langle\dot{\mu}\langle D_x\rangle^\frac{1}{2}\lambda_\Phi,\lambda_\Phi\rangle_{H^{s+\frac{1}{2},0}_\Psi}-\theta^E\langle\partial_y^2\lambda_\Phi,\lambda_\Phi\rangle_{H^{s+\frac{1}{2},0}_\Psi}\\
        &=-\langle [T_u\partial_x\lambda]_\Phi,\lambda_\Phi\rangle_{H^{s+\frac{1}{2},0}_\Psi}-\langle [T_v\partial_y\lambda]_\Phi,\lambda_\Phi\rangle_{H^{s+\frac{1}{2},0}_\Psi}+\langle [T_\theta\partial_y^2\lambda]_\Phi,\lambda_\Phi\rangle_{H^{s+\frac{1}{2},0}_\Psi}+\langle[\mathcal{L}(\lambda)]_\Phi,\lambda_\Phi\rangle_{H^{s+\frac{1}{2},0}_\Psi}\\
        &\triangleq R.
    \end{aligned}
\end{equation}

For the left hand side of \eqref{eq:inneroflambda}, similar  to Lemma \ref{lem:leftofmathcalU}, one has

\begin{lemma}\label{lem:leftoflambda}
    For any $0<t<T^*$, the following estimate holds:
    \begin{equation}
        \begin{aligned}
            &\langle\partial_t\lambda_\Phi,\lambda_\Phi\rangle_{H^{s+\frac{1}{2},0}_\Psi}+\gamma\langle\dot{\mu}(t)\langle D_x\rangle^\frac{1}{2}\lambda_\Phi,\lambda_\Phi\rangle_{H^{s+\frac{1}{2},0}_\Psi}-\theta^E\langle\partial_y^2\lambda_\Phi,\lambda_\Phi\rangle_{H^{s+\frac{1}{2},0}_\Psi}\\
             & \geq \frac{1}{2}\frac{d}{dt}\|\lambda_\Phi(t)\|_{H^{s+\frac{1}{2},0}_\Psi}^2+\gamma\dot{\mu}(t)\|\lambda_\Phi\|_{H^{s+\frac{3}{4},0}_\Psi}^2+\frac{1}{2}\theta^E\|\partial_y\lambda_\Phi\|_{H^{s+\frac{1}{2},0}_\Psi}^2.\\
        \end{aligned}
    \end{equation}
\end{lemma}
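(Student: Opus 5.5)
The argument follows the proof of Lemma \ref{lem:leftofmathcalU}, with $\mathcal{U}_\Phi$ replaced by $\lambda_\Phi$ and the index $s$ replaced by $s+\frac{1}{2}$. The only structural difference is the boundary condition at $y=0$.

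\emph{Time derivative term.} Since $\Psi$ depends on $t$, we write
\[
\langle\partial_t\lambda_\Phi,\lambda_\Phi\rangle_{H^{s+\frac{1}{2},0}_\Psi}=\frac{1}{2}\frac{d}{dt}\|\lambda_\Phi\|_{H^{s+\frac{1}{2},0}_\Psi}^2-\langle\partial_t\Psi\,\lambda_\Phi,\lambda_\Phi\rangle_{H^{s+\frac{1}{2},0}_\Psi}.
\]

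\emph{Gevrey term.} Because $\langle D_x\rangle^{\frac{1}{2}}$ is a positive self-adjoint Fourier multiplier in $x$, it gives exactly $\gamma\dot{\mu}\|\lambda_\Phi\|_{H^{s+\frac{3}{4},0}_\Psi}^2$.

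\emph{Diffusion term.} Integrating by parts in $y$ gives
\[
-\theta^E\langle\partial_y^2\lambda_\Phi,\lambda_\Phi\rangle=\theta^E\|\partial_y\lambda_\Phi\|^2+2\theta^E\langle\partial_y\Psi\,\partial_y\lambda_\Phi,\lambda_\Phi\rangle+\theta^E\,\mathrm{Re}\int_{\R}\langle D_x\rangle^{s+\frac{1}{2}}\partial_y\lambda_\Phi\,\overline{\langle D_x\rangle^{s+\frac{1}{2}}\lambda_\Phi}\Big|_{y=0}dx .
\]
The boundary integral vanishes because $\lambda|_{y=0}=0$. Indeed, $u|_{y=0}=0$ gives $\partial_xu|_{y=0}=0$, and $\int_0^y\mathcal{U}\,d\tilde y$ vanishes at $y=0$. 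Since $T_{\partial_yu}$ acts only in $x$, the paraproduct term vanishes there too. Hence $\lambda_\Phi|_{y=0}=0$.

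\emph{Weight cross term.} Young's inequality with parameter $\kappa$ bounds it by
\[
2\theta^E|\langle\partial_y\Psi\,\partial_y\lambda_\Phi,\lambda_\Phi\rangle|\le \tfrac{1}{\kappa}\theta^E\|\partial_y\lambda_\Phi\|^2+\kappa\theta^E\|\partial_y\Psi\,\lambda_\Phi\|^2 .
\]
Combining this with \eqref{eq:weightpro}, $-\partial_t\Psi=4\theta^E(\partial_y\Psi)^2$, the lower bound becomes
\[
\frac{1}{2}\frac{d}{dt}\|\lambda_\Phi\|^2+\Big(1-\frac{1}{\kappa}\Big)\theta^E\|\partial_y\lambda_\Phi\|^2+(4-\kappa)\theta^E\|\partial_y\Psi\,\lambda_\Phi\|^2+\gamma\dot{\mu}\|\lambda_\Phi\|_{H^{s+\frac{3}{4},0}_\Psi}^2 .
\]

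\emph{Choice of $\kappa$.} Take $\kappa=2$. The coefficient of $\|\partial_y\lambda_\Phi\|^2$ is then $\frac{1}{2}\theta^E$, and the weight term has coefficient $2\theta^E\ge0$, so it can be dropped. This gives the stated inequality.

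\emph{Main obstacle.} The only delicate point is the vanishing of the boundary term, i.e. checking $\lambda|_{y=0}=0$. In Lemma \ref{lem:leftofmathcalU} this role was played by the Neumann condition $\partial_y\mathcal{U}|_{y=0}=0$; here it is the Dirichlet condition on $\lambda$ verified above. Everything else is identical to the earlier lemma.
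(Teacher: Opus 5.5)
Your proposal is correct and follows the route the paper indicates, since the paper simply refers back to the proof of Lemma \ref{lem:leftofmathcalU}: integrate by parts in $y$, absorb the weight cross term by Young's inequality with a parameter $\kappa$, and use $\partial_t\Psi+4\theta^E(\partial_y\Psi)^2=0$. Your explicit check that $\lambda|_{y=0}=0$ kills the boundary term (in place of the Neumann condition used for $\mathcal{U}$) fills in a step the paper leaves implicit, and your choice $\kappa=2$ gives exactly $\frac12\theta^E$, whereas the paper's $\kappa=3$ would give the stronger constant $\frac23\theta^E$.
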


For the terms given on the right hand side of \eqref{eq:inneroflambda}, we have the following lemma, whose proof shall be given  in Appendix \ref{app:A}.

\begin{lemma}\label{lem:rightoflambda}
    For any $0<t<T^*$ and $\eta>0$, the term $R$ given on the right-hand side of \eqref{eq:inneroflambda} satisfies the following estimate:
    \begin{equation}
        \begin{aligned}
           |R|\leq& C_\eta\dot{\mu}(t)\|\lambda_\Phi\|_{H^{s+\frac{3}{4},0}_\Psi}^2+(\eta+C\zeta(1+t)^\frac{1}{4})\|\partial_y\lambda_\Phi\|_{H^{s+\frac{1}{2},0}_\Psi}^2+C\dot{\mu}(t)\|u_\Phi\|_{H^{s+\frac{5}{4},0}_\Psi}^2\\
            &+(\eta+C\zeta(1+t)^\frac{1}{4})\|\partial_yu_\Phi\|_{H^{s+\frac{1}{2},0}_\Psi}^2+C\dot{\mu}(t)\|\theta_\Phi\|_{H^{s+\frac{5}{4},0}_\Psi}^2+\eta\|\partial_y\theta_\Phi\|_{H^{s+\frac{1}{2},0}_\Psi}^2\\
            &+C\dot{\mu}(t)\|\mathcal{U}_\Phi\|_{H^{s+\frac{1}{4},0}_\Psi}^2+\eta\|\partial_y\mathcal{U}_\Phi\|_{H^{s-\frac{1}{2},0}_\Psi}^2.
        \end{aligned}
    \end{equation}
\end{lemma}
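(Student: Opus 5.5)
The statement to prove is Lemma \ref{lem:rightoflambda}, the bound on $R$ in \eqref{eq:inneroflambda}. I would estimate $R$ term by term: first the three transport/diffusion terms, then the terms of $[\mathcal{L}(\lambda)]_\Phi$ listed in \eqref{eq:lambda}. The pairing is in $H^{s+\frac12,0}_\Psi$, and every term is ultimately absorbed by one of three mechanisms. Terms whose regularity only barely exceeds the energy level are split as $H^{s+\frac14}\times H^{s+\frac34}$ and absorbed by $\dot\mu\|\cdot\|^2$, using that $\dot\mu$ in \eqref{eq:mu} dominates the low-norm coefficients. Terms carrying a $y$-derivative are handled with Young's inequality, producing $\eta\|\partial_y\cdot\|^2$. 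Terms with coefficient $\theta$ use Lemma \ref{lem:Poincare} and Assumption \ref{assumption}, giving $C\zeta(1+t)^{1/4}$.

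\textbf{Transport and diffusion terms.} These are treated exactly as $A_1,A_2,A_3$ in Lemma \ref{lem:rightofmathcalU}, with $s$ replaced by $s+\frac12$.
\begin{itemize}
\item For $T_u\partial_x\lambda$, I use the commutator of Lemma \ref{lem:commutator2}. This gives $\dot\mu\|\lambda_\Phi\|_{H^{s+\frac34,0}_\Psi}^2$; the remaining part is controlled by the symmetrization argument of Lemma \ref{lem:commutator}.
\item For $T_v\partial_y\lambda$, I substitute $v$ via the divergence equation and apply Young's inequality.
\item For $T_\theta\partial_y^2\lambda$, I integrate by parts in $y$, which yields $C\zeta(1+t)^{1/4}\|\partial_y\lambda_\Phi\|^2$. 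One point needs checking: the boundary term at $y=0$. I need $\partial_y\lambda|_{y=0}=0$ or a vanishing trace. We have $\partial_y\lambda=\partial_x\partial_yu-T_{\partial_y^2u}\int_0^y\mathcal{U}-T_{\partial_yu}\mathcal{U}$, so this is not automatic. I would handle the boundary contribution via the trace of $\theta$ and the equation at $y=0$, or else, as in Lemma \ref{lem:leftoflambda}, absorb it using the $\frac12\theta^E$ margin.
\end{itemize}

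\textbf{Source terms in $\mathcal{L}(\lambda)$.} I group them as follows.
\begin{itemize}
\item \emph{Products of $u$ alone.} Examples are $(\partial_xu)^2$, $T_{\partial_x^2u}u$, $R(\partial_x^2u,u)$ and $T_{\partial_x\partial_yu}v$. Via Bony's decomposition and Lemma \ref{lem:paraestimate}, each places at most $s+\frac54$ tangential derivatives on the high-frequency factor. The pairing is then bounded by $\|u_\Phi\|_{H^{s+\frac54,0}_\Psi}\|\lambda_\Phi\|_{H^{s+\frac34,0}_\Psi}\times(\text{low norm})$, which is at most $\dot\mu(\|u_\Phi\|^2_{H^{s+\frac54}}+\|\lambda_\Phi\|^2_{H^{s+\frac34}})$.
\item \emph{Terms with a $y$-derivative on the high factor.} For instance, $T_{\partial_xv}\partial_yu$ produces $\eta\|\partial_yu_\Phi\|_{H^{s+\frac12}}^2$. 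The $\theta$-terms $\partial_x\theta\partial_y^2u$ and $T_{\partial_x\partial_y^2u}\theta$ give $\dot\mu\|\theta_\Phi\|^2_{H^{s+\frac54}}$, with a possible $\eta\|\partial_y\theta_\Phi\|^2$ after moving a $y$-derivative.
\item \emph{Terms involving $\mathcal{U}$.} This covers $T_{(\cdots)}\int_0^y\mathcal{U}$, the composition remainders $T_aT_b-T_{ab}$, the commutators $[T_a;T_b]$, and $2T_{(\theta+\theta^E)}T_{\partial_y^2u}\mathcal{U}$. Lemma \ref{lem:commutator} gains one derivative, so these cost only $\|\mathcal{U}_\Phi\|_{H^{s-\frac12}}$ or $\|\mathcal{U}_\Phi\|_{H^{s+\frac14}}$. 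Where $\partial_y\mathcal{U}$ appears, Young's inequality gives $\eta\|\partial_y\mathcal{U}_\Phi\|_{H^{s-\frac12,0}_\Psi}^2$. The integrals $\int_0^y\mathcal{U}$ are bounded in $L^\infty_y$ via Lemma \ref{lem:Poincare}-type Hölder bounds, as in $A_5$.
\end{itemize}

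\textbf{Main obstacle.} The hardest part is the highest-order terms: $-T_{\partial_yu}\partial_xv$, which is partly cancelled, the remainders $R(\partial_x\partial_y^2u,\theta)$ and $T_{\partial_x\partial_y^2u}\theta$, and $[T_\theta;T_{\partial_yu}]\partial_y\mathcal{U}$. In the $R(\partial_xv,\partial_yu)$ piece, $\partial_xv$ contains $\partial_x\partial_y\theta$ and $\int\partial_x^2u$. I must verify that these can always be arranged to have at most $s+\frac54$ derivatives by putting the extra regularity on the low-frequency factor, using the Sobolev-index freedom $s_1+s_2>s+1$ of Lemma \ref{lem:paraestimate}. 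Where that fails, I would integrate by parts in $y$ and pick up $C\zeta(1+t)^{1/4}\|\partial_yu_\Phi\|^2$ from the smallness of $\theta$. Collecting all the bounds gives the stated estimate.
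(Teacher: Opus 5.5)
Your overall strategy is the same as the paper's Appendix~A: Bony decomposition term by term, with each piece absorbed by $\dot\mu$, by Young with $\eta$, or by the smallness $\zeta$. There is, however, one missing observation and one step that fails as written.

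The missing observation concerns the boundary term you worry about. Since $u|_{y=0}=0$ gives $\partial_xu|_{y=0}=0$, and $T_{\partial_yu}\int_0^y\mathcal{U}\,d\tilde y$ vanishes at $y=0$, one has $\lambda|_{y=0}=\lambda_\Phi|_{y=0}=0$. So when you integrate $\langle[T_\theta\partial_y^2\lambda]_\Phi,\lambda_\Phi\rangle_{H^{s+\frac12,0}_\Psi}$ by parts, the trace of $[T_\theta\partial_y\lambda]_\Phi\overline{\lambda_\Phi}$ vanishes identically. Nothing about $\partial_y\lambda|_{y=0}$ is needed. Your proposed remedies are therefore unnecessary, and the second would not work anyway: the trace of $\partial_y\lambda$ at $y=0$ is not controlled by the interior dissipation $\frac12\theta^E\|\partial_y\lambda_\Phi\|^2_{H^{s+\frac12,0}_\Psi}$. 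This Dirichlet condition, in contrast with the Neumann condition $\partial_y\mathcal{U}|_{y=0}=0$ used for $A_3$, is what justifies every $y$-integration by parts against $\lambda_\Phi$, including the one below.

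The step that fails is your treatment of $\partial_x\theta\,\partial_y^2u$. In the paraproduct piece $T_{\partial_x\theta}\partial_y^2u$, the high-frequency factor is $\partial_y^2u$, needed at tangential order $s+\frac12$ (or $s+\frac14$ after splitting). Nothing in the energy hierarchy controls it there: $\partial_y^2u$ is only available in $L^2_t(H^{s,0}_\Psi)$, from Proposition~\ref{prop:partialyu}. So it cannot be bounded by $\dot\mu\|\theta_\Phi\|^2_{H^{s+\frac54,0}_\Psi}+\eta\|\partial_y\theta_\Phi\|^2$; only $T_{\partial_y^2u}\partial_x\theta$ and $R(\partial_x\theta,\partial_y^2u)$ behave that way. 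The paper (term $B_{7,1}$) integrates by parts in $y$, using $\lambda|_{y=0}=0$. By Lemma~\ref{lem:Poincare}, $s>\frac52$ and \eqref{ineq:assumption}, the main piece satisfies
\[
|\langle[T_{\partial_x\theta}\partial_yu]_\Phi,\partial_y\lambda_\Phi\rangle_{H^{s+\frac12,0}_\Psi}|\le C\|\partial_x\theta_\Phi\|_{L^\infty_y(H^{\frac12+}_x)}\|\partial_yu_\Phi\|_{H^{s+\frac12,0}_\Psi}\|\partial_y\lambda_\Phi\|_{H^{s+\frac12,0}_\Psi}\le C\zeta(1+t)^{\frac14}\big(\|\partial_yu_\Phi\|^2_{H^{s+\frac12,0}_\Psi}+\|\partial_y\lambda_\Phi\|^2_{H^{s+\frac12,0}_\Psi}\big).
\]
The pieces where $\partial_y$ hits $\partial_x\theta$ or the weight go into $C_\eta\dot\mu\|\lambda_\Phi\|^2+\eta\|\partial_yu_\Phi\|^2$. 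Both factors in the main piece are dissipation-level norms, so it is not of the form $\dot\mu\times(\text{energy norm})^2$. This is exactly where the term $C\zeta(1+t)^{\frac14}\|\partial_yu_\Phi\|^2_{H^{s+\frac12,0}_\Psi}$ in the lemma comes from.

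You list integration by parts plus smallness of $\theta$ only as a fallback, and attach it to $R(\partial_xv,\partial_yu)$. That term is routine: substitute $v$ as in $A_2$. The other terms you call the main obstacle are likewise routine or absent:
\begin{itemize}
\item $T_{\partial_yu}\partial_xv$ does not appear in \eqref{eq:lambda} at all; it is cancelled exactly by the construction of $\mathcal{U}$.
\item $T_{\partial_x\partial_y^2u}\theta$ and $R(\partial_x\partial_y^2u,\theta)$ put the low-frequency factor into $\dot\mu$ via $\|\partial_y^3u_\Phi\|_{H^{\frac32+,0}_\Psi}$, with $\theta$ at $H^{s+\frac12}$.
\item $[T_\theta;T_{\partial_yu}]\partial_y\mathcal{U}$ is Lemma~\ref{lem:commutator} plus Young.
\end{itemize}
Finally, $T_{\partial_x\partial_yu}v$ is not a pure-$u$ term. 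After substituting $v$, its piece $T_{\partial_x\partial_yu}\partial_y\theta$ is what produces $\eta\|\partial_y\theta_\Phi\|^2_{H^{s+\frac12,0}_\Psi}$.
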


\begin{proof}[Proof of Proposition \ref{prop:lambda}]
    Observing from \eqref{def:lambda} that $\lambda|_{t=0}=\partial_xu|_{t=0}$, by substituting the estimates given in Lemmas \ref{lem:leftoflambda} and \ref{lem:rightoflambda} into \eqref{eq:inneroflambda}, and integrating the resulting inequality over $[0,t]$, we deduce \eqref{ineq:lambda}.
\end{proof}

\subsection{Estimate on \texorpdfstring{$\varphi$}{phi}}
In this subsection, we shall derive the a priori estimate of $\varphi$ defined in \eqref{def:varphi}.
\begin{proposition}\label{prop:phi}
    Let $\varphi$ be the function defined in \eqref{def:varphi}. Then, for any $0<t<T^*$ and $0<\eta<1$, there is a constant $C_\eta$ depending on $\eta$ such that the following estimate holds when the right hand side is finite
    \begin{equation}\label{ineq:phi}
    \begin{aligned}
        &\|\varphi_\Phi(t)\|_{H^{s,0}_\Psi}^2
        + 2(\gamma - C_\eta)\|\varphi_\Phi\|_{L_{t,\dot{\mu}}^2(H^{s+\frac{1}{4},0}_\Psi)}^2
        + \big(\theta^E - C\zeta (1+t)^{\frac{1}{4}} - 2\eta\big) \|\partial_y \varphi_\Phi\|_{L_t^2(H^{s,0}_\Psi)}^2 \\
        &\qquad \leq\;
        \|\partial_x \theta_\Phi(0)\|_{H^{s,0}_{\Psi_0}}^2
        + C \|u_\Phi\|_{L_{t,\dot{\mu}}^2(H^{s+1,0}_\Psi)}^2
        + 2\eta \|\partial_y u_\Phi\|_{L_t^2(H^{s+1,0}_\Psi)}^2
        + C \|\theta_\Phi\|_{L_{t,\dot{\mu}}^2(H^{s+1,0}_\Psi)}^2 \\
        &\qquad\quad
        + \big(2\eta + C\zeta (1+t)^{\frac{1}{4}}\big) \|\partial_y \theta_\Phi\|_{L_t^2(H^{s+1,0}_\Psi)}^2
        + C \|\mathcal{U}_\Phi\|_{L_{t,\dot{\mu}}^2(H^{s,0}_\Psi)}^2
        + 2\eta \|\partial_y \mathcal{U}_\Phi\|_{L_t^2(H^{s-1,0}_\Psi)}^2.
    \end{aligned}
    \end{equation}
\end{proposition}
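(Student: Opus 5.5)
We follow the template of Propositions \ref{prop:mathcalU} and \ref{prop:lambda}. The first step is to derive the equation for $\varphi$. Applying $\partial_x$ to the second equation of \eqref{eq:main} gives $\mathcal{L}(\partial_x\theta)$. The leading term produced there is $-\partial_x v\,\partial_y\theta$, which we split by Bony's decomposition as $-T_{\partial_y\theta}\partial_xv-T_{\partial_xv}\partial_y\theta-R(\partial_xv,\partial_y\theta)$. The remaining terms are the analogues of those in \eqref{3.37}, together with the source $\partial_x[(\theta+\theta^E)(\partial_yu)^2]$.

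Next we act with $T_{\partial_y\theta}$ on \eqref{eq:intmathcalU}, exactly as in \eqref{3.36} with $u$ replaced by $\theta$. This yields
\[
\mathcal{L}\Big(T_{\partial_y\theta}\int_0^y\mathcal{U}d\tilde y\Big)=-T_{\partial_y\theta}\partial_xv+T_{[\partial_t+u\partial_x+v\partial_y-(\theta+\theta^E)\partial_y^2]\partial_y\theta}\int_0^y\mathcal{U}d\tilde y+\text{(commutators)}-2T_{(\theta+\theta^E)}T_{\partial_y^2\theta}\mathcal{U}.
\]
The time derivative $\partial_t\partial_y\theta$ is eliminated by differentiating the $\theta$-equation in $y$. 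Subtracting the two identities cancels the dangerous term $T_{\partial_y\theta}\partial_xv$, which is the only one containing $\partial_x^2 u$ and $\partial_x\partial_y^2\theta$ at top order. What remains for $\mathcal{L}(\varphi)$ consists of:
\begin{itemize}
\item paraproducts and remainders in which the high-frequency factor carries at most one $x$-derivative of $u$, $\theta$, $\partial_yu$ or $\partial_y\theta$;
\item the para-commutators $T_aT_b-T_{ab}$ and $[T_a;T_b]$ applied to $\int_0^y\mathcal{U}$ or $\mathcal{U}$;
\item the new source terms $\partial_x\theta(\partial_yu)^2+2(\theta+\theta^E)\partial_yu\,\partial_x\partial_yu$.
\end{itemize}

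We then apply $e^{\Phi(t,D_x)}$ and take the $H^{s,0}_\Psi$ inner product with $\varphi_\Phi$. This is half a derivative less than for $\lambda$, which matches the right-hand side of \eqref{ineq:phi}, where $u,\theta$ appear in $H^{s+1}$ rather than $H^{s+\frac54}$. The left side is treated as in Lemma \ref{lem:leftofmathcalU} and gives $\frac12\frac{d}{dt}\|\varphi_\Phi\|^2+\gamma\dot\mu\|\varphi_\Phi\|_{H^{s+\frac14,0}_\Psi}^2+\frac12\theta^E\|\partial_y\varphi_\Phi\|^2$. The boundary term $\partial_y\varphi|_{y=0}=0$ follows from $\partial_y\theta|_{y=0}=0$, and $\partial_y\theta=0$ also kills the paraproduct with $\mathcal{U}$ at $y=0$. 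The transport and diffusion terms $T_u\partial_x\varphi$, $T_v\partial_y\varphi$ and $T_\theta\partial_y^2\varphi$ are handled exactly as $A_1$, $A_2$ and $A_3$. We use Lemma \ref{lem:commutator2} for the $\Phi$-commutator and Lemma \ref{lem:Poincare} for the $L^\infty_y$ bounds. The term $T_\theta\partial_y\varphi$ paired with $\partial_y\varphi$ produces the loss $C\zeta(1+t)^{1/4}\|\partial_y\varphi_\Phi\|^2$, via Assumption \ref{assumption}.

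The source terms are estimated term by term with Lemma \ref{lem:paraestimate}, Lemma \ref{lem:commutator} and Lemma \ref{lem:Psi}. In each product we put the low-frequency factor in $L^\infty_y(H^{\frac12+}_x)$ or $L^\infty_y(H^{\frac32+}_x)$; these norms are absorbed into $\dot\mu$ through the definition \eqref{eq:mu}. The high-frequency factor is handled in one of two ways:
\begin{itemize}
\item either by splitting $\|\cdot\|_{H^{s+\frac34}}\|\varphi_\Phi\|_{H^{s+\frac14}}$ against $\dot\mu$, which gives contributions $C\dot\mu\|u_\Phi\|^2_{H^{s+1,0}_\Psi}$, $C\dot\mu\|\theta_\Phi\|^2_{H^{s+1,0}_\Psi}$, $C\dot\mu\|\mathcal{U}_\Phi\|^2_{H^{s,0}_\Psi}$ and $C_\eta\dot\mu\|\varphi_\Phi\|^2_{H^{s+\frac14,0}_\Psi}$;
\item or by Young's inequality with $\eta$, giving $\eta\|\partial_yu_\Phi\|_{H^{s+1,0}_\Psi}^2$ and similar terms.
\end{itemize}
Terms containing $\partial_x\partial_y\theta$ multiplied by $\partial_y\theta$ (for instance from $T_{\partial_xv}\partial_y\theta$ with $\partial_xv$ rewritten via $\partial_y^2\theta$, or from the $T_{\partial_y^2\theta}\mathcal{U}$-type terms) carry the small factor $\zeta$. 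They are bounded by $C\zeta(1+t)^{1/4}\|\partial_y\theta_\Phi\|^2_{H^{s+1,0}_\Psi}$.

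The integrals $\int_0^y\mathcal{U}$ are controlled in $L^\infty_y$ by $(1+t)^{1/4}\|\mathcal{U}_\Phi\|$, as in $A_5$. Factors $\partial_y\mathcal{U}$ appearing in the commutators $[T_\theta;T_{\partial_y\theta}]\partial_y\mathcal{U}$ gain one derivative from Lemma \ref{lem:commutator}, which produces $\eta\|\partial_y\mathcal{U}_\Phi\|^2_{H^{s-1,0}_\Psi}$. Finally we integrate over $[0,t]$, using $\varphi|_{t=0}=\partial_x\theta_0$ because $\mathcal{U}|_{t=0}=0$.

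The main obstacle I expect is the term $-T_{\partial_xv}\partial_y\theta-R(\partial_xv,\partial_y\theta)$, together with the sources involving $\partial_x\partial_yu$ and $\partial_x\theta$. Here $\partial_xv$ contains $\int_0^y\partial_x^2u$ and $\partial_x\partial_y\theta$, so its low-frequency part must be measured in $H^{\frac12+}$ norms that \eqref{eq:mu} actually controls. The high-frequency placements must not exceed $s+1$ derivatives on $u$ and $\theta$. I would check each Bony piece separately, putting the derivative on whichever factor keeps the total regularity within the $H^{s+\frac34}\times H^{s+\frac14}$ pairing.
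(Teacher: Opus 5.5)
\textbf{Gap: the term $\partial_x\theta\,\partial_y^2\theta$.} Your overall route is the paper's. You derive \eqref{eq:phi} by subtracting $\mathcal{L}(T_{\partial_y\theta}\int_0^y\mathcal{U}\,d\tilde y)$ from $\mathcal{L}(\partial_x\theta)$, pair with $\varphi_\Phi$ in $H^{s,0}_\Psi$, and use the lower bound of Lemma \ref{lem:leftofphi} (your check that $\partial_y\varphi|_{y=0}=0$ is correct). You treat the transport terms as $A_1$--$A_3$, bound the rest with $B_j$-type paraproduct estimates, and use $\varphi|_{t=0}=\partial_x\theta|_{t=0}$.

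One step fails as you describe it. Your inventory of $\mathcal{L}(\varphi)$ asserts that every high-frequency factor is $u$, $\theta$, $\partial_yu$ or $\partial_y\theta$ with at most one $x$-derivative. But $\partial_x[(\theta+\theta^E)\partial_y^2\theta]$ produces $\partial_x\theta\,\partial_y^2\theta$ (the term $C_7$, analogue of $B_7$). Its piece $T_{\partial_x\theta}\partial_y^2\theta$ has high-frequency factor $\partial_y^2\theta$, and neither of your two recipes handles it. Splitting against $\dot\mu$ leaves $\|\partial_y^2\theta_\Phi\|^2_{L^2_{t,\dot\mu}(H^{s-\frac14,0}_\Psi)}$, and Young leaves $\eta\|\partial_y^2\theta_\Phi\|^2_{L^2_t(H^{s,0}_\Psi)}$. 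No norm of $\partial_y^2\theta_\Phi$ appears on the right of \eqref{ineq:phi}, and the tangential norm $\|\partial_y\theta_\Phi\|_{H^{s+1,0}_\Psi}$ cannot replace a normal derivative.

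\textbf{The fix: integrate by parts in $y$, as for $B_{7,1}$.} The boundary term vanishes because $\partial_y\theta|_{y=0}=0$. You obtain $\langle[T_{\partial_x\theta}\partial_y\theta]_\Phi,\partial_y\varphi_\Phi\rangle_{H^{s,0}_\Psi}$, together with $\langle[T_{\partial_x\partial_y\theta}\partial_y\theta]_\Phi,\varphi_\Phi\rangle_{H^{s,0}_\Psi}$ and a $\partial_y\Psi$ term. The last two are handled as in $B_{7,1}$ and contribute $C_\eta\dot\mu\|\varphi_\Phi\|^2_{H^{s,0}_\Psi}+\eta\|\partial_y\theta_\Phi\|^2_{H^{s,0}_\Psi}$.

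For the first term, Lemma \ref{lem:Poincare} and Assumption \ref{assumption} give $\|\partial_x\theta_\Phi\|_{L^\infty_y(H^{\frac12+}_x)}\le C(1+t)^{\frac14}\|\partial_y\theta_\Phi\|_{H^{s,0}_\Psi}\le C\zeta(1+t)^{\frac14}$. Hence that term is bounded by $C\zeta(1+t)^{\frac14}(\|\partial_y\theta_\Phi\|^2_{H^{s,0}_\Psi}+\|\partial_y\varphi_\Phi\|^2_{H^{s,0}_\Psi})$. This is where the $C\zeta(1+t)^{\frac14}\|\partial_y\theta_\Phi\|^2$ term of \eqref{ineq:phi} actually comes from.

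\textbf{Where the smallness $\zeta$ does and does not enter.} Your attribution of that term is off. Assumption \ref{assumption} makes $\partial_y\theta$ small, not $\partial_y^2\theta$. So the $T_{\partial_y^2\theta}\mathcal{U}$-type terms carry no factor $\zeta$; they belong with $C\dot\mu(\|\mathcal{U}_\Phi\|^2_{H^{s,0}_\Psi}+\|\varphi_\Phi\|^2_{H^{s,0}_\Psi})$, as in $B_{19}$. The $T_{\partial_x\partial_y\theta}\partial_y\theta$ piece of $T_{\partial_xv}\partial_y\theta$ needs no smallness at all; it goes into $\dot\mu$ plus Young, as in $B_5$.

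With this correction your argument coincides with the paper's. The paper's separately treated terms $C_8$, $C_9$, $C_{16}$ are handled through \eqref{ineq:nonlinear}, just as your generic recipe would do.
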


\noindent The proof of this proposition shall be along a way similar to that of Proposition \ref{prop:lambda}. First, as from 
\eqref{3.36} to \eqref{eq:lambda}, by acting the operators $T_{\partial_y\theta}$ and $\partial_x$ on \eqref{eq:intmathcalU} and the second equation of \eqref{eq:main} respectively, and using the equations of \eqref{eq:main}, one gets
\begin{equation}
    \begin{aligned}
        \mathcal{L}(T_{\partial_y\theta}\int_0^y\mathcal{U}d\tilde{y})
        =&-T_{\partial_y\theta}\partial_xv+T_{(-\partial_yu\partial_x\theta+\partial_xu\partial_y\theta+2(\theta+\theta^E)\partial_yu\partial_y^2u)}\int_0^y\mathcal{U}d\tilde{y}\\
        &+[(T_uT_{\partial_x\partial_y\theta}-T_{u\partial_x\partial_y\theta})+(T_vT_{\partial_y^2\theta}-T_{v\partial_y^2\theta})-(T_\theta T_{\partial^3_y\theta}-T_{\theta\partial^3_y\theta})]\int_0^y\mathcal{U}d\tilde{y}\\
        &+[T_u;T_{\partial_y\theta}]\partial_x\int_0^y\mathcal{U}d\tilde{y}+[T_v;T_{\partial_y\theta}]\mathcal{U}-[T_\theta;T_{\partial_y\theta}]\partial_y\mathcal{U}-2T_{(\theta+\theta^E)}T_{\partial_y^2\theta}\mathcal{U},
    \end{aligned}
\end{equation}
and
\begin{equation}
    \begin{aligned}
        \mathcal{L}(\partial_x\theta)=&-\partial_xu\partial_x\theta-\partial_xv\partial_y\theta+\partial_x\theta\partial_y^2\theta+\partial_x\theta(\partial_yu)^2+2(\theta+\theta^E)\partial_yu\partial_x\partial_yu\\
        &-T_{\partial_x^2\theta}u-R(\partial_x^2\theta,u)-T_{\partial_x\partial_y\theta}v-R(\partial_x\partial_y\theta,v)-T_{\partial_x\partial_y^2\theta}\theta-R(\partial_x\partial_y^2\theta,\theta).
    \end{aligned}
\end{equation}
Therefore,  $\varphi=\partial_x\theta-T_{\partial_y\theta}\int_0^y\mathcal{U}d\tilde{y}$ satisfies the following equation, 
\begin{equation}\label{eq:phi}
    \begin{aligned}
        \mathcal{L}(\varphi)=&-\partial_xu\partial_x\theta-T_{\partial_xv}\partial_y\theta-R(\partial_xv,\partial_y\theta)+\partial_x\theta\partial_y^2\theta+\partial_x\theta(\partial_yu)^2+2(\theta+\theta^E)\partial_yu\partial_x\partial_yu\\
        &-T_{\partial_x^2\theta}u-R(\partial_x^2\theta,u)-T_{\partial_x\partial_y\theta}v-R(\partial_x\partial_y\theta,v)-T_{\partial_x\partial_y^2\theta}\theta-R(\partial_x\partial_y^2\theta,\theta)\\
        &-T_{(-\partial_yu\partial_x\theta+\partial_xu\partial_y\theta+2(\theta+\theta^E)\partial_yu\partial_y^2u)}\int_0^y\mathcal{U}d\tilde{y}\\
        &-[(T_uT_{\partial_x\partial_y\theta}-T_{u\partial_x\partial_y\theta})+(T_vT_{\partial_y^2\theta}-T_{v\partial_y^2\theta})-(T_\theta T_{\partial^3_y\theta}-T_{\theta\partial^3_y\theta})]\int_0^y\mathcal{U}d\tilde{y}\\
        &-[T_u;T_{\partial_y \theta}]\partial_x \int_0^y \mathcal{U}d \tilde{y}-[T_v;T_{\partial_y \theta}]\mathcal{U}+[T_\theta;T_{\partial_y \theta}]\partial_y \mathcal{U}+2 T_{(\theta+\theta^E)} T_{\partial_y^2 \theta}\mathcal{U}.
    \end{aligned}
\end{equation}
By applying the operator $e^{\Phi(t,D_x)}$ to \eqref{eq:phi} and taking $H^{s,0}_\Psi$ inner product with $\varphi_\Phi$, we find
\begin{equation}\label{eq:innerofphi}
    \begin{aligned}
        &\langle\partial_t\varphi_\Phi,\varphi_\Phi\rangle_{H^{s,0}_\Psi}+\gamma\langle\dot{\mu}(t)\langle D_x\rangle^\frac{1}{2}\varphi_\Phi,\varphi_\Phi\rangle_{H^{s,0}_\Psi}-\theta^E\langle\partial_y^2\varphi_\Phi,\varphi_\Phi\rangle_{H^{s,0}_\Psi}\\
        =&-\langle[T_u\partial_x\varphi]_\Phi,\varphi_\Phi\rangle_{H^{s,0}_\Psi}-\langle[T_v\partial_y\varphi]_\Phi,\varphi_\Phi\rangle_{H^{s,0}_\Psi}+\langle[T_\theta\partial_y^2\varphi]_\Phi,\varphi_\Phi\rangle_{H^{s,0}_\Psi}-\langle[\partial_xu\partial_x\theta]_\Phi,\varphi_\Phi\rangle_{H^{s,0}_\Psi}\\
        &-\langle[T_{\partial_xv}\partial_y\theta]_\Phi,\varphi_\Phi\rangle_{H^{s,0}_\Psi}-\langle[R(\partial_xv,\partial_y\theta)]_\Phi,\varphi_\Phi\rangle_{H^{s,0}_\Psi}\\
        &+\langle[\partial_x\theta\partial_y^2\theta]_\Phi,\varphi_\Phi\rangle_{H^{s,0}_\Psi}+\langle[\partial_x\theta(\partial_yu)^2]_\Phi,\varphi_\Phi\rangle_{H^{s,0}_\Psi}+2\langle[(\theta+\theta^E)\partial_yu\partial_x\partial_yu]_\Phi,\varphi_\Phi\rangle_{H^{s,0}_\Psi}\\
        &-\langle[T_{\partial_x^2\theta}u]_\Phi,\varphi_\Phi\rangle_{H^{s,0}_\Psi}-\langle[R(\partial_x^2\theta,u)]_\Phi,\varphi_\Phi\rangle_{H^{s,0}_\Psi}-\langle[T_{\partial_x\partial_y\theta}v]_\Phi,\varphi_\Phi\rangle_{H^{s,0}_\Psi}-\langle[R(\partial_x\partial_y\theta,v)]_\Phi,\varphi_\Phi\rangle_{H^{s,0}_\Psi}\\
        &-\langle[T_{\partial_x\partial_y^2\theta}\theta]_\Phi,\varphi_\Phi\rangle_{H^{s,0}_\Psi}-\langle[R(\partial_x\partial_y^2\theta,\theta)]_\Phi,\varphi_\Phi\rangle_{H^{s,0}_\Psi}\\
        &-\langle\left[T_{(-\partial_yu\partial_x\theta+\partial_xu\partial_y\theta+2(\theta+\theta^E)\partial_yu\partial_y^2u)}\int_0^y\mathcal{U}d\tilde{y}\right]_\Phi,\varphi_\Phi\rangle_{H^{s,0}_\Psi}\\
        &-\langle\left[[(T_uT_{\partial_x\partial_y\theta}-T_{u\partial_x\partial_y\theta})+(T_vT_{\partial_y^2\theta}-T_{v\partial_y^2\theta})-(T_\theta T_{\partial^3_y\theta}-T_{\theta\partial^3_y\theta})]\int_0^y\mathcal{U}d\tilde{y}\right]_\Phi,\varphi_\Phi\rangle_{H^{s,0}_\Psi}\\
        &-\langle\left[[T_u;T_{\partial_y\theta}]\partial_x\int_0^y\mathcal{U}d\tilde{y}\right]_\Phi,\varphi_\Phi\rangle_{H^{s,0}_\Psi}-\langle[[T_v;T_{\partial_y\theta}]\mathcal{U}]_\Phi,\varphi_\Phi\rangle_{H^{s,0}_\Psi}+\langle[[T_\theta;T_{\partial_y\theta}]\partial_y\mathcal{U}]_\Phi,\varphi_\Phi\rangle_{H^{s,0}_\Psi}\\
        &+2\langle [T_{(\theta+\theta^E)}T_{\partial_y^2\theta}\mathcal{U}]_\Phi,\varphi_\Phi\rangle_{H^{s,0}_\Psi}\\
        \triangleq&C_1+\cdots+C_{21}.
    \end{aligned}
\end{equation}

For the left hand side of \eqref{eq:innerofphi}, similar  to Lemma \ref{lem:leftofmathcalU}, one has

\begin{lemma}\label{lem:leftofphi}
    For any $0<t<T^*$, the terms on the left-hand side of \eqref{eq:innerofphi} satisfy the following estimate:
    \begin{equation}
        \begin{aligned}
            &\langle\partial_t\varphi_\Phi,\varphi_\Phi\rangle_{H^{s,0}_\Psi}+\gamma\langle\dot{\mu}(t)\langle D_x\rangle^\frac{1}{2}\varphi_\Phi,\varphi_\Phi\rangle_{H^{s,0}_\Psi}-\theta^E\langle\partial_y^2\varphi_\Phi,\varphi_\Phi\rangle_{H^{s,0}_\Psi}\\
            &\geq \frac{1}{2}\frac{d}{dt}\|\varphi_\Phi\|_{H^{s,0}_\Psi}^2+\gamma\dot{\mu}(t)\|\varphi_\Phi\|_{H^{s+\frac{1}{4},0}_\Psi}^2+\frac{1}{2}\theta^E\|\partial_y\varphi_\Phi\|_{H^{s,0}_\Psi}^2.
        \end{aligned}
    \end{equation}
\end{lemma}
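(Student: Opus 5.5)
The plan is to repeat the proof of Lemma \ref{lem:leftofmathcalU} with $\mathcal{U}_\Phi$ replaced by $\varphi_\Phi$; the only structural change is the boundary term at $y=0$. The $\dot\mu$ term needs no work: since $\langle D_x\rangle^{\frac12}$ is a Fourier multiplier commuting with $\langle D_x\rangle^s$ and with multiplication by $e^{2\Psi}$, Plancherel gives
$$\gamma\langle\dot{\mu}\langle D_x\rangle^{\frac12}\varphi_\Phi,\varphi_\Phi\rangle_{H^{s,0}_\Psi}=\gamma\dot\mu(t)\|\varphi_\Phi\|^2_{H^{s+\frac14,0}_\Psi}.$$
For the time derivative I will use $\frac{d}{dt}\|\varphi_\Phi\|^2_{H^{s,0}_\Psi}=2\langle\partial_t\varphi_\Phi,\varphi_\Phi\rangle_{H^{s,0}_\Psi}+2\langle\partial_t\Psi\,\varphi_\Phi,\varphi_\Phi\rangle_{H^{s,0}_\Psi}$, which produces the term $-\langle\partial_t\Psi\varphi_\Phi,\varphi_\Phi\rangle$.

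For the diffusion term I integrate by parts in $y$:
$$-\theta^E\langle\partial_y^2\varphi_\Phi,\varphi_\Phi\rangle_{H^{s,0}_\Psi}=\theta^E\|\partial_y\varphi_\Phi\|^2_{H^{s,0}_\Psi}+2\theta^E\langle\partial_y\Psi\partial_y\varphi_\Phi,\varphi_\Phi\rangle_{H^{s,0}_\Psi}+\text{(boundary term at }y=0).$$
The main point is to check that this boundary term, $\theta^E\,\mathrm{Re}\int_{\R}\langle D_x\rangle^s\partial_y\varphi_\Phi\overline{\langle D_x\rangle^s\varphi_\Phi}\,dx\big|_{y=0}$, vanishes. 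I will show that $\partial_y\varphi|_{y=0}=0$. We have $\partial_y\varphi=\partial_x\partial_y\theta-T_{\partial_y^2\theta}\int_0^y\mathcal{U}\,d\tilde y-T_{\partial_y\theta}\,\mathcal{U}$. At $y=0$ the first term is zero because $\partial_y\theta|_{y=0}=0$, the second because the integral over $[0,0]$ vanishes, and the third because $\partial_y\theta|_{y=0}=0$ once more. So the boundary term drops out. This is the step I regard as the main obstacle: unlike Lemma \ref{lem:leftofmathcalU}, where $\partial_y\mathcal{U}|_{y=0}=0$ is imposed directly, here it must be derived from the Neumann condition on $\theta$ and the definition \eqref{def:varphi}. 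The decay of $\varphi$ as $y\to\infty$ comes from the finiteness of the weighted norms.

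It remains to absorb the cross term. By Young's inequality,
$$|2\theta^E\langle\partial_y\Psi\partial_y\varphi_\Phi,\varphi_\Phi\rangle|\le\tfrac{1}{\kappa}\theta^E\|\partial_y\varphi_\Phi\|^2+\kappa\theta^E\|\partial_y\Psi\varphi_\Phi\|^2 .$$
Combining this with $-\partial_t\Psi=4\theta^E(\partial_y\Psi)^2$ from \eqref{eq:weightpro} gives the lower bound
$$(1-\tfrac1\kappa)\theta^E\|\partial_y\varphi_\Phi\|^2+(4-\kappa)\theta^E\|\partial_y\Psi\varphi_\Phi\|^2 .$$
Taking $\kappa=2$, or any $\kappa\in[2,4]$, yields at least $\frac12\theta^E\|\partial_y\varphi_\Phi\|^2_{H^{s,0}_\Psi}$ plus a nonnegative weight term that can be discarded. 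Summing the three pieces gives the stated inequality.
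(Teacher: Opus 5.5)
Your proof is correct and follows the paper's route, which simply repeats the argument of Lemma \ref{lem:leftofmathcalU}: integrate by parts in $y$, apply Young's inequality with a parameter $\kappa$, and use $\partial_t\Psi+4\theta^E(\partial_y\Psi)^2=0$. Your choice $\kappa=2$ (the paper uses $\kappa=3$ in Lemma \ref{lem:leftofmathcalU}) gives the constant $\frac12$, and your check that $\partial_y\varphi|_{y=0}=0$ follows from $\partial_y\theta|_{y=0}=0$ and \eqref{def:varphi} correctly justifies dropping the boundary term, a point the paper leaves implicit.
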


For the terms given on the right hand side of \eqref{eq:innerofphi}, we have the following result.

\begin{lemma}\label{lem:rightofphi}
    For any $0<t<T^*$ and $\eta>0$, the following estimate holds for the terms $C_1,\cdots,C_{21}$ given on the right-hand side of \eqref{eq:innerofphi}:
    \begin{equation}\label{ineq:rightofphi}
        \begin{aligned}
            \sum\limits_{i=1}^{21}|C_i|\leq& C_\eta\dot{\mu}(t)\left(\|\varphi_\Phi\|_{H^{s+\frac{1}{4},0}_\Psi}^2+\|u_\Phi\|_{H^{s+1,0}_\Psi}^2+\|\theta_\Phi\|_{H^{s+1,0}_\Psi}^2+\|\mathcal{U}_\Phi\|_{H^{s,0}_\Psi}^2\right)
            \\
            &
            +(\eta+C\zeta(1+t)^\frac{1}{4})\left(\|\partial_y\theta_\Phi\|_{H^{s+1,0}_\Psi}^2+\|\partial_y\varphi_\Phi\|_{H^{s,0}_\Psi}^2\right)
            \\
            &+
            \eta\left(\|\partial_yu_\Phi\|_{H^{s+1,0}_\Psi}^2
+\|\partial_y\mathcal{U}_\Phi\|_{H^{s-1,0}_\Psi}^2\right).
        \end{aligned}
    \end{equation}
\end{lemma}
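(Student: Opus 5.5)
We estimate the terms $C_1,\dots,C_{21}$ one at a time, along the lines of Lemma \ref{lem:rightofmathcalU} and Lemma \ref{lem:rightoflambda}. The tools are the para-product laws (Lemma \ref{lem:paraestimate}), the commutator estimates (Lemmas \ref{lem:commutator} and \ref{lem:commutator2}), the Poincaré-type bound (Lemma \ref{lem:Poincare}), the weight bound (Lemma \ref{lem:Psi}) and Assumption \ref{assumption}. Every low-regularity coefficient norm produced along the way is absorbed into $\dot\mu(t)$ through \eqref{eq:mu}.

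\textbf{Transport and diffusion terms $C_1,C_2,C_3$.} We treat them exactly as $A_1,A_2,A_3$, with $\mathcal{U}$ replaced by $\varphi$.
\begin{itemize}
\item For $C_1$ we use the commutator of Lemma \ref{lem:commutator2} together with the symmetrization with $(T_u)^*$. This gives $C\dot\mu\|\varphi_\Phi\|_{H^{s+\frac14,0}_\Psi}^2$.
\item For $C_2$ we replace $v$ by $-\int_0^y\partial_xu\,d\tilde y+\partial_y\theta-\int_0^y(\partial_yu)^2d\tilde y$, as in \eqref{3.15}. This gives $C_\eta\dot\mu\|\varphi_\Phi\|^2_{H^{s,0}_\Psi}+\eta\|\partial_y\varphi_\Phi\|^2_{H^{s,0}_\Psi}$.
\item For $C_3$ we integrate by parts. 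The boundary term vanishes because $\partial_y\varphi|_{y=0}=0$, which follows from $\partial_y\theta|_{y=0}=0$ and $\mathcal{U}$'s boundary condition (to be checked). The main piece is $\|\theta_\Phi\|_{L^\infty_y}\|\partial_y\varphi_\Phi\|^2$, bounded by $C\zeta(1+t)^{1/4}\|\partial_y\varphi_\Phi\|^2_{H^{s,0}_\Psi}$.
\end{itemize}

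\textbf{Quadratic source terms $C_4,C_7,C_8,C_9,C_{10},C_{11}$.} These involve at most $\partial_x$ of $u$ or $\theta$ at top order. We expand each with Bony's decomposition. In every para-product the high-frequency factor is measured in $H^{s+1}_x$ or $H^{s+\frac34}_x$, and the low-frequency factor in $L^\infty_y(H^{\frac32+}_x)$. We pair with $\varphi_\Phi$ in $H^{s+\frac14}$ and use Young's inequality. Each term is then bounded by $C\dot\mu(\|u_\Phi\|^2_{H^{s+1,0}_\Psi}+\|\theta_\Phi\|^2_{H^{s+1,0}_\Psi}+\|\varphi_\Phi\|^2_{H^{s+\frac14,0}_\Psi})$, plus $\eta$ times the $y$-derivative norms wherever $\partial_yu$ appears at top order. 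For example, in $C_9$ the top-order factor $\partial_x\partial_yu$ is paired with $\varphi$, giving $\eta\|\partial_yu_\Phi\|^2_{H^{s+1,0}_\Psi}+C_\eta\dot\mu\|\varphi_\Phi\|^2$. The $\theta^E$ part of $C_9$ is handled the same way, using only the order-one constant. In $C_7$ the top-order factor is $\partial_y^2\theta$ or $\partial_x\theta$. We split $\partial_y^2\theta$ via integration by parts in $y$ when its regularity is needed, and use Lemma \ref{lem:Psi} for the weight derivative.

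\textbf{Terms $C_5,C_6,C_{12},C_{13},C_{14},C_{15}$ (the main obstacle).} These contain $\partial_xv$, $\partial_x\partial_y\theta$ and $\partial_x\partial_y^2\theta$, where there is an apparent loss of derivatives.
\begin{itemize}
\item In $C_5=-\langle[T_{\partial_xv}\partial_y\theta]_\Phi,\varphi_\Phi\rangle$, $\partial_xv$ sits at low frequency. We bound it in $L^\infty_y(H^{\frac12+}_x)$ by the terms of $\dot\mu$ (via the divergence relation $\partial_xv=-\int_0^y\partial_x^2u+\partial_x\partial_y\theta-\dots$). The high-frequency factor $\partial_y\theta$ is kept in $H^{s+1}$. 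The cancellation built into $\varphi$ has already removed the dangerous $T_{\partial_y\theta}\partial_xv$.
\item For $C_6$ we use $s_1+s_2>s+\frac12$ in the remainder estimate, putting $\partial_y\theta$ in a low norm.
\item For $C_{14},C_{15}$ ($\partial_x\partial_y^2\theta$ in the low slot, $\theta$ in the high slot) the difficulty is that $\partial_y^2\theta$ appears. We integrate by parts in $y$ to move one $\partial_y$ onto $\varphi$ or onto $\theta$. This produces $\eta\|\partial_y\varphi_\Phi\|^2$ and $C\zeta(1+t)^{1/4}\|\partial_y\theta_\Phi\|^2_{H^{s+1,0}_\Psi}$, the latter from the smallness in Assumption \ref{assumption}.
\end{itemize}
This is exactly why the factor $(\eta+C\zeta(1+t)^{1/4})$ multiplies $\|\partial_y\theta_\Phi\|^2_{H^{s+1,0}_\Psi}$ in \eqref{ineq:rightofphi}. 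Getting these splittings right is the delicate step.

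\textbf{Terms carrying $\mathcal{U}$, $C_{16}$–$C_{21}$.} All of these carry $\int_0^y\mathcal{U}$ or $\mathcal{U}$.
\begin{itemize}
\item For $\int_0^y\mathcal{U}$ we use $\|\int_0^y\mathcal{U}_\Phi\|_{L^\infty_y(H^s_x)}\lesssim(1+t)^{1/4}\|\mathcal{U}_\Phi\|_{H^{s,0}_\Psi}$, as in $A_5$.
\item For $C_{17}$–$C_{20}$ the commutators $T_aT_b-T_{ab}$ and $[T_a;T_b]$ gain one derivative by Lemma \ref{lem:commutator}. This compensates the $\partial_x$ in $C_{18}$ and the $\partial_y$ in $C_{20}$, the latter giving $\eta\|\partial_y\mathcal{U}_\Phi\|^2_{H^{s-1,0}_\Psi}$.
\item $C_{16}$ and $C_{21}$ are direct para-product bounds, giving $C\dot\mu\|\mathcal{U}_\Phi\|^2_{H^{s,0}_\Psi}$.
\end{itemize}
Summing all the bounds yields \eqref{ineq:rightofphi}.
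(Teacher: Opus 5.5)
Your overall route is the paper's: term-by-term Bony/para-product bounds with Lemma \ref{lem:Poincare}, the divergence relation for $v$, integration by parts in $y$, and the commutator gain of Lemma \ref{lem:commutator} for the $\mathcal{U}$-terms. (The paper writes out only $C_8$, $C_9$, $C_{16}$ and refers the rest to the $B_j$ of Appendix \ref{app:A}.) However, your account of $C_7$ does not close as stated. You place $C_7$ among the terms bounded by $C\dot{\mu}(\cdots)$ plus $\eta$ times dissipation. After Bony's decomposition, integrate $T_{\partial_x\theta}\partial_y^2\theta$ by parts (the boundary term vanishes since $\partial_y\theta|_{y=0}=0$). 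This produces $\langle[T_{\partial_x\theta}\partial_y\theta]_\Phi,\partial_y\varphi_\Phi\rangle_{H^{s,0}_\Psi}$, which is bounded by $\|\partial_x\theta_\Phi\|_{L^\infty_y(H^{\frac{1}{2}+}_x)}\|\partial_y\theta_\Phi\|_{H^{s,0}_\Psi}\|\partial_y\varphi_\Phi\|_{H^{s,0}_\Psi}$. This is a product of two dissipative norms, and neither appears with a $\dot{\mu}$ factor on the right of \eqref{ineq:rightofphi}. Young's inequality with $\eta$ therefore cannot bring it into the stated form. It closes only because $\|\partial_x\theta_\Phi\|_{L^\infty_y(H^{\frac{1}{2}+}_x)}\lesssim(1+t)^{\frac{1}{4}}\|\partial_y\theta_\Phi\|_{H^{s,0}_\Psi}\le C(1+t)^{\frac{1}{4}}\zeta$ by Assumption \ref{assumption}, exactly as in $B_{7,1}$. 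This term, together with $C_3$, is the actual source of the factor $C\zeta(1+t)^{\frac{1}{4}}$ in \eqref{ineq:rightofphi}.

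Conversely, you attribute that factor to $C_{14}$ and $C_{15}$, which is mistaken, and the integration by parts you propose there is unnecessary. In those terms $\partial_x\partial_y^2\theta$ sits in the low-frequency slot. The bound $\|\partial_x\partial_y^2\theta_\Phi\|_{L^\infty_y(H^{\frac{1}{2}+}_x)}\lesssim(1+t)^{\frac{1}{4}}\|\partial_y^3\theta_\Phi\|_{H^{\frac{3}{2}+,0}_\Psi}$ is already part of $\dot{\mu}$, through the $H^{\frac{3}{2}+,1}_\Psi$ norm of $\partial_y^2\theta_\Phi$. Since $\theta$ is needed only in $H^s$, both terms are bounded directly by $C\dot{\mu}(\|\theta_\Phi\|^2_{H^{s+1,0}_\Psi}+\|\varphi_\Phi\|^2_{H^{s,0}_\Psi})$, as for $B_{12}$ and $B_{13}$, with no smallness used.

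Finally, $C_{12}$ and $C_{13}$ are listed but never treated. There $v$, which does not decay as $y\to\infty$, sits in the high-frequency slot, so it must be split by the divergence relation as in $B_{10}$. Put $\int_0^y\partial_xu_\Phi\,d\tilde{y}$ in $L^\infty_y(H^s_x)$, bounded by $(1+t)^{\frac{1}{4}}\|u_\Phi\|_{H^{s+1,0}_\Psi}$, and put the $\partial_y\theta$ piece in $H^{s,0}_\Psi$ with its coefficient $\partial_x\partial_y\theta$ in $L^\infty_y$.
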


\begin{proof}
(1) All terms given on the right hand side of \eqref{eq:innerofphi}, except that the terms $C_8$, $C_9$ and $C_{16}$, are of similar form of $\{B_j\}_{j=1}^{19}$ given in \eqref{A-1}, whose estimates shall be given in Appendix A, so as stated in Lemma \ref{lem:rightoflambda}, one has the estimate 
   \begin{equation}\label{3.49}
        \begin{aligned}
            \sum_{\substack{j=1\\ j\neq 8,9,16}}^{21}|C_j|\leq &  C_\eta\dot{\mu}(t)\left(\|\varphi_\Phi\|_{H^{s+\frac{1}{4},0}_\Psi}^2+\|u_\Phi\|_{H^{s+1,0}_\Psi}^2
+\|\theta_\Phi\|_{H^{s+1,0}_\Psi}^2+\|\mathcal{U}_\Phi\|_{H^{s,0}_\Psi}^2\right)\\
&+            (\eta+C\zeta(1+t)^\frac{1}{4})\left(\|\partial_y\varphi_\Phi\|_{H^{s,0}_\Psi}^2+\|\partial_y\theta_\Phi\|_{H^{s,0}_\Psi}^2\right)+\eta\left(\|\partial_yu_\Phi\|_{H^{s,0}_\Psi}^2+\|\partial_y\mathcal{U}_\Phi\|_{H^{s-1,0}_\Psi}^2\right)       
\end{aligned}
    \end{equation}
    for any fixed $\eta>0$. It remains to study the terms $C_8$, $C_9$ and $C_{16}$ respectively.

 (2) \underline{Estimate of $C_8$}. 
    Obviously, one has
    \begin{equation}
        \begin{aligned}
        C_8=&\langle [T_{\partial_x\theta}(\partial_yu)^2]_\Phi,\varphi_\Phi\rangle_{H^{s,0}_\Psi}+\langle [T_{(\partial_yu)^2}\partial_x\theta]_\Phi,\varphi_\Phi\rangle_{H^{s,0}_\Psi}+\langle [R(\partial_x\theta,(\partial_yu)^2)]_\Phi,\varphi_\Phi\rangle_{H^{s,0}_\Psi}\\
        \triangleq&C_{8,1}+C_{8,2}+C_{8,3}.
        \end{aligned}
    \end{equation}
    By applying Lemmas \ref{lem:paraestimate} and \ref{lem:Poincare}, we have
    \begin{equation}\label{3.51}
        \begin{aligned}
            |C_{8,1}|\leq& 2|\langle [T_{\partial_x\theta}T_{\partial_yu}\partial_yu]_\Phi,\varphi_\Phi\rangle_{H^{s,0}_\Psi}|+|\langle [T_{\partial_x\theta}R(\partial_yu,\partial_yu)]_\Phi,\varphi_\Phi\rangle_{H^{s,0}_\Psi}|\\
            \leq& C(1+t)^\frac{1}{2}\|\partial_y\theta_\Phi\|_{H^{\frac{3}{2}+,0}_\Psi}\|\partial_y^2u_\Phi\|_{H^{\frac{1}{2}+,0}_\Psi}\|\partial_yu_\Phi\|_{H^{s,0}_\Psi}\|\varphi_\Phi\|_{H^{s,0}_\Psi}\\
            \leq& C_\eta(1+t)\|\partial_y\theta_\Phi\|_{H^{\frac{3}{2}+,0}_\Psi}^2\|\partial_y^2u_\Phi\|_{H^{\frac{1}{2}+,0}_\Psi}^2\|\varphi_\Phi\|_{H^{s,0}_\Psi}^2+\eta\|\partial_yu_\Phi\|_{H^{s,0}_\Psi}^2\\
            \leq& C_\eta\dot{\mu}(t)\|\varphi_\Phi\|_{H^{s,0}_\Psi}^2+\eta\|\partial_yu_\Phi\|_{H^{s,0}_\Psi}^2.
        \end{aligned}
    \end{equation}
On the other hand, to estimate $C_{8,2}$ and $C_{8,3}$, first we have the following inequalities for a quadratic term,
    \begin{equation}\label{ineq:nonlinear}
        \begin{aligned}
            \|[f\cdot g]_\Phi\|_{H^{\sigma,0}_\Psi}\leq&C\|f_\Phi\|_{H^{\sigma,0}_\Psi}\|g_\Phi\|_{L_y^\infty(H_x^{\sigma})}\\
            \leq&C(1+t)^\frac{1}{4}\|f_\Phi\|_{H^{\sigma,0}_\Psi}\|\partial_y g_\Phi\|_{H^{\sigma,0}_\Psi},\quad \forall \sigma>\frac{1}{2},
        \end{aligned}
    \end{equation}
    by applying Bony's decomposition, and using Lemmas \ref{lem:paraestimate} and \ref{lem:Poincare}. Thus, by using Lemmas \ref{lem:paraestimate}, \ref{lem:Poincare} and the inequality \eqref{ineq:nonlinear}, we deduce that
    \begin{equation}\label{3.53}
        \begin{aligned}
            |C_{8,2}|+|C_{8,3}|\leq& C(1+t)^\frac{1}{4}\|[(\partial_yu)^2]_\Phi\|_{H^{\frac{1}{2}+,0}_\Psi}\|\partial_y\theta_\Phi\|_{H^{s+1,0}_\Psi}\|\varphi_\Phi\|_{H^{s,0}_\Psi}\\
            \leq& C(1+t)^\frac{1}{2}\|\partial_yu_\Phi\|_{H^{\frac{1}{2}+,0}_\Psi}\|\partial_y^2u_\Phi\|_{H^{\frac{1}{2}+,0}_\Psi}\|\partial_y\theta_\Phi\|_{H^{s+1,0}_\Psi}\|\varphi_\Phi\|_{H^{s,0}_\Psi}\\
            \leq& C_\eta\dot{\mu}(t)\|\varphi_\Phi\|_{H^{s,0}_\Psi}^2+\eta\|\partial_y\theta_\Phi\|_{H^{s+1,0}_\Psi}^2.
        \end{aligned}
    \end{equation}
    Therefore, by summing the  estimates \eqref{3.51} and \eqref{3.53} we conclude
    \begin{equation}\label{3.54}
        |C_8|\leq C_\eta\dot{\mu}(t)\|\varphi_\Phi\|_{H^{s,0}_\Psi}^2+\eta(\|\partial_yu_\Phi\|_{H^{s,0}_\Psi}^2+\|\partial_y\theta_\Phi\|_{H^{s+1,0}_\Psi}^2).
    \end{equation}

    (3) \underline{Estimate of $C_9$}.
    Obviously, one has
    \begin{equation}
        \begin{aligned}
            C_9=&2\langle [T_{\partial_x\partial_yu}((\theta+\theta^E)\partial_yu)]_\Phi,\varphi_\Phi\rangle_{H^{s,0}_\Psi}+2\langle [T_{(\theta+\theta^E)\partial_yu}\partial_x\partial_yu]_\Phi,\varphi_\Phi\rangle_{H^{s,0}_\Psi}\\
            &+2\langle [R((\theta+\theta^E)\partial_yu,\partial_x\partial_yu)]_\Phi,\varphi_\Phi\rangle_{H^{s,0}_\Psi}\\
            \triangleq&C_{9,1}+C_{9,2}+C_{9,3}.
        \end{aligned}
    \end{equation}
Similar to \eqref{3.51}, we have
    \begin{equation}
        \begin{aligned}
        |C_{9,1}|\leq& 2\theta^E|\langle [T_{\partial_x\partial_yu}\partial_yu]_\Phi,\varphi_\Phi\rangle_{H^{s,0}_\Psi}|+2\langle [T_{\partial_x\partial_yu}T_{\theta}\partial_yu]_\Phi,\varphi_\Phi\rangle_{H^{s,0}_\Psi}\\
        &+2\langle [T_{\partial_x\partial_yu}T_{\partial_yu}\theta]_\Phi,\varphi_\Phi\rangle_{H^{s,0}_\Psi}+2\langle [T_{\partial_x\partial_yu}R(\partial_yu,\theta)]_\Phi,\varphi_\Phi\rangle_{H^{s,0}_\Psi}\\
        \leq& C(1+t)^\frac{1}{4}\|\partial_y^2u_\Phi\|_{H^{\frac{3}{2}+,0}_\Psi}\|\partial_yu_\Phi\|_{H^{s,0}_\Psi}\|\varphi_\Phi\|_{H^{s,0}_\Psi}\\
        &+C(1+t)^\frac{1}{2}\|\partial_y^2u_\Phi\|_{H^{\frac{3}{2}+,0}_\Psi}\|\partial_y\theta_\Phi\|_{H^{\frac{1}{2}+,0}_\Psi}\|\partial_yu_\Phi\|_{H^{s,0}_\Psi}\|\varphi_\Phi\|_{H^{s,0}_\Psi}\\
        &+C(1+t)^\frac{1}{2}\|\partial_y^2u_\Phi\|_{H^{\frac{3}{2}+,0}_\Psi}\|\partial_yu_\Phi\|_{H^{\frac{1}{2}+,0}_\Psi}\|\partial_y\theta_\Phi\|_{H^{s,0}_\Psi}\|\varphi_\Phi\|_{H^{s,0}_\Psi}\\
        \leq& C_\eta(1+t)\left(\|\partial_y^2u_\Phi\|_{H^{\frac{3}{2}+,0}_\Psi}^2\|\partial_y\theta_\Phi\|_{H^{\frac{1}{2}+,0}_\Psi}^2+\|\partial_y^2u_\Phi\|_{H^{\frac{3}{2}+,0}_\Psi}^2\|\partial_yu_\Phi\|_{H^{\frac{1}{2}+,0}_\Psi}^2\right)\|\varphi_\Phi\|_{H^{s,0}_\Psi}^2\\
        &+C_\eta(1+t)^\frac{1}{2}\|\partial_y^2u_\Phi\|_{H^{\frac{3}{2}+,0}_\Psi}^2\|\varphi_\Phi\|_{H^{s,0}_\Psi}^2+\eta(\|\partial_yu_\Phi\|_{H^{s,0}_\Psi}^2+\|\partial_y\theta_\Phi\|_{H^{s,0}_\Psi}^2)\\
        \leq& C_\eta\dot{\mu}(t)\|\varphi_\Phi\|_{H^{s,0}_\Psi}^2+\eta(\|\partial_yu_\Phi\|_{H^{s,0}_\Psi}^2+\|\partial_y\theta_\Phi\|_{H^{s,0}_\Psi}^2).
        \end{aligned}
    \end{equation}
    By using Lemmas \ref{lem:paraestimate} and \ref{lem:Poincare}, one gets
    \begin{equation}
        \begin{aligned}
            &|C_{9,2}|+|C_{9,3}|\\
            \leq&C\left((1+t)^\frac{1}{4}\|\partial_y^2u_\Phi\|_{H^{\frac{1}{2}+,0}_\Psi}+\|[\theta\partial_yu]_\Phi\|_{L_y^\infty(H_x^{\frac{1}{2}+})}\right)\|\partial_yu_\Phi\|_{H^{s+1,0}_\Psi}\|\varphi_\Phi\|_{H^{s,0}_\Psi}\\
            \leq&C\left((1+t)^\frac{1}{4}\|\partial_y^2u_\Phi\|_{H^{\frac{1}{2}+,0}_\Psi}+\|\theta_\Phi\|_{L_y^\infty(H_x^{\frac{1}{2}+})}\|\partial_yu_\Phi\|_{L_y^\infty(H_x^{\frac{1}{2}+})}\right)\|\partial_yu_\Phi\|_{H^{s+1,0}_\Psi}\|\varphi_\Phi\|_{H^{s,0}_\Psi}\\
            \leq& C\left((1+t)^\frac{1}{4}\|\partial_y^2u_\Phi\|_{H^{\frac{1}{2}+,0}_\Psi}+(1+t)^\frac{1}{2}\|\partial_y\theta_\Phi\|_{H^{\frac{1}{2}+,0}_\Psi}\|\partial_y^2u_\Phi\|_{H^{\frac{1}{2}+,0}_\Psi}\right)\|\partial_yu_\Phi\|_{H^{s+1,0}_\Psi}\|\varphi_\Phi\|_{H^{s,0}_\Psi}\\
            \leq& C_\eta\left((1+t)^\frac{1}{2}\|\partial_y^2u_\Phi\|_{H^{\frac{1}{2}+,0}_\Psi}^2+(1+t)\|\partial_y\theta_\Phi\|_{H^{\frac{1}{2}+,0}_\Psi}^2\|\partial_y^2u_\Phi\|_{H^{\frac{1}{2}+,0}_\Psi}^2\right)\|\varphi_\Phi\|_{H^{s,0}_\Psi}^2+\eta\|\partial_yu_\Phi\|_{H^{s+1,0}_\Psi}^2\\
            \leq& C_\eta\dot{\mu}(t)\|\varphi_\Phi\|_{H^{s,0}_\Psi}^2+\eta\|\partial_yu_\Phi\|_{H^{s+1,0}_\Psi}^2.
        \end{aligned}
    \end{equation}
    By summing the above estimates, we obtain
    \begin{equation}\label{3.58}
        |C_9|\leq C_\eta\dot{\mu}(t)\|\varphi_\Phi\|_{H^{s,0}_\Psi}^2+\eta(\|\partial_yu_\Phi\|_{H^{s+1,0}_\Psi}^2+\|\partial_y\theta_\Phi\|_{H^{s,0}_\Psi}^2).
    \end{equation}

    (3) \underline{Estimate of $C_{16}$}. Recall that 
    \begin{equation}
        C_{16}=-\langle\left[T_{(-\partial_yu\partial_x\theta+\partial_xu\partial_y\theta+2(\theta+\theta^E)\partial_yu\partial_y^2u)}\int_0^y\mathcal{U}d\tilde{y}\right]_\Phi,\varphi_\Phi\rangle_{H^{s,0}_\Psi}.
    \end{equation}
    From Lemmas \ref{lem:paraestimate}, \ref{lem:Poincare} and the inequality \eqref{ineq:nonlinear}, we obtain that
    \begin{equation}\label{3.60}
        \begin{aligned}
            |C_{16}| \leq\; & C (1+t)^{\frac{1}{4}} \Big(
                \| [\partial_y u\, \partial_x \theta]_\Phi \|_{H^{\frac{1}{2}+,0}_\Psi}
                + \| [\partial_x u\, \partial_y \theta]_\Phi \|_{H^{\frac{1}{2}+,0}_\Psi} \\
            & \qquad\qquad
                + 2 \| [(\theta+\theta^E)\, \partial_y u\, \partial_y^2 u]_\Phi \|_{H^{\frac{1}{2}+,0}_\Psi}
            \Big)
            \|\mathcal{U}_\Phi\|_{H^{s,0}_\Psi} \|\varphi_\Phi\|_{H^{s,0}_\Psi} \\[1ex]
            \leq\; & C \Big(
                (1+t)^{\frac{1}{2}} \|\partial_y u_\Phi\|_{H^{\frac{3}{2}+,0}_\Psi} \|\partial_y \theta_\Phi\|_{H^{\frac{3}{2}+,0}_\Psi}
                + (1+t)^{\frac{1}{2}} \|\partial_y^2 u_\Phi\|_{H^{\frac{1}{2}+,0}_\Psi}^2 \\
            & \qquad\qquad
                + (1+t)^{\frac{3}{4}} \|\partial_y^2 u_\Phi\|_{H^{\frac{1}{2}+,0}_\Psi}^2 \|\partial_y \theta_\Phi\|_{H^{\frac{1}{2}+,0}_\Psi}
            \Big)
            \|\mathcal{U}_\Phi\|_{H^{s,0}_\Psi} \|\varphi_\Phi\|_{H^{s,0}_\Psi} \\[1ex]
            \leq\; & C \dot{\mu}(t) \|\mathcal{U}_\Phi\|_{H^{s,0}_\Psi} \|\varphi_\Phi\|_{H^{s,0}_\Psi} \\[1ex]
            \leq\; & C \dot{\mu}(t) \left( \|\mathcal{U}_\Phi\|_{H^{s,0}_\Psi}^2 + \|\varphi_\Phi\|_{H^{s,0}_\Psi}^2 \right).
        \end{aligned}
    \end{equation}
    
    Finally, by collecting the estimates \eqref{3.54}, \eqref{3.58} and \eqref{3.60} of $C_8, C_9$ and $C_{16}$ respectively, together with \eqref{3.49} of the other terms, we conclude the estimate \eqref{ineq:rightofphi}. This ends the proof of Lemma \ref{lem:rightofphi}.
\end{proof}
\begin{proof}[Proof of Proposition \ref{prop:phi}]
    In a way similar to the proof of Proposition \ref{prop:lambda}, by using $\varphi|_{t=0}=\partial_x\theta|_{t=0}$, and combining the estimates given in Lemmas \ref{lem:leftofphi} and \ref{lem:rightofphi}, from \eqref{eq:innerofphi} we deduce the estimate \eqref{ineq:phi} immediately.
\end{proof}

\subsection{Estimates on \texorpdfstring{$u,\theta$}{u,theta}}
In this subsection, we establish the a priori estimates of $u$ and $\theta$. First, we have
\begin{proposition}\label{prop:u}
    Let $(u,\theta)$ be a solution to \eqref{eq:main}. Then, for any $0<t<T^*$ and $0<\eta<1$, there is a constant $C_\eta$ depending on $\eta$ such that the following estimate holds when the right hand side is finite
    \begin{equation}\label{ineq:u}
        \begin{aligned}
            &\|u_\Phi(t)\|_{H^{s+1,0}_\Psi}^2 + 2(\gamma-C_\eta)\|u_\Phi\|_{L_{t,\dot{\mu}}^2(H^{s+\frac{5}{4},0}_\Psi)}^2 + (\theta^E-C\zeta(1+t)^\frac{1}{4}-2\eta) \|\partial_y u_\Phi\|_{L_t^2(H^{s+1,0}_\Psi)}^2 \\
            \leq& \|u_\Phi(0)\|_{H^{s+\frac{3}{2},0}_{\Psi_0}}^2+C\|u_\Phi\|_{L_{t,\dot{\mu}}^2(H^{s+\frac{5}{4},0}_\Psi)}^2 + (2\eta+C\zeta(1+t)^\frac{1}{4}) \|\partial_y u_\Phi\|_{L_t^2(H^{s+1,0}_\Psi)}^2 \\
            & + C\|\theta_\Phi\|_{L_{t,\dot{\mu}}^2(H^{s+1,0}_\Psi)}^2 + 4\eta\|\partial_y \theta_\Phi\|_{L_t^2(H^{s+1,0}_\Psi)}^2 + C\|\mathcal{U}_\Phi\|_{L_{t,\dot{\mu}}^2(H^{s+\frac{1}{4},0}_\Psi)}^2 + 2\eta\|\partial_y \mathcal{U}_\Phi\|_{L_t^2(H^{s-\frac{1}{4},0}_\Psi)}^2\\
            & + C(1+t)^\frac{1}{2}M^2 \Big(2\eta\|\partial_yu_\Phi\|_{L_t^2(H^{s+1,0}_\Psi)}^2 + C\|\lambda_\Phi\|_{L_{t,\dot{\mu}}^2(H^{s+\frac{3}{4},0}_\Psi)}^2 + \frac{5}{\theta^E}\|\partial_y\theta_\Phi\|_{L_t^2(H^{s+1,0}_\Psi)}^2\Big), 
        \end{aligned}
    \end{equation}
    where $M$ is defined in \eqref{def:assumption}.
\end{proposition}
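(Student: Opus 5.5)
The plan is to obtain \eqref{ineq:u} from the relation $\partial_xu=\lambda+T_{\partial_yu}\int_0^y\mathcal{U}d\tilde y$, which follows from \eqref{def:lambda}, rather than from the $u$-equation directly. The structure of the right-hand side supports this reading. The terms $\|\lambda\|$, $\|\mathcal{U}\|$, $\|u\|_{L^2_{t,\dot\mu}}$ and $\|\theta\|$ are the outputs of Proposition \ref{prop:lambda}. The factor $M^2$ times the right-hand side of \eqref{ineq:mathcalU} points to the $\mathcal{U}$ estimate entering through the paraproduct $T_{\partial_yu}$.

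Concretely, I would split $\|u_\Phi\|_{H^{s+1,0}_\Psi}$ into a low-frequency part and $\|\partial_xu_\Phi\|_{H^{s,0}_\Psi}$, and do the same for the dissipative and $\dot\mu$-weighted norms. Applying $e^{\Phi}$ to \eqref{def:lambda}, Lemma \ref{lem:paraestimate} gives
\[
\|(T_{\partial_yu}\textstyle\int_0^y\mathcal{U})_\Phi\|_{H^{s,0}_\Psi}\le C\|\partial_yu_\Phi\|_{L^\infty_y(H^{\frac12+}_x)}\,\|\int_0^y\mathcal{U}_\Phi\|_{H^{s,0}_\Psi}.
\]
I would bound the factor $\|\int_0^y \mathcal{U}_\Phi\|$ by a weighted Hardy-type inequality, using the Gaussian weight and Lemma \ref{lem:Psi}, in terms of $\|\mathcal{U}_\Phi\|$ or $\|\partial_y\mathcal{U}_\Phi\|$. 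The factor $\|\partial_yu_\Phi\|_{L^\infty_y}$ I would control via Lemma \ref{lem:Poincare} and Assumption \ref{assumption} by $(1+t)^{1/4}M$ (using $\partial_y^2u$ if needed). Squaring produces the factor $C(1+t)^{1/2}M^2$, and the resulting $\mathcal{U}$ quantities are exactly the left-hand side of \eqref{ineq:mathcalU}. Substituting \eqref{ineq:mathcalU} then yields the last line of \eqref{ineq:u}.

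For the $\lambda$ contribution I would invoke Proposition \ref{prop:lambda} directly. Its left-hand side bounds $\|\lambda_\Phi(t)\|_{H^{s+\frac12,0}_\Psi}$, the $\dot\mu$-weighted $H^{s+\frac34}$ norm and $\|\partial_y\lambda_\Phi\|_{L^2_t(H^{s+\frac12,0}_\Psi)}$, which dominate the $H^{s}$-level norms of $\lambda$ required here. Its right-hand side supplies the initial term $\|\partial_xu_\Phi(0)\|_{H^{s+\frac12}}\le\|u_\Phi(0)\|_{H^{s+\frac32}}$, the terms $C\|u_\Phi\|_{L^2_{t,\dot\mu}(H^{s+5/4})}$ and $C\|\theta_\Phi\|$, and the $\eta$ terms. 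I would do the same for the $\partial_y$ and $\dot\mu$ parts. In $\partial_y(T_{\partial_yu}\int_0^y\mathcal{U})=T_{\partial_y^2u}\int_0^y\mathcal{U}+T_{\partial_yu}\mathcal{U}$, the first term is treated as above. The second is bounded by $M(1+t)^{1/4}\|\mathcal{U}_\Phi\|$, and its time integral falls under the $\dot\mu$ weight, since $\dot\mu\ge1$.

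The main obstacle is the regularity bookkeeping. $\lambda$ is estimated at $H^{s+\frac12}$ while $u$ is needed at $H^{s+1}$, and the $\dot\mu$-weighted norms require $u$ at $H^{s+\frac54}$ and $\mathcal{U}$ at $H^{s+\frac14}$. The time-derivative term must be matched with the initial data at $H^{s+\frac32}$. If the pure relation approach loses $\tfrac12$ derivative, my fallback is to run an energy estimate for $\partial_xu$ directly in $H^{s,0}_\Psi$. In that estimate the dangerous term $v\partial_y u$ would be rewritten using the cancellation $\partial_x u=\lambda+T_{\partial_yu}\int\mathcal{U}$. The $C\zeta(1+t)^{1/4}$ coefficients would come from the $T_\theta\partial_y^2$ term, handled as in the bound for $A_{3,2}$ in Lemma \ref{lem:rightofmathcalU}.
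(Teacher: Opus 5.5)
Your overall route is the paper's. You split $\partial_xu=\lambda+T_{\partial_yu}\int_0^y\mathcal{U}\,d\tilde y$, take the $\lambda$-part from Proposition \ref{prop:lambda}, and feed the paraproduct into Proposition \ref{prop:mathcalU} to produce the factor $C(1+t)^{\frac12}M^2$. However, your paraproduct bound puts the two factors in the wrong $y$-norms, and as written it fails.

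First, $\|\int_0^y\mathcal{U}_\Phi\,d\tilde y\|_{H^{s,0}_\Psi}$ is in general infinite. As $y\to\infty$, $\int_0^y\mathcal{U}\,d\tilde y$ tends to $\int_0^\infty\mathcal{U}\,d\tilde y$, which need not vanish: letting $y\to\infty$ in \eqref{eq:intmathcalU}, its time derivative is $-\partial_xv|_{y=\infty}$, and $v|_{y=\infty}=-\int_0^\infty\partial_xu\,dy+\int_0^\infty(\partial_yu)^2dy$. Meanwhile $e^{2\Psi}$ grows like a Gaussian. No Hardy-type inequality can help, and Lemma \ref{lem:Psi} requires decay at infinity.

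Second, Lemma \ref{lem:Poincare} applied to $\partial_yu$ gives $(1+t)^{\frac14}\|\partial_y^2u_\Phi\|_{H^{\frac12+,0}_\Psi}$. Assumption \ref{assumption} controls only $\partial_yu$ and $\partial_y\theta$, so $\|\partial_yu_\Phi\|_{L^\infty_y(H^{\frac12+}_x)}\lesssim(1+t)^{\frac14}M$ is not available. For the same reason your bound $M(1+t)^{\frac14}\|\mathcal{U}_\Phi\|_{H^{s,0}_\Psi}$ for $T_{\partial_yu}\mathcal{U}$ is unjustified, and $T_{\partial_y^2u}\int_0^y\mathcal{U}\,d\tilde y$ cannot be ``treated as above''.

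The repair is to swap the roles. Keep $\partial_yu$ in the weighted $L^2_y$ norm, where $\|\partial_yu_\Phi\|_{H^{\frac12+,0}_\Psi}\le M$ directly. Put the $\mathcal{U}$-factor in $L^\infty_y$:
\begin{itemize}
\item Cauchy--Schwarz against $e^{-2\Psi}$ gives $\sup_y\|\int_0^y\mathcal{U}_\Phi\,d\tilde y\|_{H^s_x}\le C(1+t)^{\frac14}\|\mathcal{U}_\Phi\|_{H^{s,0}_\Psi}$, as in the estimate of $A_5$.
\item Lemma \ref{lem:Poincare} gives $\|\mathcal{U}_\Phi\|_{L^\infty_y(H^s_x)}\le C(1+t)^{\frac14}\|\partial_y\mathcal{U}_\Phi\|_{H^{s,0}_\Psi}$.
\end{itemize}
Then $T_{\partial_yu}\int_0^y\mathcal{U}\,d\tilde y$ is bounded by $C(1+t)^{\frac12}M^2$ times the $L^\infty_t$ and $\dot\mu$-weighted parts of the left side of \eqref{ineq:mathcalU}. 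The term $T_{\partial_yu}\mathcal{U}$ is bounded by $C(1+t)^{\frac12}M^2$ times its dissipative part, not its $\dot\mu$-part.

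The term $T_{\partial_y^2u}\int_0^y\mathcal{U}\,d\tilde y$ is not controlled by $M$ at all. It is bounded by $C\int_0^t(1+t')^{\frac12}\|\partial_y^2u_\Phi\|_{H^{\frac12+,0}_\Psi}^2\|\mathcal{U}_\Phi\|_{H^{s,0}_\Psi}^2dt'\le C\|\mathcal{U}_\Phi\|^2_{L^2_{t,\dot\mu}(H^{s,0}_\Psi)}$, using the definition \eqref{eq:mu} of $\dot\mu$.

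Finally, you mention a low-frequency part of $\|u_\Phi\|_{H^{s+1,0}_\Psi}$ but never say how to control it, and the relation for $\partial_xu$ cannot do so. The paper needs a separate $H^{1,0}_\Psi$ energy estimate from the $u$-equation itself (Lemma \ref{lem:zeroorderofu}), whose loss term is bounded by $C\|\partial_xu_\Phi\|^2_{L^2_{t,\dot\mu}(H^{s+\frac14,0}_\Psi)}$. With these corrections the relation approach closes, and your fallback is unnecessary.
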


From the definition \eqref{def:lambda} of $\lambda$, we have
\begin{equation}\label{def:lambda2}
    \partial_xu=\lambda+T_{\partial_yu}\int_0^y\mathcal{U}d\tilde{y},
\end{equation}
thus one can obtain the $H^{s,0}_\Psi$ estimate of $\partial_xu$ once the corresponding estimates of $\lambda$ and $\mathcal{U}$ are derived. Therefore, before proceeding to the proof of Proposition \ref{prop:u}, we first give an estimate of $u$ itself as follows.
\begin{lemma}\label{lem:zeroorderofu}
    Under the same assumptions as in Proposition \ref{prop:u}, there holds
    \begin{equation}\label{ineq:zeroofu}
        \begin{aligned}
            &\|u_\Phi(t)\|_{H^{1,0}_\Psi}^2+2(\gamma-C_\eta)\|u_\Phi\|_{L_{t,\dot{\mu}}^2(H^{\frac{5}{4},0}_\Psi)}^2+(\theta^E-C\zeta(1+t)^\frac{1}{4}-2\eta)\|\partial_yu_\Phi\|_{L_t^2(H^{1,0}_\Psi)}^2\\
 & \leq \|u_\Phi(0)\|_{H^{1,0}_{\Psi_0}}^2+C\|\partial_xu_\Phi\|_{L_{t,\dot{\mu}}^2(H^{s+\frac{1}{4},0}_\Psi)}^2+2\eta\|\partial_y\theta_\Phi\|_{L_t^2(H^{1,0}_\Psi)}^2.
        \end{aligned}
    \end{equation}
\end{lemma}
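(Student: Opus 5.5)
We run a direct low-order energy estimate on the first equation of \eqref{eq:main}, in the spirit of Lemma \ref{lem:leftofmathcalU}. Apply $e^{\Phi(t,D_x)}$ to
\[
\partial_t u+u\partial_x u+v\partial_y u=(\theta+\theta^E)\partial_y^2u
\]
and take the $H^{1,0}_\Psi$ inner product with $u_\Phi$. The boundary conditions are $u|_{y=0}=0$ and decay at infinity.

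\textbf{Left-hand side.} Because of \eqref{eq:weightpro}, the computation of Lemma \ref{lem:leftofmathcalU} (with $\kappa$ chosen between $2$ and $4$) gives
\[
\tfrac12\tfrac{d}{dt}\|u_\Phi\|_{H^{1,0}_\Psi}^2+\gamma\dot\mu\|u_\Phi\|_{H^{5/4,0}_\Psi}^2+c\,\theta^E\|\partial_y u_\Phi\|_{H^{1,0}_\Psi}^2+c'\theta^E\|\partial_y\Psi u_\Phi\|_{H^{1,0}_\Psi}^2 .
\]
The weight term is kept to absorb $\partial_y\Psi$ contributions later. Here the boundary term vanishes since $u|_{y=0}=0$.

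\textbf{Right-hand side.} The point of the lemma is that at this low tangential regularity every loss of an $x$-derivative can be paid by $\|\partial_xu_\Phi\|_{H^{s+1/4,0}_\Psi}$. This is legitimate because $s>\frac52$, so $H^{s+1/4}$ is far above the $H^{5/4}$ level being estimated.

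\emph{Term $u\partial_xu$.} Paraproduct-decompose it. Then bound
\[
\|[u\partial_x u]_\Phi\|_{H^{3/4,0}_\Psi}\lesssim \|u_\Phi\|_{L^\infty_y(H^{1/2+}_x)}\|\partial_x u_\Phi\|_{H^{s+1/4,0}_\Psi}
\]
using Lemma \ref{lem:paraestimate}. Pair it against $\|u_\Phi\|_{H^{5/4,0}_\Psi}$. Lemma \ref{lem:Poincare} and the definition \eqref{eq:mu} of $\dot\mu$ turn this into
\[
C\dot\mu\bigl(\|u_\Phi\|^2_{H^{5/4,0}_\Psi}+\|\partial_xu_\Phi\|^2_{H^{s+1/4,0}_\Psi}\bigr).
\]

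\emph{Term $v\partial_yu$.} Use the third equation of \eqref{eq:main} to write $v=-\int_0^y\partial_xu\,d\tilde y+\partial_y\theta+\int_0^y(\partial_yu)^2\,d\tilde y$. The piece $\int_0^y\partial_xu$ is bounded in $L^\infty_y(H^{1+}_x)$ by $(1+t)^{1/4}\|\partial_xu_\Phi\|_{H^{s+1/4,0}_\Psi}$, as in \eqref{3.16}. The factor $\partial_yu$ is put in $L^2$ with an $\eta$-absorbed $\|\partial_yu_\Phi\|^2_{H^{1,0}_\Psi}$, and the rest goes into $C\dot\mu\|\partial_xu_\Phi\|^2_{H^{s+1/4,0}_\Psi}$. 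The $\partial_y\theta\,\partial_yu$ piece and the $\int(\partial_yu)^2$ piece are handled as $A_{2,2}$ and $A_{2,3}$. They produce either $C_\eta\dot\mu\|u_\Phi\|^2$ or $\eta\|\partial_y\theta_\Phi\|^2_{H^{1,0}_\Psi}$; for instance, $\|\partial_y\theta\|_{L^\infty}\|\partial_yu\|\,\|u\|$ is placed on the $\theta$ side.

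\emph{Diffusion term $\theta\partial_y^2u$.} Integrate by parts in $y$, which gives three contributions. The first, $-\langle[\theta\partial_yu]_\Phi,\partial_yu_\Phi\rangle$, is bounded by $C\zeta(1+t)^{1/4}\|\partial_yu_\Phi\|^2_{H^{1,0}_\Psi}$, using Lemma \ref{lem:Poincare} and Assumption \ref{assumption}. The second, $-\langle[\partial_y\theta\partial_yu]_\Phi,u_\Phi\rangle$, gives $\eta\|\partial_y u_\Phi\|^2+C_\eta\dot\mu\|u_\Phi\|^2$. The third, the $\partial_y\Psi$ term, is handled with Lemma \ref{lem:Psi} and the retained weight term.

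\textbf{Conclusion and main obstacle.} Integrate in time, using $\|u_\Phi\|_{H^{5/4}}\le\|\partial_xu_\Phi\|_{H^{s+1/4}}+\|u_\Phi\|_{H^{1}}$ at low frequencies; the latter is harmless since $\dot\mu\ge1$. This yields \eqref{ineq:zeroofu}.

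I expect the delicate point to be the $v\partial_yu$ term. The normal velocity costs a full $x$-derivative, so it must be controlled through $\partial_xu$ at the high index $s+\frac14$ rather than through $u$ itself. One also has to track the $\dot\mu$-weights carefully so that the constant in front of $\|\partial_xu_\Phi\|^2_{L^2_{t,\dot\mu}}$ is not required to be small.
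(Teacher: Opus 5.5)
Your argument is correct, but it takes a more direct route than the paper.

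\textbf{What the paper does.} It first rewrites the $u$-equation by Bony's decomposition as $\partial_t u-\theta^E\partial_y^2u=-T_u\partial_xu+T_{\partial_yu}\int_0^y\partial_xu\,d\tilde y+\mathfrak f$. It then handles the three pieces separately:
\begin{enumerate}
\item The transport term $T_u\partial_xu$ is treated exactly like $A_1$, using the commutator/symmetrization argument of Lemmas \ref{lem:commutator} and \ref{lem:commutator2}. This produces only $C\dot\mu\|u_\Phi\|^2_{H^{5/4,0}_\Psi}$.
\item The single derivative-losing term is $T_{\partial_yu}\int_0^y\partial_xu\,d\tilde y$. Here $\partial_yu$ is the low-frequency factor, so $(1+t)^{1/4}\|\partial_yu_\Phi\|_{H^{1/2+,0}_\Psi}\le\dot\mu$, and the loss is paid with $\|\partial_xu_\Phi\|_{H^{s+1/4,0}_\Psi}$.
\item The loss-free remainder $\mathfrak f$ is handled by appeal to the analytic-setting paper.
\end{enumerate}

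\textbf{What you do instead.} You skip the symmetrization entirely. You pay for the lost $x$-derivative, both in $u\partial_xu$ and in the full product $\bigl(\int_0^y\partial_xu\,d\tilde y\bigr)\partial_yu$, with the high norm $\|\partial_xu_\Phi\|_{H^{s+1/4,0}_\Psi}$. This is legitimate precisely because the estimate is at the $H^1$ level while $s+\frac14>\frac{11}{4}$. The remaining terms you treat by hand.

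\textbf{Comparison.} Your route is more elementary and self-contained: it needs no commutator lemma and no external reference. The price is that more is fed into $\|\partial_xu_\Phi\|^2_{L^2_{t,\dot\mu}(H^{s+1/4,0}_\Psi)}$, possibly with an $\eta$-dependent constant from your Young inequality in the $v\partial_yu$ term. This is harmless: in Proposition \ref{prop:u} that term is bounded by $\|u_\Phi\|^2_{L^2_{t,\dot\mu}(H^{s+5/4,0}_\Psi)}$ and absorbed once $\gamma$ is chosen after $\eta$. The paper's decomposition, for its part, keeps the transport term at the natural $H^{5/4}$ level and mirrors the high-order estimates for $\mathcal U,\lambda,\varphi$.

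\textbf{Two small corrections.}
\begin{enumerate}
\item In your bound $\|[u\partial_xu]_\Phi\|_{H^{3/4,0}_\Psi}\lesssim\|u_\Phi\|_{L^\infty_y(H^{1/2+}_x)}\|\partial_xu_\Phi\|_{H^{s+1/4,0}_\Psi}$, the paraproduct piece $T_{\partial_xu}u$ takes its output frequency from $u$. So $H^{1/2+}_x$ is not enough for that piece. Use $\|u_\Phi\|_{L^\infty_y(H^{1}_x)}$ instead, or put $\partial_xu$ in $L^\infty_y(H^{1/2+}_x)$ and $u$ in $H^{3/4,0}_\Psi$. Either version is still controlled by $\dot\mu$ via Lemma \ref{lem:Poincare}, so your conclusion stands.
\item The low-frequency splitting in your closing step is unnecessary. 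All the terms $C_\eta\dot\mu\|u_\Phi\|^2_{H^{1,0}_\Psi}$ and $C\dot\mu\|u_\Phi\|^2_{H^{5/4,0}_\Psi}$ are simply absorbed by $\gamma\dot\mu\|u_\Phi\|^2_{H^{5/4,0}_\Psi}$. That absorption is exactly what produces the factor $2(\gamma-C_\eta)$ on the left of \eqref{ineq:zeroofu}.
\end{enumerate}
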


\begin{proof}
By using Bony's decomposition to the equation of $u$ in \eqref{eq:main}, we can write
\begin{equation}\label{eq:u}
    \partial_t u-\theta^E\partial_y^2 u=-T_u\partial_x u+T_{\partial_yu}\int_0^y\partial_xu d\tilde{y}+\mathfrak{f},
\end{equation}
where 
\begin{equation}
    \begin{aligned}
        \mathfrak{f}\triangleq&-T_{\partial_xu}u-R(u,\partial_xu)+T_{\int_0^y\partial_xud\tilde{y}}\partial_yu+R\left(\int_0^y\partial_xu d\tilde{y},\partial_yu\right)\\
        &-\partial_y\theta\partial_yu-\int_0^y(\partial_yu)^2 d\tilde{y}\partial_yu+\theta\partial_y^2u.
    \end{aligned}
\end{equation}
Applying the operator $e^{\Phi(t,D_x)}$ to \eqref{eq:u} and taking $H^{1,0}_\Psi$ inner product with $u_\Phi$, we obtain
\begin{equation}\label{eq:uPhi}
    \begin{aligned}
        &\langle\partial_tu_\Phi,u_\Phi\rangle_{H^{1,0}_\Psi}+\gamma\langle\dot{\mu}\langle D_x\rangle^\frac{1}{2}u_\Phi,u_\Phi\rangle_{H^{1,0}_\Psi}-\theta^E\langle\partial_y^2u_\Phi,u_\Phi\rangle_{H^{1,0}_\Psi}\\
        & =-\langle[T_u\partial_xu]_\Phi,u_\Phi\rangle_{H^{1,0}_\Psi}+\langle\left[T_{\partial_yu}\int_0^y\partial_xud\tilde{y}\right]_\Phi,u_\Phi\rangle_{H^{1,0}_\Psi}+\langle\mathfrak{f}_\Phi,u_\Phi\rangle_{H^{1,0}_\Psi}.
    \end{aligned}
\end{equation}
First, as in Lemma \ref{lem:leftofmathcalU}, the left-hand side of \eqref{eq:uPhi} is estimated as:
\begin{equation}
    \begin{aligned}
        &\langle\partial_tu_\Phi,u_\Phi\rangle_{H^{1,0}_\Psi}+\gamma\langle\dot{\mu}(t)\langle D_x\rangle^\frac{1}{2}u_\Phi,u_\Phi\rangle_{H^{1,0}_\Psi}-\theta^E\langle\partial_y^2u_\Phi,u_\Phi\rangle_{H^{1,0}_\Psi}\\
        &  \geq\frac{1}{2}\frac{d}{dt}\|u_\Phi(t)\|_{H^{1,0}_\Psi}^2 + \gamma\dot{\mu}(t)\|u_\Phi\|_{H^{\frac{5}{4},0}_\Psi}^2+\frac{1}{2}\theta^E\|\partial_yu_\Phi\|_{H^{1,0}_\Psi}^2.
    \end{aligned}
\end{equation}
To estimate the right-hand side of \eqref{eq:uPhi}, first, in a way similar to the study of $A_1$ given in Lemma \ref{lem:rightofmathcalU}, we have
\begin{equation}
    |\langle [T_u \partial_x u]_\Phi, u_\Phi \rangle_{H^{1,0}_\Psi}|\leq C(1+t)^{\frac{1}{4}}\|\partial_yu_\Phi\|_{H^{\frac{3}{2}+,0}_\Psi}\|u_\Phi\|_{H^{\frac{5}{4},0}_\Psi}^2\leq C\dot{\mu}(t)\|u_\Phi\|_{H^{\frac{5}{4},0}_\Psi}^2.       
\end{equation}
Moreover, by using Lemmas \ref{lem:Poincare} and  \ref{lem:paraestimate} it follows that
\begin{equation}
    \begin{aligned}
        &\left|\langle \left[T_{\partial_y u} \int_0^y \partial_x u \, d\tilde{y}\right]_\Phi, u_\Phi \rangle_{H^{1,0}_\Psi}\right| \leq C (1+t)^{\frac{1}{4}} \|\partial_y u_\Phi\|_{H^{\frac{1}{2}+,0}_\Psi} \|\partial_x u_\Phi\|_{H^{1,0}_\Psi} \|u_\Phi\|_{H^{1,0}_\Psi} \\
        &\quad \leq C \dot{\mu}(t) \|\partial_x u_\Phi\|_{H^{s+\frac{1}{4},0}_\Psi} \|u_\Phi\|_{H^{1,0}_\Psi}  \leq C \dot{\mu}(t) \left( \|\partial_x u_\Phi\|_{H^{s+\frac{1}{4},0}_\Psi}^2 + \|u_\Phi\|_{H^{1,0}_\Psi}^2 \right).
    \end{aligned}
\end{equation}
The following estimate of $\langle\mathfrak{f}_\Phi,u_\Phi\rangle_{H^{1,0}_\Psi}$ can be obtained in a way similar to the proof of Proposition 3.2 given in \cite{wang2025compressible}, 
\begin{equation}
    \begin{aligned}
        &|\langle\mathfrak{f}_\Phi,u_\Phi\rangle_{H^{1,0}_\Psi}|\leq C_\eta\dot{\mu}(t)\|u_\Phi\|_{H^{1,0}_\Psi}^2+(\eta+C\zeta(1+t)^\frac{1}{4})\|\partial_yu_\Phi\|_{H^{1,0}_\Psi}^2+\eta\|\partial_y\theta_\Phi\|_{H^{1,0}_\Psi}^2.
    \end{aligned}
\end{equation}

Combining the above estimates and integrating the resulting inequality over $[0,t]$, it follows the estimate \eqref{ineq:zeroofu} immediately.

.
\end{proof}

Next let us turn to the proof of the $H^{s+1,0}_\Psi$ estimate of $u$.
\begin{proof}[Proof of Proposition \ref{prop:u}]
By using \eqref{def:lambda2}, a direct calculation gives that
\begin{equation}\label{ineq:partialxu}
    \begin{aligned}
        &\|\partial_x u_\Phi(t)\|_{H^{s,0}_\Psi}^2 
        + 2(\gamma-C_\eta)\|\partial_x u_\Phi\|_{L_{t,\dot{\mu}}^2(H^{s+\frac{1}{4},0}_\Psi)}^2 
        + (\theta^E-C\zeta(1+t)^\frac{1}{4}-2\eta) \|\partial_y \partial_x u_\Phi\|_{L_t^2(H^{s,0}_\Psi)}^2 \\
        \leq& \|\lambda_\Phi(t)\|_{H^{s,0}_\Psi}^2 
        + 2(\gamma-C_\eta) \|\lambda_\Phi\|_{L_{t,\dot{\mu}}^2(H^{s+\frac{1}{4},0}_\Psi)}^2 
        + (\theta^E-C\zeta(1+t)^\frac{1}{4}-2\eta) \|\partial_y \lambda_\Phi\|_{L_t^2(H^{s,0}_\Psi)}^2 \\
        & + \|\left[T_{\partial_y u} \int_0^y \mathcal{U} d\tilde{y}\right]_\Phi(t)\|_{H^{s,0}_\Psi}^2 
        + 2(\gamma-C_\eta) \|\left[T_{\partial_y u} \int_0^y \mathcal{U} d\tilde{y}\right]_\Phi\|_{L_{t,\dot{\mu}}^2(H^{s+\frac{1}{4},0}_\Psi)}^2 \\
        & + (\theta^E-C\zeta(1+t)^\frac{1}{4}-2\eta) \|[T_{\partial_y u} \mathcal{U}]_\Phi\|_{L_t^2(H^{s,0}_\Psi)}^2
        + \theta^E \|\left[T_{\partial_y^2 u} \int_0^y \mathcal{U} d\tilde{y}\right]_\Phi\|_{L_t^2(H^{s,0}_\Psi)}^2. 
    \end{aligned}
\end{equation}
Thanks to Lemmas \ref{lem:paraestimate}, \ref{lem:Poincare} and Assumption \ref{assumption}, we can estimate the last four terms as follows:
\begin{equation}\label{ineq:TpartialyuintU1}
\begin{aligned}
    &\big\| \left[T_{\partial_y u} \int_0^y \mathcal{U} \, d\tilde{y}\right]_\Phi(t) \big\|_{H^{s,0}_\Psi}^2 
    + 2(\gamma - C_\eta) \big\| \left[T_{\partial_y u} \int_0^y \mathcal{U} \, d\tilde{y}\right]_\Phi \big\|_{L_{t,\dot{\mu}}^2(H^{s+\frac{1}{4},0}_\Psi)}^2 \\
    &\quad + \big(\theta^E - C\zeta (1+t)^{\frac{1}{4}} - 2\eta\big) \big\| [T_{\partial_y u} \mathcal{U}]_\Phi \big\|_{L_t^2(H^{s,0}_\Psi)}^2 \\
    &\leq C (1+t)^{\frac{1}{2}} \|\partial_y u_\Phi\|_{L_t^\infty(H^{\frac{1}{2}+,0}_\Psi)}^2 \Big(
        \|\mathcal{U}_\Phi(t)\|_{H^{s,0}_\Psi}^2 
        + 2(\gamma - C\eta) \|\mathcal{U}_\Phi\|_{L_{t,\dot{\mu}}^2(H^{s+\frac{1}{4},0}_\Psi)}^2 \\
    &\hspace{7cm}
        + (\theta^E - C\zeta (1+t)^{\frac{1}{4}} - 2\eta) \|\partial_y \mathcal{U}_\Phi\|_{L_t^2(H^{s,0}_\Psi)}^2
    \Big) \\
    &\leq C (1+t)^{\frac{1}{2}} M^2 \Big(
        \|\mathcal{U}_\Phi(t)\|_{H^{s,0}_\Psi}^2 
        + 2(\gamma - C\eta) \|\mathcal{U}_\Phi\|_{L_{t,\dot{\mu}}^2(H^{s+\frac{1}{4},0}_\Psi)}^2 \\
    &\hspace{7cm}
        + (\theta^E - C\zeta (1+t)^{\frac{1}{4}} - 2\eta) \|\partial_y \mathcal{U}_\Phi\|_{L_t^2(H^{s,0}_\Psi)}^2
    \Big)
\end{aligned}
\end{equation}
and
\begin{equation}\label{ineq:TpartialyuintU2}
    \begin{aligned}
        \theta^E \|\left[T_{\partial_y^2 u} \int_0^y \mathcal{U} d\tilde{y}\right]_\Phi\|_{L_t^2(H^{s,0}_\Psi)}^2\leq C\int_0^t(1+t')^\frac{1}{2}\|\partial_y^2u_\Phi\|_{H^{\frac{1}{2}+,0}_\Psi}^2\|\mathcal{U}_\Phi\|_{H^{s,0}_\Psi}^2 dt'\leq C\|\mathcal{U}_\Phi\|_{L_{t,\dot{\mu}}^2(H^{s,0}_\Psi)}^2.
    \end{aligned}
\end{equation}

By combining Proposition \ref{prop:mathcalU}, Proposition \ref{prop:lambda}, \eqref{ineq:partialxu}-\eqref{ineq:TpartialyuintU2} and Lemma \ref{lem:zeroorderofu}, we obtain the desired estimate.
\end{proof}

\begin{proposition}\label{prop:theta}
    Under the same assumptions as given in Proposition \ref{prop:u}, there holds
    \begin{equation}\label{ineq:theta}
        \begin{aligned}
            &\|\theta_\Phi(t)\|_{H^{s+1,0}_\Psi}^2 + 2(\gamma-C_\eta)\|\theta_\Phi\|_{L_{t,\dot{\mu}}^2(H^{s+\frac{5}{4},0}_\Psi)}^2 + (\theta^E-C\zeta(1+t)^\frac{1}{4}-2\eta) \|\partial_y \theta_\Phi\|_{L_t^2(H^{s+1,0}_\Psi)}^2 \\
            \leq& \|\theta_\Phi(0)\|_{H^{s+1,0}_{\Psi_0}}^2 + C\|u_\Phi\|_{L_{t,\dot{\mu}}^2(H^{s+\frac{5}{4},0}_\Psi)}^2 + 4\eta\|\partial_y u_\Phi\|_{L_t^2(H^{s+1,0}_\Psi)}^2 + C\|\theta_\Phi\|_{L_{t,\dot{\mu}}^2(H^{s+1,0}_\Psi)}^2 \\
            & + (2\eta+C\zeta(1+t)^\frac{1}{4})\|\partial_y\theta_\Phi\|_{L_t^2(H^{s+1,0}_\Psi)}^2 + C\|\mathcal{U}_\Phi\|_{L_{t,\dot{\mu}}^2(H^{s,0}_\Psi)}^2 + 2\eta\|\partial_y \mathcal{U}_\Phi\|_{L_t^2(H^{s-1,0}_\Psi)}^2 \\
            &+ C(1+t)^\frac{1}{2}\zeta^2\Big(2\eta\|\partial_yu_\Phi\|_{L_t^2(H^{s+1,0}_\Psi)}^2 + C\|\lambda_\Phi\|_{L_{t,\dot{\mu}}^2(H^{s+\frac{3}{4},0}_\Psi)}^2 + \frac{5}{\theta^E}\|\partial_y\theta_\Phi\|_{L_t^2(H^{s+1,0}_\Psi)}^2\Big).
        \end{aligned}
    \end{equation}
\end{proposition}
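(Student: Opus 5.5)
The proof will mirror that of Proposition \ref{prop:u}, with $\varphi$ playing the role of $\lambda$. It has three steps: a low-frequency estimate for $\theta$, recovery of $\partial_x\theta$ from $\varphi$ and $\mathcal{U}$, and substitution of Propositions \ref{prop:phi} and \ref{prop:mathcalU}.

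\emph{Low-frequency estimate.} First we prove an analogue of Lemma \ref{lem:zeroorderofu} for $\theta$ in $H^{1,0}_\Psi$. We write the $\theta$-equation of \eqref{eq:main} in Bony form,
$$\partial_t\theta-\theta^E\partial_y^2\theta=-T_u\partial_x\theta+T_{\partial_y\theta}\int_0^y\partial_xu\,d\tilde y+\mathfrak g,$$
where $\mathfrak g$ collects the remainders together with $-\partial_y\theta(\partial_y^2\theta+\int_0^y(\partial_yu)^2d\tilde y)$, $\theta\partial_y^2\theta$ and $(\theta+\theta^E)(\partial_yu)^2$. We apply $e^{\Phi}$ and take the $H^{1,0}_\Psi$ inner product with $\theta_\Phi$. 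The left side is bounded from below exactly as in Lemma \ref{lem:leftofmathcalU}.

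The transport term is handled as $A_1$, giving $C\dot\mu\|\theta_\Phi\|^2_{H^{5/4,0}_\Psi}$. The term $T_{\partial_y\theta}\int_0^y\partial_xu$ is bounded by $C\dot\mu(\|\partial_xu_\Phi\|^2_{H^{s+1/4,0}_\Psi}+\|\theta_\Phi\|^2_{H^{1,0}_\Psi})$, using Lemma \ref{lem:Poincare} in $y$. The terms with $\theta\partial_y^2\theta$ are integrated by parts in $y$ (using $\partial_y\theta|_{y=0}=0$) and produce the $C\zeta(1+t)^{1/4}\|\partial_y\theta_\Phi\|^2$ contribution, as in $A_{3,2}$. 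The source $(\theta+\theta^E)(\partial_yu)^2$ is bounded by $C_\eta\dot\mu\|\theta_\Phi\|^2_{H^{1,0}_\Psi}+\eta\|\partial_yu_\Phi\|^2_{H^{1,0}_\Psi}$. Integrating in time yields the low-frequency bound, with $\|u_\Phi\|_{L^2_{t,\dot\mu}(H^{s+5/4,0}_\Psi)}$ appearing in place of $\partial_xu$.

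\emph{High frequencies via $\varphi$.} We use \eqref{def:varphi} in the form $\partial_x\theta=\varphi+T_{\partial_y\theta}\int_0^y\mathcal{U}\,d\tilde y$. The $H^{s,0}$-type energy of $\partial_x\theta_\Phi$ is then bounded, as in \eqref{ineq:partialxu}, by the energy of $\varphi$ plus that of $T_{\partial_y\theta}\int_0^y\mathcal U$. The $y$-derivative of the latter produces $T_{\partial_y\theta}\mathcal U+T_{\partial_y^2\theta}\int_0^y\mathcal U$. By Lemmas \ref{lem:paraestimate} and \ref{lem:Poincare} together with Assumption \ref{assumption} (now with $\|\partial_y\theta_\Phi\|_{H^{s,0}_\Psi}\le\zeta$ instead of $M$), this part is bounded by $C(1+t)^{1/2}\zeta^2$ times the left side of \eqref{ineq:mathcalU}, plus $C\|\mathcal U_\Phi\|^2_{L^2_{t,\dot\mu}(H^{s,0}_\Psi)}$, the latter coming from $\partial_y^2\theta$ through the definition \eqref{eq:mu} of $\dot\mu$.

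\emph{Assembly.} We insert \eqref{ineq:phi} for the $\varphi$ part, \eqref{ineq:mathcalU} for the $\mathcal U$ part, and add the low-frequency bound. Note that $\|\partial_x\theta_\Phi(0)\|_{H^{s,0}}+\|\theta_\Phi(0)\|_{H^{1,0}}\lesssim\|\theta_\Phi(0)\|_{H^{s+1,0}}$. This gives \eqref{ineq:theta} with exactly the $\zeta^2$ factor in front of the right side of \eqref{ineq:mathcalU}.

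The main obstacle is the low-frequency step. There, $v\partial_y\theta$ contains $\partial_y\theta\,\partial_y^2\theta$ and $\theta\partial_y^2\theta$, which must be absorbed using only the smallness $\zeta$ rather than through a $\dot\mu$ factor. I would follow the treatment of $A_{2,2}$ and $A_{3,2}$ and rely on the smallness in Assumption \ref{assumption} to keep the dissipation coefficient positive.
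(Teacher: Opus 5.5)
Your proposal follows the paper's proof step for step. The paper proves Lemma \ref{lem:zeroorderoftheta} by testing the Bony-form $\theta$-equation in $H^{1,0}_\Psi$, bounding the $(\theta+\theta^E)(\partial_yu)^2$ source by $C_\eta\dot\mu\|\theta_\Phi\|^2_{H^{1,0}_\Psi}+\eta\|\partial_yu_\Phi\|^2_{H^{1,0}_\Psi}$; it then uses \eqref{3.75} with $\zeta$ replacing $M$ and inserts Propositions \ref{prop:phi} and \ref{prop:mathcalU}.

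One small slip, which does not affect the argument: since $v=-\int_0^y\partial_xu\,d\tilde y+\partial_y\theta+\int_0^y(\partial_yu)^2d\tilde y$, the term of $\mathfrak g$ coming from $v\partial_y\theta$ is $-(\partial_y\theta)^2$, not $-\partial_y\theta\,\partial_y^2\theta$ (that product only appears after differentiating in $y$). It is handled through $\dot\mu$ and $\eta$ like the other quadratic terms, so the smallness of $\zeta$ is needed only for $T_\theta\partial_y^2\theta$, exactly as in $A_{3,2}$.
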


Similar to the proof of Proposition \ref{prop:u}, we can derive the above estimate from 
\begin{equation}\label{3.75}
    \partial_x\theta=\varphi+T_{\partial_y\theta}\int_0^y\mathcal{U}d\tilde{y},
\end{equation}
and the following lemma of the lower order estimate of $\theta$ itself. 

\begin{lemma}\label{lem:zeroorderoftheta}
    Under the same conditions as in Proposition \ref{prop:u}, there holds
    \begin{equation}\label{ineq:zerooftheta}
        \begin{aligned}
            &\|\theta_\Phi(t)\|_{H^{1,0}_\Psi}^2+2(\gamma-C_\eta)\|\theta_\Phi\|_{L_{t,\dot{\mu}}^2(H^{\frac{5}{4},0}_\Psi)}^2+(\theta^E-C\zeta(1+t)^\frac{1}{4}-2\eta)\|\partial_y\theta_\Phi\|_{L_t^2(H^{1,0}_\Psi)}^2\\
            \leq& \|\theta_\Phi(0)\|_{H^{1,0}_{\Psi_0}}^2+C\|u_\Phi\|_{L_{t,\dot{\mu}}^2(H^{1,0}_\Psi)}^2+C\|\partial_xu_\Phi\|_{L_{t,\dot{\mu}}^2(H^{s+\frac{1}{4},0}_\Psi)}^2+2\eta\|\partial_yu_\Phi\|_{L_t^2(H^{1,0}_\Psi)}^2.
        \end{aligned}
    \end{equation}
\end{lemma}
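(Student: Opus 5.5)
The plan follows the proof of Lemma \ref{lem:zeroorderofu} line by line, now for the temperature equation. Using the divergence relation $v=-\int_0^y\partial_xu\,d\tilde y+\partial_y\theta+\int_0^y(\partial_yu)^2d\tilde y$ and Bony's decomposition \eqref{eq:bony}, we rewrite the second equation of \eqref{eq:main} as
\begin{equation*}
\partial_t\theta-\theta^E\partial_y^2\theta=-T_u\partial_x\theta+T_{\partial_y\theta}\int_0^y\partial_xu\,d\tilde y+\mathfrak{g},
\end{equation*}
where $\mathfrak{g}$ collects the remaining terms. These are $-T_{\partial_x\theta}u-R(u,\partial_x\theta)$, $T_{\int_0^y\partial_xu}\partial_y\theta+R(\int_0^y\partial_xu,\partial_y\theta)$, $-(\partial_y\theta)^2$, $-\int_0^y(\partial_yu)^2d\tilde y\,\partial_y\theta$, $\theta\partial_y^2\theta$, and the dissipative source $(\theta+\theta^E)(\partial_yu)^2$. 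We apply $e^{\Phi(t,D_x)}$ and take the $H^{1,0}_\Psi$ inner product with $\theta_\Phi$. The boundary term from integrating by parts vanishes since $\partial_y\theta|_{y=0}=0$. Exactly as in Lemma \ref{lem:leftofmathcalU}, using \eqref{eq:weightpro} with $\kappa=2$, the left-hand side is bounded below by
\begin{equation*}
\frac12\frac{d}{dt}\|\theta_\Phi\|_{H^{1,0}_\Psi}^2+\gamma\dot\mu\|\theta_\Phi\|_{H^{\frac54,0}_\Psi}^2+\frac12\theta^E\|\partial_y\theta_\Phi\|_{H^{1,0}_\Psi}^2 .
\end{equation*}

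For the transport term $-T_u\partial_x\theta$ we argue as for $A_1$ in Lemma \ref{lem:rightofmathcalU}. Lemma \ref{lem:commutator2} handles the commutator with $e^\Phi$, and Lemma \ref{lem:commutator} together with the antisymmetrization trick handles the rest. This gives a bound $C\dot\mu\|\theta_\Phi\|_{H^{\frac54,0}_\Psi}^2$. The term $T_{\partial_y\theta}\int_0^y\partial_xu$ is the one where the derivative loss in $u$ appears. We bound it by Lemmas \ref{lem:paraestimate} and \ref{lem:Poincare}:
\begin{equation*}
C(1+t)^{\frac14}\|\partial_y\theta_\Phi\|_{H^{\frac12+,0}_\Psi}\|\partial_xu_\Phi\|_{H^{1,0}_\Psi}\|\theta_\Phi\|_{H^{1,0}_\Psi}\le C\dot\mu\big(\|\partial_xu_\Phi\|_{H^{s+\frac14,0}_\Psi}^2+\|\theta_\Phi\|_{H^{1,0}_\Psi}^2\big),
\end{equation*}
since $s>\frac52$. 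This produces the $\|\partial_xu_\Phi\|_{L^2_{t,\dot\mu}(H^{s+\frac14,0}_\Psi)}$ term on the right-hand side.

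The terms in $\mathfrak{g}$ are treated as in \cite{wang2025compressible}. The transport remainders $T_{\partial_x\theta}u$ and $R(u,\partial_x\theta)$ put the low frequency on $\partial_x\theta$, and are bounded by $C\dot\mu\|\theta_\Phi\|_{H^{1,0}_\Psi}\|\ldots\|$. The terms $(\partial_y\theta)^2$, $\theta\partial_y^2\theta$ and the cubic term are handled by \eqref{ineq:nonlinear}. For $\theta\partial_y^2\theta$ we first integrate by parts in $y$, which produces the $C\zeta(1+t)^{\frac14}\|\partial_y\theta_\Phi\|^2_{H^{1,0}_\Psi}$ term via Assumption \ref{assumption}. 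The $\partial_y\Psi$ contributions are absorbed by Lemma \ref{lem:Psi}. The genuinely new term is the source $\langle[(\theta+\theta^E)(\partial_yu)^2]_\Phi,\theta_\Phi\rangle_{H^{1,0}_\Psi}$, and this is where I expect the main care is needed, because the constant $\theta^E$ is not small. I would split $(\partial_yu)^2$ by Bony into $2T_{\partial_yu}\partial_yu+R$, place one factor in $L^\infty_y$ via Lemma \ref{lem:Poincare}, and obtain
\begin{equation*}
C(1+t)^{\frac14}\|\partial_y^2u_\Phi\|_{H^{\frac12+,0}_\Psi}\|\partial_yu_\Phi\|_{H^{1,0}_\Psi}\|\theta_\Phi\|_{H^{1,0}_\Psi}\le \eta\|\partial_yu_\Phi\|^2_{H^{1,0}_\Psi}+C_\eta\dot\mu\|\theta_\Phi\|^2_{H^{1,0}_\Psi},
\end{equation*}
using the $(1+t)^{\frac12}\|\partial_y^2u_\Phi\|^2$ term in \eqref{eq:mu}. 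The alternative route bounds it by $C\dot\mu\|u_\Phi\|^2_{H^{1,0}_\Psi}$, which explains the $\|u_\Phi\|_{L^2_{t,\dot\mu}(H^{1,0}_\Psi)}$ term in the statement. The $\theta$-factor is handled the same way with an extra $\zeta$.

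Collecting everything, any $\dot\mu\|\theta_\Phi\|^2_{H^{1,0}_\Psi}$ contribution is absorbed by $C_\eta\dot\mu\|\theta_\Phi\|^2_{H^{\frac54,0}_\Psi}$. Integrating over $[0,t]$ then yields \eqref{ineq:zerooftheta}.
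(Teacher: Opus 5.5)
Your proposal is correct and follows essentially the same route as the paper: the same Bony rewriting of the $\theta$-equation with the same remainder $\mathfrak{g}$, and the same $H^{1,0}_\Psi$ energy estimate as in Lemma \ref{lem:zeroorderofu}. The only new term, $(\theta+\theta^E)(\partial_yu)^2$, is treated as in the paper via $(\partial_yu)^2=2T_{\partial_yu}\partial_yu+R(\partial_yu,\partial_yu)$, Lemma \ref{lem:Poincare} and Young's inequality, and the remaining terms are deferred to \cite{wang2025compressible}. One small clarification: the $\|u_\Phi\|_{L^2_{t,\dot\mu}(H^{1,0}_\Psi)}$ term on the right is most naturally produced by $T_{\partial_x\theta}u+R(u,\partial_x\theta)$, where your unspecified factor ``$\|\ldots\|$'' is $\|u_\Phi\|_{H^{1,0}_\Psi}$, rather than by an alternative bound on the source term.
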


\begin{proof}
    Applying Bony's decomposition to the equation of $\theta$ given in \eqref{eq:main}, one obtains
    \begin{equation}\label{eq:theta}
        \partial_t\theta-\theta^E\partial_y^2\theta=-T_u\partial_x\theta+T_{\partial_y\theta}\int_0^y\partial_xu d\tilde{y}+\mathfrak{g},
    \end{equation}
    where
    \begin{equation}
        \begin{aligned}
            \mathfrak{g}\triangleq&-T_{\partial_x\theta}u-R(u,\partial_x\theta)+T_{\int_0^y\partial_xud\tilde{y}}\partial_y\theta+R\left(\int_0^y\partial_xu d\tilde{y},\partial_y\theta\right)\\
            &-(\partial_y\theta)^2-\int_0^y(\partial_yu)^2 d\tilde{y}\partial_y\theta+\theta\partial_y^2\theta+(\theta+\theta^E)(\partial_yu)^2.
        \end{aligned}
    \end{equation}
    Applying the operator $e^{\Phi(t,D_x)}$ to \eqref{eq:theta} and taking $H^{1,0}_\Psi$ inner product with $\theta_\Phi$, we obtain
    \begin{equation}\label{3.79}
        \begin{aligned}
            &\langle\partial_t\theta_\Phi,\theta_\Phi\rangle_{H^{1,0}_\Psi}+\gamma\langle\dot{\mu}\langle D_x\rangle^\frac{1}{2}\theta_\Phi,\theta_\Phi\rangle_{H^{1,0}_\Psi}-\theta^E\langle\partial_y^2\theta_\Phi,\theta_\Phi\rangle_{H^{1,0}_\Psi}\\
            =&-\langle[T_u\partial_x\theta]_\Phi,\theta_\Phi\rangle_{H^{1,0}_\Psi}+\langle\left[T_{\partial_y\theta}\int_0^y\partial_xud\tilde{y}\right]_\Phi,\theta_\Phi\rangle_{H^{1,0}_\Psi}+\langle\mathfrak{g}_\Phi,\theta_\Phi\rangle_{H^{1,0}_\Psi}.
        \end{aligned}
    \end{equation}
    It is noted that there is only a slight difference in the estimate for $\langle[(\theta+\theta^E)(\partial_yu)^2]_\Phi,\theta_\Phi\rangle_{H^{1,0}_\Psi}$ in $\langle\mathfrak{g}_\Phi,\theta_\Phi\rangle_{H^{1,0}_\Psi}$ compared with the proof of Proposition 3.3 in \cite{wang2025compressible}. In particular, for the term $\langle[T_\theta (\partial_yu)^2]_\Phi,\theta_\Phi\rangle_{H^{1,0}_\Psi}$ obtained from Bony's decomposition, we apply the estimates from Lemmas \ref{lem:paraestimate} and \ref{lem:Poincare} twice to derive the following inequality:
    \begin{equation}
        \begin{aligned}
            |\langle[T_\theta (\partial_yu)^2]_\Phi,\theta_\Phi\rangle_{H^{1,0}_\Psi}|\leq& 2|\langle[T_\theta T_{\partial_yu}\partial_yu]_\Phi,\theta_\Phi\rangle_{H^{1,0}_\Psi}|+|\langle[T_\theta R(\partial_yu,\partial_yu)]_\Phi,\theta_\Phi\rangle_{H^{1,0}_\Psi}|\\
            \leq& C(1+t)^\frac{1}{2}\|\partial_y\theta_\Phi\|_{H^{\frac{1}{2}+,0}_\Psi}\|\partial_y^2u_\Phi\|_{H^{\frac{1}{2}+,0}_\Psi}\|\partial_yu_\Phi\|_{H^{1,0}_\Psi}\|\theta_\Phi\|_{H^{1,0}_\Psi}\\
            \leq& C_\eta(1+t)\|\partial_y\theta_\Phi\|_{H^{\frac{1}{2}+,0}_\Psi}^2\|\partial_y^2u_\Phi\|_{H^{\frac{1}{2}+,0}_\Psi}^2\|\theta_\Phi\|_{H^{1,0}_\Psi}^2+\eta\|\partial_yu_\Phi\|_{H^{1,0}_\Psi}^2\\
            \leq& C_\eta\dot{\mu}(t)\|\theta_\Phi\|_{H^{1,0}_\Psi}^2+\eta\|\partial_yu_\Phi\|_{H^{1,0}_\Psi}^2.
        \end{aligned}
    \end{equation} 
    Thus in the same way as the proof of Lemma \ref{lem:zeroorderofu}, we can obtain the estimate \eqref{ineq:zerooftheta} from \eqref{3.79}.
\end{proof}

\begin{proof}[Proof of Proposition \ref{prop:theta}]
    The proof follows a similar approach to that of Proposition \ref{prop:u}. By using Lemmas \ref{lem:paraestimate}, \ref{lem:Poincare} and Assumption \ref{assumption}, from \eqref{3.75} we have
    \begin{equation}
        \begin{aligned}
            &\|\partial_x\theta_\Phi(t)\|_{H^{s,0}_\Psi}^2+2(\gamma-C_\eta)\|\partial_x\theta_\Phi\|_{L_{t,\dot{\mu}}^2(H^{s+\frac{1}{4},0}_\Psi)}^2+(\theta^E-C\zeta(1+t)^\frac{1}{4}-2\eta)\|\partial_y\partial_x\theta_\Phi\|_{L_t^2(H^{s,0}_\Psi)}^2\\
            \leq& \|\varphi_\Phi(t)\|_{H^{s,0}_\Psi}^2+2(\gamma-C_\eta)\|\varphi_\Phi\|_{L_{t,\dot{\mu}}^2(H^{s+\frac{1}{4},0}_\Psi)}^2+(\theta^E-C\zeta(1+t)^\frac{1}{4}-2\eta)\|\partial_y\varphi_\Phi\|_{L_t^2(H^{s,0}_\Psi)}^2\\
            &+\|\left[T_{\partial_y\theta}\int_0^y\mathcal{U} d\tilde{y}\right]_\Phi(t)\|_{H^{s,0}_\Psi}^2+2(\gamma-C_\eta)\|\left[T_{\partial_y\theta}\int_0^y\mathcal{U} d\tilde{y}\right]_\Phi\|_{L_{t,\dot{\mu}}^2(H^{s+\frac{1}{4},0}_\Psi)}^2\\
            &+(\theta^E-C\zeta(1+t)^\frac{1}{4}-2\eta)\|[T_{\partial_y\theta}\mathcal{U} ]_\Phi\|_{L_t^2(H^{s,0}_\Psi)}^2+\theta^E\|\left[T_{\partial_y^2\theta}\int_0^y\mathcal{U} d\tilde{y}\right]_\Phi\|_{L_t^2(H^{s,0}_\Psi)}^2\\
            \leq& \|\varphi_\Phi(t)\|_{H^{s,0}_\Psi}^2+2(\gamma-C_\eta)\|\varphi_\Phi\|_{L_{t,\dot{\mu}}^2(H^{s+\frac{1}{4},0}_\Psi)}^2+(\theta^E-C\zeta(1+t)^\frac{1}{4}-2\eta)\|\partial_y\varphi_\Phi\|_{L_t^2(H^{s,0}_\Psi)}^2\\
            &+C(1+t)^\frac{1}{2}\zeta^2\Big(\|\mathcal{U}_\Phi(t)\|_{H^{s,0}_\Psi}^2+2(\gamma-C_\eta)\|\mathcal{U}_\Phi\|_{L_{t,\dot{\mu}}^2(H^{s+\frac{1}{4},0}_\Psi)}^2+(\theta^E-C\zeta(1+t)^\frac{1}{4}-2\eta)\|\partial_y\mathcal{U}_\Phi\|_{L_t^2(H^{s,0}_\Psi)}^2\Big)\\
            &+C\|\mathcal{U}_\Phi\|_{L_{t,\dot{\mu}}^2(H^{s,0}_\Psi)}^2.
        \end{aligned}
    \end{equation}
    Thus, by combining the estimates given in Proposition \ref{prop:mathcalU}, Proposition \ref{prop:phi} and Lemma \ref{lem:zeroorderoftheta} respectively, with the above inequality, it follows the estimate \eqref{ineq:theta} immediately.
\end{proof}

\subsection{Estimates on \texorpdfstring{$\partial_yu,\partial_y\theta,\partial_y^2u,\partial_y^2\theta$}{partial y u,partial y theta,partial y2 u,partial y2 theta}}
The aim of this subsection is to establish the Gevrey estimates of the first and second-order normal derivatives of $u$ and $\theta$.
First, we have

\begin{proposition}\label{prop:partialyu}
    Under the same assumptions as in Proposition \ref{prop:u}, there holds
    \begin{equation}\label{ineq:partialyu}
        \begin{aligned}
            &\|\partial_yu_\Phi(t)\|_{H^{s,0}_\Psi}^2+2(\gamma-C_\eta)\|\partial_yu_\Phi\|_{L_{t,\dot{\mu}}^2(H^{s+\frac{1}{4},0}_\Psi)}^2+(\theta^E-C\zeta(1+t)^\frac{1}{4}-2\eta)\|\partial_y^2u_\Phi\|_{L_t^2(H^{s,0}_\Psi)}^2\\
            & \leq\|\partial_yu_\Phi(0)\|_{H^{s,0}_{\Psi_0}}^2+C(\|u_\Phi\|_{L_{t,\dot{\mu}}^2({H^{s+1,0}_\Psi})}^2+\|\partial_y\theta_\Phi\|_{L_{t,\dot{\mu}}^2({H^{s,0}_\Psi})}^2)+2\eta\|\partial_y^2\theta_\Phi\|_{L_t^2({H^{s,0}_\Psi})}^2,
        \end{aligned}
    \end{equation}
    and
    \begin{equation}\label{ineq:partialytheta}
        \begin{aligned}
            &\|\partial_y\theta_\Phi(t)\|_{H^{s,0}_\Psi}^2+2(\gamma-C_\eta)\|\partial_y\theta_\Phi\|_{L_{t,\dot{\mu}}^2(H^{s+\frac{1}{4},0}_\Psi)}^2+(\theta^E-C\zeta(1+t)^\frac{1}{4}-2\eta)\|\partial_y^2\theta_\Phi\|_{L_t^2(H^{s,0}_\Psi)}^2\\
            & \leq\|\partial_y\theta_\Phi(0)\|_{H^{s,0}_{\Psi_0}}^2+C(\|u_\Phi\|_{L_{t,\dot{\mu}}^2({H^{s+1,0}_\Psi})}^2+\|\partial_yu_\Phi\|_{L_{t,\dot{\mu}}^2({H^{s,0}_\Psi})}^2+\|\theta_\Phi\|_{L_{t,\dot{\mu}}^2({H^{s+1,0}_\Psi})}^2)+2\eta\|\partial_y^2u_\Phi\|_{L_t^2({H^{s,0}_\Psi})}^2.
        \end{aligned}
    \end{equation}
\end{proposition}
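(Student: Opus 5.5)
We differentiate the first two equations of \eqref{eq:main} once in $y$ and run a weighted energy estimate in $H^{s,0}_\Psi$, with the same structure as in Lemma \ref{lem:leftofmathcalU}. Setting $w=\partial_yu$, the third equation gives $\partial_yv=-\partial_xu+\partial_y^2\theta+(\partial_yu)^2$, so
\begin{equation*}
\partial_tw+u\partial_xw+v\partial_yw-(\theta+\theta^E)\partial_y^2w=\partial_y\theta\,\partial_y^2u+w\,\partial_xu-w\,\partial_y^2\theta-w^3 .
\end{equation*}
For $\partial_y\theta$ we get an analogous equation, with the extra source $\partial_y[(\theta+\theta^E)(\partial_yu)^2]$. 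The key structural point is that the $v\partial_y$-term is now a transport term acting on $w$ itself. Hence no loss of $x$-derivatives arises from $v\partial_yw$ beyond what the Gevrey weight absorbs: $v$ costs one $x$-derivative of $u$ and is paired against $\partial_yw$. This is why the right-hand side contains only $\|u_\Phi\|_{H^{s+1,0}_\Psi}$ weighted by $\dot\mu$.

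\textbf{Boundary conditions.} We need traces of $w$ and $\partial_y\theta$ at $y=0$. For $\partial_y\theta$ we use $\partial_y\theta|_{y=0}=0$, so it satisfies Dirichlet data and integration by parts in $y$ is clean. For $w$ there is no Dirichlet condition, so we use the equation at $y=0$. Since $u=v=0$ there, we have $(\theta+\theta^E)\partial_y^2u|_{y=0}=0$, hence $\partial_yw|_{y=0}=0$, a Neumann condition. In both cases the boundary term from $-\theta^E\langle\partial_y^2 f_\Phi,f_\Phi\rangle$ vanishes. The left-hand side is then treated exactly as in Lemma \ref{lem:leftofmathcalU} using \eqref{eq:weightpro}. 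This produces $\frac12\frac{d}{dt}\|f_\Phi\|^2+\gamma\dot\mu\|f_\Phi\|^2_{H^{s+\frac14,0}_\Psi}+\frac12\theta^E\|\partial_yf_\Phi\|^2_{H^{s,0}_\Psi}$.

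\textbf{Right-hand side, term by term.} We use Bony's decomposition together with Lemmas \ref{lem:paraestimate}--\ref{lem:commutator2}, \ref{lem:Poincare}, \ref{lem:Psi} and \eqref{ineq:nonlinear}.
\begin{itemize}
\item $T_u\partial_xw$: handled as $A_1$, bounded by $C\dot\mu\|w_\Phi\|^2_{H^{s+\frac14,0}_\Psi}$.
\item $T_v\partial_yw$ and $T_\theta\partial_y^2w$: handled as $A_2$, $A_3$. The smallness $\zeta$ from Assumption \ref{assumption} gives the factor $C\zeta(1+t)^{1/4}\|\partial_y^2u_\Phi\|^2$ on the left.
\item Remainders $T_{\partial_y w}v+R(\partial_yw,v)$: we write $v=-\int_0^y\partial_xu+\partial_y\theta+\int_0^y(\partial_yu)^2$. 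These are bounded by $C(1+t)^{1/4}\|\partial_y^2u_\Phi\|_{H^{\frac32+,0}_\Psi}\|u_\Phi\|_{H^{s+1,0}_\Psi}\|w_\Phi\|_{H^{s,0}_\Psi}$ and then absorbed into $C\dot\mu(\|u_\Phi\|^2_{H^{s+1,0}_\Psi}+\|w_\Phi\|^2)$ by \eqref{eq:mu}.
\item $w\partial_xu$: treated the same way.
\item $w\,\partial_y^2\theta$: split by Bony. The high-frequency part of $\partial_y^2\theta$ is estimated by $\eta\|\partial_y^2\theta_\Phi\|^2_{H^{s,0}_\Psi}+C_\eta\dot\mu\|w_\Phi\|^2$, which is the source of the $2\eta\|\partial_y^2\theta_\Phi\|^2$ term.
\item $\partial_y\theta\,\partial_y^2u$: the high-frequency part of $\partial_y^2u$ is absorbed on the left via $\zeta$ or $\eta$. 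The part with $\partial_y\theta$ at high frequency yields $C\dot\mu\|\partial_y\theta_\Phi\|^2_{H^{s,0}_\Psi}$.
\item Cubic terms: handled via \eqref{ineq:nonlinear}.
\end{itemize}
For the $\theta$-equation, the terms $\partial_y\theta\,\partial_y\theta$ and $(\theta+\theta^E)\partial_y(\partial_yu)^2=2(\theta+\theta^E)\partial_yu\,\partial_y^2u$ give $\eta\|\partial_y^2u_\Phi\|^2$ plus $C\dot\mu(\|\partial_yu_\Phi\|^2+\|\theta_\Phi\|^2_{H^{s+1,0}_\Psi})$, as in $C_9$. Integrating in time and choosing $\eta$ small closes both \eqref{ineq:partialyu} and \eqref{ineq:partialytheta}.

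\textbf{Main obstacle.} The delicate step is the loss of one $x$-derivative in $v\,\partial_y(\cdot)$ after commuting $\partial_y$. Concretely, the terms with $\partial_y v\,\partial_yu$ contain $\partial_xu\,\partial_yu$ at top order, and the remainder of $T_v\partial_yw$ involves $\int_0^y\partial_xu$ at top $x$-regularity $s+1$. I would handle these by paying with $\|u_\Phi\|_{H^{s+1,0}_\Psi}$, which is controlled by Proposition \ref{prop:u}, instead of trying to gain from a cancellation. The weighted Poincaré bound (Lemma \ref{lem:Poincare}) converts $\|\int_0^y\partial_x u_\Phi\|_{L^\infty_y}$ into the weighted norm of $u_\Phi$ in $H^{s+1}$ at the cost of $(1+t)^{1/4}$, which $\dot\mu$ absorbs.
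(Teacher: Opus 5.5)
Your proposal follows essentially the same route as the paper. You differentiate the $u$- and $\theta$-equations in $y$, use the Neumann condition $\partial_y^2u|_{y=0}=0$ (from the equation at the wall and $\theta+\theta^E>0$) and the Dirichlet condition $\partial_y\theta|_{y=0}=0$, treat the left-hand side as in Lemma~\ref{lem:leftofmathcalU}, and pay for top-order terms such as $T_{\partial_y^2u}\int_0^y\partial_xu\,d\tilde{y}$ with $C\dot\mu\|u_\Phi\|^2_{H^{s+1,0}_\Psi}$.

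The only slip is in the bookkeeping of the differentiated equations, and it does not affect the conclusion. The two $\partial_yu\,\partial_xu$ contributions (from $\partial_y(u\partial_xu)$ and from $\partial_yv\,\partial_yu$) cancel, so there is no $w\,\partial_xu$ source. In the $\partial_y\theta$-equation the surviving sources are $-\partial_yu\,\partial_x\theta+\partial_xu\,\partial_y\theta+2(\theta+\theta^E)\partial_yu\,\partial_y^2u$. It is $-T_{\partial_yu}\partial_x\theta$, not a $C_9$-type term, that produces the $C\dot\mu\|\theta_\Phi\|^2_{H^{s+1,0}_\Psi}$ contribution, exactly as in \eqref{3.94}.
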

\begin{proof}
(1) We get first, by taking $\partial_y$ to the equation of $u$ given in \eqref{eq:main}, that
\begin{equation}\label{3.84}
    \partial_t\partial_yu-\theta^E\partial_y^3u=-T_u\partial_x\partial_yu+T_{\partial_y^2u}\int_0^y\partial_xu d\tilde{y}+\partial_y\mathfrak{f}
\end{equation}
with 
\begin{equation}
    \begin{aligned}
        \partial_y\mathfrak{f}=&-T_{\partial_x\partial_yu}u-R(u,\partial_x\partial_yu)+T_{\int_0^y\partial_xud\tilde{y}}\partial_y^2u+R\left(\int_0^y\partial_xu d\tilde{y},\partial_y^2u\right)\\
        &-\partial_y^2\theta\partial_yu-(\partial_yu)^3-\int_0^y(\partial_yu)^2 d\tilde{y}\partial_y^2u+\theta\partial_y^3u.
    \end{aligned}
\end{equation}
Applying the operator $e^{\Phi(t,D_x)}$ on the equation \eqref{3.84} and taking $H^{s,0}_\Psi$ inner product with $\partial_yu_\Phi$, it follows 
\begin{equation}\label{eq:partialyuPhi}
    \begin{aligned}
        &\langle\partial_t\partial_yu_\Phi,\partial_yu_\Phi\rangle_{H^{s,0}_\Psi}+\gamma\langle\dot{\mu}(t)\langle D_x\rangle^\frac{1}{2}\partial_yu_\Phi,\partial_yu_\Phi\rangle_{H^{s,0}_\Psi}-\theta^E\langle\partial_y^3u_\Phi,\partial_yu_\Phi\rangle_{H^{s,0}_\Psi}\\
        =&-\langle[T_u\partial_x\partial_yu]_\Phi,\partial_yu_\Phi\rangle_{H^{s,0}_\Psi}+\langle\left[T_{\partial_y^2u}\int_0^y\partial_xud\tilde{y}\right]_\Phi,\partial_yu_\Phi\rangle_{H^{s,0}_\Psi}+\langle\partial_y\mathfrak{f}_\Phi,\partial_yu_\Phi\rangle_{H^{s,0}_\Psi}.
    \end{aligned}
\end{equation}
In a way similar to the study of $A_1$ given in Lemma \ref{lem:rightofmathcalU}, we have
\begin{equation}
    |\langle[T_u\partial_x\partial_yu]_\Phi,\partial_yu_\Phi\rangle_{H^{s,0}_\Psi}|\leq C(1+t)^\frac{1}{4}\|\partial_yu_\Phi\|_{H^{\frac{3}{2}+,0}_\Psi}\|\partial_yu_\Phi\|_{H^{s+\frac{1}{4},0}_\Psi}^2\leq C\dot{\mu}(t)\|\partial_yu_\Phi\|_{H^{s+\frac{1}{4},0}_\Psi}^2.
\end{equation}
By using Lemmas \ref{lem:paraestimate} and \ref{lem:Poincare}, one gets
\begin{equation}
    \begin{aligned}
        |\langle\left[T_{\partial_y^2u}\int_0^y\partial_xud\tilde{y}\right]_\Phi,\partial_yu_\Phi\rangle_{H^{s,0}_\Psi}|\leq& C(1+t)^\frac{1}{4}\|\partial_y^2u_\Phi\|_{H^{\frac{1}{2}+,0}_\Psi}\|u_\Phi\|_{H^{s+1,0}_\Psi}\|\partial_yu_\Phi\|_{H^{s,0}_\Psi}\\
        \leq& C\dot{\mu}(t)(\|u_\Phi\|_{H^{s+1,0}_\Psi}^2+\|\partial_yu_\Phi\|_{H^{s,0}_\Psi}^2).
    \end{aligned}
\end{equation}
The remaining term $\langle\partial_y\mathfrak{f}_\Phi,\partial_yu_\Phi\rangle_{H^{s,0}_\Psi}$ on the right-hand side of \eqref{eq:partialyuPhi} can be estimated in a way  similar that given in the proof of Proposition 3.4 in \cite{wang2025compressible}, and we obtain
\begin{equation}
    \begin{aligned}
        |\langle\partial_y\mathfrak{f}_\Phi,\partial_yu_\Phi\rangle_{H^{s,0}_\Psi}|\leq& C_\eta\dot{\mu}(t)\|\partial_yu_\Phi\|_{H^{s,0}_\Psi}^2+C\dot{\mu}(t)(\|u_\Phi\|_{H^{s,0}_\Psi}^2+\|\partial_y\theta_\Phi\|_{H^{s,0}_\Psi}^2)\\
        &+(\eta+C\zeta(1+t)^\frac{1}{4})\|\partial_y^2u_\Phi\|_{H^{s,0}_\Psi}^2+\eta\|\partial_y^2\theta_\Phi\|_{H^{s,0}_\Psi}^2.
    \end{aligned}
\end{equation}
From the equation of $u$ in \eqref{eq:main}, the boundary condition $u|_{y=0}=v|_{y=0}=0$ and \eqref{ineq:positivity}, we get
    \begin{equation}
        \partial_y^2u|_{y=0}=0.
    \end{equation}
Therefore, we can obtain the estimates of terms on the left-hand side of \eqref{eq:partialyuPhi} by using a similar argument of Lemma \ref{lem:leftofmathcalU}. By integrating the resulting inequality over $[0,t]$, we can easily deduce the prior estimate \eqref{ineq:partialyu}. 

(2) To derive the estimate \eqref{ineq:partialytheta}, parallel to the above calculation, by taking $\partial_y$ to the equation of $\theta$ in \eqref{eq:main}, it gives
\begin{equation}\label{3.91}
    \partial_t\partial_y\theta-\theta^E\partial_y^3\theta=-T_{\partial_yu}\partial_x\theta-T_u\partial_x\partial_y\theta+T_{\partial_y^2\theta}\int_0^y\partial_xu d\tilde{y}+T_{\partial_y\theta}\partial_xu+\partial_y\mathfrak{g}
\end{equation}
with
\begin{equation}
    \begin{aligned}
        \partial_y\mathfrak{g}=&-T_{\partial_x\theta}\partial_yu-R(\partial_yu,\partial_x\theta)-T_{\partial_x\partial_y\theta}u+R(u,\partial_x\partial_y\theta)+T_{\int_0^y\partial_xud\tilde{y}}\partial_y^2\theta+R\left(\partial_y^2\theta,\int_0^y\partial_xu d\tilde{y}\right)\\
        &+T_{\partial_xu}\partial_y\theta+R(\partial_xu,\partial_y\theta)-\partial_y\theta\partial_y^2\theta-\left(\int_0^y(\partial_yu)^2 d\tilde{y}\right)\partial_y^2\theta+2(\theta+\theta^E)\partial_yu\partial_y^2u+\theta\partial_y^3\theta.
    \end{aligned}
\end{equation}
Applying the operator $e^{\Phi(t,D_x)}$ to the equation \eqref{3.91} and taking $H^{s,0}_\Psi$ inner product with $\partial_y\theta_\Phi$, it follows
\begin{equation}\label{eq:partialytheta}
    \begin{aligned}
        &\langle\partial_t\partial_y\theta_\Phi,\partial_y\theta_\Phi\rangle_{H^{s,0}_\Psi}+\gamma\langle\dot{\mu}\langle D_x\rangle^\frac{1}{2}\partial_y\theta_\Phi,\partial_y\theta_\Phi\rangle_{H^{s,0}_\Psi}-\theta^E\langle\partial_y^3\theta_\Phi,\partial_y\theta_\Phi\rangle_{H^{s,0}_\Psi}\\
        =&-\langle[T_{\partial_yu}\partial_x\theta]_\Phi,\partial_y\theta_\Phi\rangle_{H^{s,0}_\Psi}-\langle[T_u\partial_x\partial_y\theta]_\Phi,\partial_y\theta_\Phi\rangle_{H^{s,0}_\Psi}+\langle\left[T_{\partial_y^2\theta}\int_0^y\partial_xud\tilde{y}\right]_\Phi,\partial_y\theta_\Phi\rangle_{H^{s,0}_\Psi}\\
        &+\langle[T_{\partial_y\theta}\partial_xu]_\Phi,\partial_y\theta_\Phi\rangle_{H^{s,0}_\Psi}+\langle\partial_y\mathfrak{g}_\Phi,\partial_y\theta_\Phi\rangle_{H^{s,0}_\Psi}.
    \end{aligned}
\end{equation}
In the same way as in the above estimate for $\partial_yu$, we can have:
\begin{equation}\label{3.94}
    \begin{aligned}
        &|\langle[T_{\partial_yu}\partial_x\theta]_\Phi,\partial_y\theta_\Phi\rangle_{H^{s,0}_\Psi}|+|\langle\left[T_{\partial_y^2\theta}\int_0^y\partial_xud\tilde{y}\right]_\Phi,\partial_y\theta_\Phi\rangle_{H^{s,0}_\Psi}|+|\langle[T_{\partial_y\theta}\partial_xu]_\Phi,\partial_y\theta_\Phi\rangle_{H^{s,0}_\Psi}|\\
        \leq& C(1+t)^\frac{1}{4}\|\partial_y^2u_\Phi\|_{H^{\frac{1}{2}+,0}_\Psi}\|\theta_\Phi\|_{H^{s+1,0}_\Psi}\|\partial_y\theta_\Phi\|_{H^{s,0}_\Psi}+C(1+t)^\frac{1}{4}\|\partial_y^2\theta_\Phi\|_{H^{\frac{1}{2}+,0}_\Psi}\|u_\Phi\|_{H^{s+1,0}_\Psi}\|\partial_y\theta_\Phi\|_{H^{s,0}_\Psi}\\
        \leq& C\dot{\mu}(t)(\|\theta_\Phi\|_{H^{s+1,0}_\Psi}^2+\|\partial_y\theta_\Phi\|_{H^{s,0}_\Psi}^2+\|u_\Phi\|_{H^{s+1,0}_\Psi}^2),
        \end{aligned}
\end{equation}

$$|\langle[T_u\partial_x\partial_y\theta]_\Phi,\partial_y\theta_\Phi\rangle_{H^{s,0}_\Psi}|\leq C(1+t)^\frac{1}{4}\|\partial_yu_\Phi\|_{H^{\frac{3}{2}+,0}_\Psi}\|\partial_y\theta_\Phi\|_{H^{s+\frac{1}{4},0}_\Psi}^2\leq C\dot{\mu}(t)\|\partial_y\theta_\Phi\|_{H^{s+\frac{1}{4},0}_\Psi}^2
$$
and
\begin{equation}\label{3.95}
    \begin{aligned}
        |\langle\partial_y\mathfrak{g}_\Phi,\partial_y\theta_\Phi\rangle_{H^{s,0}_\Psi}|\leq&C_\eta\dot{\mu}(t)\|\partial_y\theta_\Phi\|_{H^{s,0}_\Psi}^2+C\dot{\mu}(t)(\|u_\Phi\|_{H^{s,0}_\Psi}^2+\|\partial_yu_\Phi\|_{H^{s,0}_\Psi}^2+\|\theta_\Phi\|_{H^{s,0}_\Psi}^2)\\
        &+(\eta+C\zeta(1+t)^\frac{1}{4})\|\partial_y^2\theta_\Phi\|_{H^{s,0}_\Psi}^2+\eta\|\partial_y^2u_\Phi\|_{H^{s,0}_\Psi}^2.
    \end{aligned}
\end{equation}
By noting $\partial_y\theta|_{y=0}=0$, 
we can derive the estimate \eqref{ineq:partialytheta}
from the above identity \eqref{eq:partialytheta} by using the inequalities \eqref{3.94}-\eqref{3.95}. Thus we end the proof of Proposition \ref{prop:partialyu}.
\end{proof}

\begin{proposition}\label{prop:partialy2}
    Under the same assumptions as in Proposition \ref{prop:u}, there holds
    \begin{equation}\label{ineq:partialy2}
        \begin{aligned}
            &\|\partial_y^2u_\Phi(t)\|_{H^{s-1,0}_\Psi}^2+\|\partial_y^2\theta_\Phi(t)\|_{H^{s-1,0}_\Psi}^2+2(\gamma-C_\eta)(\|\partial_y^2u_\Phi\|_{L_{t,\dot{\mu}}^2(H^{s-\frac{3}{4},0}_\Psi)}^2+\|\partial_y^2\theta_\Phi\|_{L_{t,\dot{\mu}}^2(H^{s-\frac{3}{4},0}_\Psi)}^2)\\
            &+(\theta^E-C\zeta(1+t)^\frac{1}{4}-2\eta)(\|\partial_y^3u_\Phi\|_{L_t^2(H^{s-1,0}_\Psi)}^2+\|\partial_y^3\theta_\Phi\|_{L_t^2(H^{s-1,0}_\Psi)}^2)\\
            \leq& \|\partial_y^2u_\Phi(0)\|_{H^{s-1,0}_{\Psi_0}}^2+\|\partial_y^2\theta_\Phi(0)\|_{H^{s-1,0}_{\Psi_0}}^2+C(\|u_\Phi\|_{L_{t,\dot{\mu}}^2({H_\Psi^{s,1}})}^2+\|\theta_\Phi\|_{L_{t,\dot{\mu}}^2({H^{s-1,0}_\Psi})}^2+\|\partial_y\theta_\Phi\|_{L_{t,\dot{\mu}}^2({H^{s,0}_\Psi})}^2).
        \end{aligned}
    \end{equation}
\end{proposition}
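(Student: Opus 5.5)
We will argue exactly as in Proposition \ref{prop:partialyu}, one normal derivative higher and one tangential derivative lower. Apply $\partial_y^2$ to the equations of $u$ and $\theta$ in \eqref{eq:main}, or equivalently $\partial_y$ to \eqref{3.84} and \eqref{3.91}. After Bony's decomposition this gives
\begin{equation*}
\partial_t\partial_y^2u-\theta^E\partial_y^4u=-T_u\partial_x\partial_y^2u+T_{\partial_y^3u}\int_0^y\partial_xu\,d\tilde y+\partial_y^2\mathfrak{f},
\end{equation*}
together with an analogous equation for $\partial_y^2\theta$. The latter contains the extra transport-type terms $-T_{\partial_y^2u}\partial_x\theta-2T_{\partial_yu}\partial_x\partial_y\theta+2T_{\partial_y^2\theta}\partial_xu+\cdots$ coming from $\partial_yv=-\partial_xu+\partial_y^2\theta+(\partial_yu)^2$. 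We then apply $e^{\Phi(t,D_x)}$ and take the $H^{s-1,0}_\Psi$ inner product with $\partial_y^2u_\Phi$ and $\partial_y^2\theta_\Phi$ respectively.

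The loss of one tangential derivative relative to Proposition \ref{prop:partialyu} is what makes the estimate close without auxiliary functions. The dangerous terms involve $\partial_x u$ or $\partial_x\theta$ with no $y$-derivative, for instance $T_{\partial_y^3u}\int_0^y\partial_xu$, $T_{\partial_y^2u}\partial_x\theta$ and $T_{\partial_y^2\theta}\partial_xu$. In $H^{s-1}$ these cost only $\|u_\Phi\|_{H^{s,0}_\Psi}$ or $\|\theta_\Phi\|_{H^{s,0}_\Psi}$. The same holds for $T_{\partial_yu}\partial_x\partial_y\theta$, which costs $\|\partial_y\theta_\Phi\|_{H^{s,0}_\Psi}$. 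By Lemmas \ref{lem:paraestimate} and \ref{lem:Poincare}, each such term is bounded by $C\dot\mu(t)$ times the lower-order norms on the right of \eqref{ineq:partialy2}, namely $\|u_\Phi\|_{H^{s,1}_\Psi}$, $\|\theta_\Phi\|_{H^{s-1,0}_\Psi}$ and $\|\partial_y\theta_\Phi\|_{H^{s,0}_\Psi}$, plus the $\dot\mu$-weighted norms of $\partial_y^2u_\Phi,\partial_y^2\theta_\Phi$. The transport term $T_u\partial_x\partial_y^2u$ is handled like $A_1$: we use Lemma \ref{lem:commutator2} and the commutator estimates of Lemma \ref{lem:commutator} to get $C\dot\mu\|\partial_y^2u_\Phi\|^2_{H^{s-\frac34,0}_\Psi}$, which is absorbed by $\gamma$. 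The diffusion term $T_\theta\partial_y^4u$ is integrated by parts once. This produces $C\zeta(1+t)^{1/4}\|\partial_y^3u_\Phi\|^2_{H^{s-1,0}_\Psi}$ as in $A_{3,2}$, plus terms with $T_{\partial_y\theta}$ and $\partial_y\Psi$ treated as in $A_{3,1},A_{3,3}$ via Lemma \ref{lem:Psi}. The cubic and quartic terms, such as $(\partial_yu)^3$, $\partial_y^2\theta\,\partial_yu$ and $\int_0^y(\partial_yu)^2\,\partial_y^3u$, are estimated with \eqref{ineq:nonlinear} and the $\partial_y^3$-terms built into $\dot\mu$ in \eqref{eq:mu}, giving $C_\eta\dot\mu\|\cdot\|^2+\eta\|\partial_y^3\cdot\|^2$.

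The step we expect to be the main obstacle is the boundary term at $y=0$ from integrating $-\theta^E\langle\partial_y^4u_\Phi,\partial_y^2u_\Phi\rangle$ by parts. Unlike the lower levels, neither $\partial_y^2u$ nor $\partial_y^3u$ vanishes on $y=0$ in general. For $u$ we do have $\partial_y^2u|_{y=0}=0$, already noted in the proof of Proposition \ref{prop:partialyu}, so the boundary term $\partial_y^3u\,\partial_y^2u|_{y=0}$ vanishes. For $\theta$ we only know $\partial_y\theta|_{y=0}=0$. Evaluating the $\theta$-equation at $y=0$ gives $(\theta+\theta^E)\partial_y^2\theta|_{y=0}=-(\theta+\theta^E)(\partial_yu)^2|_{y=0}$ (using $\partial_t\theta+u\partial_x\theta+v\partial_y\theta=0$ there), so $\partial_y^2\theta|_{y=0}=-(\partial_yu)^2|_{y=0}$.

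We would therefore first try to avoid this boundary term. One option is to pair instead with the corrected quantity $\partial_y^2\theta+(\partial_yu)^2$, which vanishes on the boundary. The other option is to bound the trace $|\langle\partial_y^3\theta_\Phi,\partial_y^2\theta_\Phi\rangle_{y=0}|$ by the trace inequality $\|f(0)\|^2\lesssim\|f\|\|\partial_yf\|$ in $H^{s-1}_x$, expressing $\partial_y^3\theta|_{y=0}$ through the equation. The commutator arising from $(\partial_yu)^2$ is of lower order, with coefficients controlled by $\dot\mu$. Either route yields $\eta\|\partial_y^3\theta_\Phi\|^2+C_\eta\dot\mu(\cdots)$. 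Summing the $u$ and $\theta$ inequalities, using Lemma \ref{lem:leftofmathcalU} for the left side, and integrating in time then gives \eqref{ineq:partialy2}.
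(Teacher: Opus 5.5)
Your interior estimates follow the scheme the paper intends. The paper's proof of Proposition~\ref{prop:partialy2} consists only of the remark that the argument of Proposition~\ref{prop:partialyu} carries over, and it never discusses the wall term you isolate. That term, $\theta^E\,\mathrm{Re}\int_{\R}\langle D_x\rangle^{s-1}\partial_y^3\theta_\Phi\,\overline{\langle D_x\rangle^{s-1}\partial_y^2\theta_\Phi}\,dx\big|_{y=0}$ plus its $T_\theta$ counterpart, is genuinely there. However, your handling of it rests on a false identity.

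The claim $\partial_y^2\theta|_{y=0}=-(\partial_yu)^2|_{y=0}$ silently assumes $\partial_t\theta|_{y=0}=0$. At the wall only $u\partial_x\theta+v\partial_y\theta$ vanishes. Since $\theta$ carries a Neumann rather than a Dirichlet condition, its wall value evolves. This is exactly where the analogy with $\partial_y^2u|_{y=0}=0$ breaks: there $u|_{y=0}=0$ forces $\partial_tu|_{y=0}=0$. Concretely, because $\partial_xu|_{y=0}=0$, the third equation of \eqref{eq:main} gives $\partial_y^2\theta+(\partial_yu)^2=\partial_yv$ on $y=0$. The $\theta$-equation then gives $\partial_yv|_{y=0}=\partial_t\theta/(\theta+\theta^E)|_{y=0}$, which is generally nonzero. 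For instance, for $u\equiv0$ the system reduces, for each $x$, to the Neumann problem $\partial_t\theta+(\partial_y\theta)^2=(\theta+\theta^E)\partial_y^2\theta$, whose wall value moves as soon as $\partial_y^2\theta_0|_{y=0}\neq0$. So $\partial_y^2\theta+(\partial_yu)^2$ does not vanish on the boundary, and your first route fails. Your remark about ``the commutator arising from $(\partial_yu)^2$'' goes with it.

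Your second route is the right one, but its key input is the wall value of $\partial_y^3\theta$, which you do not compute. Differentiate the $\theta$-equation once in $y$ and restrict to $y=0$. There $u=v=\partial_y\theta=0$ (so also $\partial_t\partial_y\theta=\partial_x\partial_y\theta=0$) and $\partial_y^2u=0$, so everything drops except
\[
(\theta+\theta^E)\,\partial_y^3\theta\big|_{y=0}=\partial_yu\,\partial_x\theta\big|_{y=0}.
\]
Now combine three ingredients:
\begin{itemize}
\item the smallness of $\theta$, to absorb the $T_\theta\partial_y^3\theta$ part, or equivalently the factor $(\theta+\theta^E)^{-1}$;
\item Lemma~\ref{lem:Poincare}, for the traces of $\partial_yu$ and $\theta$;
\item the trace bound $\|\partial_y^2\theta_\Phi(t,\cdot,0)\|_{H^{s-1}_x}^2\le2\|\partial_y^2\theta_\Phi\|_{H^{s-1,0}_\Psi}\|\partial_y^3\theta_\Phi\|_{H^{s-1,0}_\Psi}$, valid since $\Psi\ge0$.
\end{itemize}
With these, the wall term is bounded by
\[
C(1+t)^{\frac12}\Big(\|\partial_y^2u_\Phi\|_{H^{\frac12+,0}_\Psi}\|\partial_y\theta_\Phi\|_{H^{s,0}_\Psi}+\|\partial_y\theta_\Phi\|_{H^{\frac32+,0}_\Psi}\|\partial_y^2u_\Phi\|_{H^{s-1,0}_\Psi}\Big)\|\partial_y^2\theta_\Phi\|_{H^{s-1,0}_\Psi}^{\frac12}\|\partial_y^3\theta_\Phi\|_{H^{s-1,0}_\Psi}^{\frac12}.
\]
On a bounded time interval, Young's inequality turns this into $\eta\|\partial_y^3\theta_\Phi\|^2_{H^{s-1,0}_\Psi}$ plus $C_\eta\dot\mu$ times $\|\partial_y\theta_\Phi\|^2_{H^{s,0}_\Psi}+\|\partial_y^2u_\Phi\|^2_{H^{s-1,0}_\Psi}+\|\partial_y^2\theta_\Phi\|^2_{H^{s-1,0}_\Psi}$. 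This is consistent with the $\|\partial_y\theta_\Phi\|_{L^2_{t,\dot\mu}(H^{s,0}_\Psi)}$ term on the right of \eqref{ineq:partialy2}. Note that the wall value costs a full tangential derivative of $\theta$. That is affordable only because this estimate sits one tangential order below Proposition~\ref{prop:partialyu}.

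Relatedly, wherever you say a term costs $\|\theta_\Phi\|_{H^{s,0}_\Psi}$, that norm is not on the right of \eqref{ineq:partialy2}. Instead, put $\partial_x\theta$ in $L^\infty_y(H^{s-1}_x)$ and pass to $\|\partial_y\theta_\Phi\|_{H^{s,0}_\Psi}$ via Lemma~\ref{lem:Poincare}.
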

The proof of the above estimate of $\partial_y^2u$ and $\partial_y^2\theta$ closely follows the above same  arguments used for $\partial_yu$ and $\partial_y\theta$, for brevity, we omit the details.

\section{Proof of the main result}
The proposal of this section is to finish the proof of the main result stated in Theorem \ref{thm:main}. First, we shall prove a priori estimate, from which one can close the a prior Assumption \ref{assumption} given at the beginning of \S3, by a bootstrap argument.

\begin{theorem}[A priori estimate]\label{thm:priori}
    Suppose that $(u,\theta)$ is a solution of \eqref{eq:main} and $\mathcal{U}$, $\lambda$ are auxiliary functions defined by \eqref{eq:intmathcalU}, \eqref{def:lambda} respectively, with norms appearing in \eqref{ineq:thmpriori}, \eqref{ineq:thmpriori2}, \eqref{ineq:thmpriori3} being finite. Define an energy functional as 
    $$\mathcal{E}_s[f](t)\triangleq\|f_\Phi\|_{L_t^\infty(H^{s,0}_\Psi)}^2+\gamma\|f_\Phi\|_{L^2_{t,\dot{\mu}}(H^{s+\frac{1}{4},0}_\Psi)}^2+\frac{\theta^E}{16}\|\partial_y f_\Phi\|_{L^2_t(H^{s,0}_\Psi)}^2.$$
    Then there exist positive time $T>0$, and  large constants $k>0$ and $\gamma>0$, so that for a fixed $s>\frac{5}{2}$ the following estimates 
    \begin{align}
        &\mathcal{E}_s[\mathcal{U}](t)+\mathcal{E}_{s+\frac{1}{2}}[\lambda](t)+\mathcal{E}_{s+1}[u](t)+k\mathcal{E}_{s+1}[\theta](t)\leq 2\|e^{\delta\langle D_x\rangle^{\frac{1}{2}}}u_0\|_{H^{s+\frac{3}{2},0}_{\Psi_0}}^2+k\|e^{\delta\langle D_x\rangle^{\frac{1}{2}}}\theta_0\|_{H^{s+1,0}_{\Psi_0}}^2,\label{ineq:thmpriori}\\
        &\mathcal{E}_s[\partial_yu](t)+\mathcal{E}_s[\partial_y\theta](t)\leq \frac{3}{2}\bigg(\|e^{\delta\langle D_x\rangle^{\frac{1}{2}}}\partial_yu_0\|_{H^{s,0}_{\Psi_0}}^2+\|e^{\delta\langle D_x\rangle^{\frac{1}{2}}}\partial_y\theta_0\|_{H^{s,0}_{\Psi_0}}^2\bigg),\label{ineq:thmpriori2}\\
        &\mathcal{E}_{s-1}[\partial_y^2u](t)+\mathcal{E}_{s-1}[\partial_y^2\theta](t)\leq \frac{3}{2}\bigg(\|e^{\delta\langle D_x\rangle^{\frac{1}{2}}}\partial_y^2u_0\|_{H^{s-1,0}_{\Psi_0}}^2+\|e^{\delta\langle D_x\rangle^{\frac{1}{2}}}\partial_y^2\theta_0\|_{H^{s-1,0}_{\Psi_0}}^2\bigg).\label{ineq:thmpriori3}
    \end{align}
holds for any $0<t<T$.
\end{theorem}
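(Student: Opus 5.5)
The plan is to sum the estimates of Propositions \ref{prop:mathcalU}, \ref{prop:lambda}, \ref{prop:u}, \ref{prop:theta} (the last multiplied by $k$), and then to choose the parameters in the order $\eta$, $k$, $\gamma$, $T$ so that every term on the right-hand side except the initial data is absorbed into the left-hand side. The restriction $t<T\le T^*$ guarantees $\Phi\ge0$, so $\|f_\Phi(0)\|_{H^{\sigma,0}_{\Psi_0}}=\|e^{\delta\langle D_x\rangle^{1/2}}f_0\|_{H^{\sigma,0}_{\Psi_0}}$. The bound $\|\partial_xu_\Phi(0)\|_{H^{s+\frac12,0}}\le\|u_\Phi(0)\|_{H^{s+\frac32,0}}$ makes the initial-data side of the sum at most $2\|e^{\delta\langle D_x\rangle^{1/2}}u_0\|^2_{H^{s+\frac32,0}_{\Psi_0}}+k\|e^{\delta\langle D_x\rangle^{1/2}}\theta_0\|^2_{H^{s+1,0}_{\Psi_0}}$.

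\textbf{Absorption of the right-hand side.} I would organize the remaining terms into three groups.
\begin{enumerate}
\item \emph{Terms weighted by $L^2_{t,\dot\mu}$.} These are $C\|\lambda_\Phi\|^2_{L^2_{t,\dot\mu}(H^{s+\frac34})}$, $C\|u_\Phi\|^2_{L^2_{t,\dot\mu}(H^{s+\frac54})}$, $Ck\|\theta_\Phi\|^2_{L^2_{t,\dot\mu}(H^{s+1})}$ and $C\|\mathcal U_\Phi\|^2_{L^2_{t,\dot\mu}(H^{s+\frac14})}$. They are absorbed by the terms $2(\gamma-C_\eta)\|\cdot\|^2_{L^2_{t,\dot\mu}}$ on the left, once $\gamma\ge C(1+k)+C_\eta$. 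The factor $(1+t)^{1/2}M^2$ multiplying the $\lambda$-term in Proposition \ref{prop:u} is harmless here, since it only enlarges $\gamma$ by $CM^2(1+T)^{1/2}$.
\item \emph{Dissipative terms.} These are $\|\partial_y u_\Phi\|^2_{L^2_t(H^{s+1})}$ and $\|\partial_y\theta_\Phi\|^2_{L^2_t(H^{s+1})}$. The delicate one is $\frac{5}{\theta^E}\|\partial_y\theta_\Phi\|^2_{L^2_t(H^{s+1})}$, which appears in Proposition \ref{prop:mathcalU}; it comes from $A_7$ and carries no small factor. This is why $\theta$ gets the large weight $k$: choosing $k\theta^E/2\ge 5/\theta^E\,(1+CM^2(1+T)^{1/2})+C$ lets the $\theta$-dissipation in $k\times$Proposition \ref{prop:theta} dominate it. The terms $\eta\|\partial_yu_\Phi\|^2$ are absorbed by taking $\eta$ small relative to $\theta^E$, $k^{-1}$ and $M^{-2}$. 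The terms $C\zeta(1+t)^{1/4}$ are absorbed by the smallness $\zeta=2\epsilon$ together with $T\le1$.
\item \emph{Leftover $\mathcal U$-terms.} The terms $\eta\|\partial_y\mathcal U_\Phi\|^2$ and the contribution $\theta^E\|\partial_y\mathcal U_\Phi\|^2$ entering through \eqref{ineq:TpartialyuintU1} need care, because the latter is multiplied by $C(1+t)^{1/2}M^2$. Rather than absorbing it, I would substitute the bound of Proposition \ref{prop:mathcalU} itself into Proposition \ref{prop:u}, as the paper already does; the resulting right-hand side is then again of the types in items 1 and 2.
\end{enumerate}
After these absorptions the left side retains $\frac{\theta^E}{16}$ of each dissipation, as required by $\mathcal E_s$.

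\textbf{The estimates \eqref{ineq:thmpriori2} and \eqref{ineq:thmpriori3}.} These follow from Propositions \ref{prop:partialyu} and \ref{prop:partialy2}. Their right-hand sides contain, besides $\eta$-dissipation terms absorbed by summing the $u$ and $\theta$ versions, only quantities such as $C\|u_\Phi\|^2_{L^2_{t,\dot\mu}(H^{s+1})}$. By \eqref{ineq:thmpriori} these are bounded by $C\gamma^{-1}\times$data, and the $\partial_y\theta_\Phi$ terms weighted by $\dot\mu$ are absorbed for $\gamma$ large. Hence a large $\gamma$ turns the constant $1$ in front of the data into $\frac32$, provided the data are normalized, or $\gamma$ is taken large depending on the ratio of the norms.

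\textbf{Main obstacle.} The hardest point is consistency of the parameter choices with the definition of $T^*$. Since $\gamma$ depends on $M$ and $T$, I would first fix $T=1$, then choose $\eta$, $k$, $\gamma$ in that order, and finally shrink $T$ so that $\mu(t)<\delta/\gamma$ on $[0,T]$. Shrinking $T$ is legitimate because $\dot\mu$ is controlled in $L^1_t$ by $t+C\sqrt t\,(\text{energy})+t\,(\text{energy})^2$, using the energy bounds just proved through a continuity argument. This gives $T\le T^*$ and closes the estimate.
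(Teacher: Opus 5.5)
You follow the paper for \eqref{ineq:thmpriori}--\eqref{ineq:thmpriori3} and for keeping $\mu<\delta/\gamma$. There is one genuine gap: you never close Assumption \ref{assumption}. It is not a hypothesis of the theorem, yet every proposition you sum is proved under \eqref{ineq:assumption}. Your absorption also relies essentially on $\|\partial_y\theta_\Phi\|_{L^\infty_t(H^{s,0}_\Psi)}\le\zeta=2\epsilon$.

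Beyond the $C\zeta(1+t)^{1/4}$ terms you mention, $k\times$Proposition \ref{prop:theta} puts $Ck(1+t)^{1/2}\zeta^2\frac{5}{\theta^E}\|\partial_y\theta_\Phi\|^2_{L^2_t(H^{s+1,0}_\Psi)}$ on the right. This term grows linearly in $k$, so your rule $k\theta^E/2\ge\frac{5}{\theta^E}(1+CM^2(1+T)^{1/2})+C$ cannot beat it; only smallness of $\zeta$ can. That is why the paper's choice of $k$ has denominator $\frac{5\theta^E}{16}-C(1+t)^{1/2}\zeta^2\frac{5}{\theta^E}$. Unless you show the bootstrap bounds are strictly improved, the argument is circular.

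\medskip

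The $M$-bound improves directly: \eqref{ineq:thmpriori2} gives $\|\partial_yu_\Phi\|_{L^\infty_t(H^{s,0}_\Psi)}\le\frac{\sqrt6}{4}M<M$.

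The $\zeta$-bound does not follow from \eqref{ineq:thmpriori2}. That estimate only gives $\|\partial_y\theta_\Phi\|^2\le\frac32(\|e^{\delta\langle D_x\rangle^{1/2}}\partial_yu_0\|^2_{H^{s,0}_{\Psi_0}}+\|e^{\delta\langle D_x\rangle^{1/2}}\partial_y\theta_0\|^2_{H^{s,0}_{\Psi_0}})$, and $\partial_yu_0$ is not small. The missing step is to return to \eqref{ineq:partialytheta} alone and bound its three contributions:
\begin{enumerate}
\item The coupling terms $C(\|u_\Phi\|^2_{L^2_{t,\dot\mu}(H^{s+1,0}_\Psi)}+\|\partial_yu_\Phi\|^2_{L^2_{t,\dot\mu}(H^{s,0}_\Psi)}+\|\theta_\Phi\|^2_{L^2_{t,\dot\mu}(H^{s+1,0}_\Psi)})$ are at most $C\gamma^{-1}$ times the data norms, by \eqref{ineq:thmpriori}--\eqref{ineq:thmpriori2}. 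They are therefore $\le\zeta^2/4$ once $\gamma$ is large.
\item The term $2\eta\|\partial_y^2u_\Phi\|^2_{L^2_t(H^{s,0}_\Psi)}$ is at most $2\eta\cdot\frac{16}{\theta^E}\cdot\frac32$ times the data norms, hence $\le\zeta^2/4$ once $\eta$ is small.
\item The initial term satisfies $\|\partial_y\theta_\Phi(0)\|^2_{H^{s,0}_{\Psi_0}}\le\epsilon^2=\zeta^2/4$ by \eqref{assumption2}.
\end{enumerate}
Together these give $\|\partial_y\theta_\Phi\|_{L^\infty_t(H^{s,0}_\Psi)}\le\frac{\sqrt3}{2}\zeta<\zeta$, and a continuity argument closes \eqref{ineq:assumption}.

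This step makes $\gamma$ and $\eta$ depend on $\epsilon$ and on the data, so it must be built into your parameter ordering. Relatedly, you need $\eta\lesssim k^{-1}$, so the order $\eta,k,\gamma$ you state does not work as written. The paper instead imposes only $\eta\le\theta^E/16$ first, chooses $k$ from that bound, then shrinks $\eta$, and finally takes $\gamma>C_\eta$.
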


\begin{proof}
(1)    Under Assumption \ref{assumption}, by summing up \eqref{ineq:mathcalU}, \eqref{ineq:lambda}, \eqref{ineq:u} and $k\times$\eqref{ineq:theta}, where $k$ is a positive constant to be determined later, we obtain
    \begin{equation}\label{ineq:priori}
    \begin{aligned}
        &\|\mathcal{U}_\Phi(t)\|_{H^{s,0}_\Psi}^2
        + \|\lambda_\Phi(t)\|_{H^{s+\frac{1}{2},0}_\Psi}^2
        + \|u_\Phi(t)\|_{H^{s+1,0}_\Psi}^2
        + k\|\theta_\Phi(t)\|_{H^{s+1,0}_\Psi}^2 \\[1ex]
        &\quad
        + 2(\gamma - C_\eta) \Big(
            \|\mathcal{U}_\Phi\|_{L^2_{t,\dot{\mu}}(H^{s+\frac{1}{4},0}_\Psi)}^2
            + \|\lambda_\Phi\|_{L^2_{t,\dot{\mu}}(H^{s+\frac{3}{4},0}_\Psi)}^2
            + \|u_\Phi\|_{L^2_{t,\dot{\mu}}(H^{s+\frac{5}{4},0}_\Psi)}^2
            + k\|\theta_\Phi\|_{L^2_{t,\dot{\mu}}(H^{s+\frac{5}{4},0}_\Psi)}^2
        \Big) \\[1ex]
        &\quad
        + \big(\theta^E - C\zeta (1+t)^{\frac{1}{4}} - 2\eta\big)
            \Big(
                \|\partial_y \mathcal{U}_\Phi\|_{L^2_t(H^{s,0}_\Psi)}^2
                + \|\partial_y \lambda_\Phi\|_{L^2_t(H^{s+\frac{1}{2},0}_\Psi)}^2
                + \|\partial_y u_\Phi\|_{L^2_t(H^{s+1,0}_\Psi)}^2
            \Big) \\[1ex]
        &\quad
        + k\big(\theta^E - C\zeta (1+t)^{\frac{1}{4}} - 2\eta\big)
            \|\partial_y \theta_\Phi\|_{L^2_t(H^{s+1,0}_\Psi)}^2 \\[2ex]
        &\leq
        2\|u_\Phi(0)\|_{H^{s+\frac{3}{2},0}_\Psi}^2
        + k\|\theta_\Phi(0)\|_{H^{s+1,0}_\Psi}^2 \\[1ex]
        &\quad
        + C\Big(
            (1+k)\|\mathcal{U}_\Phi\|_{L^2_{t,\dot{\mu}}(H^{s+\frac{1}{4},0}_\Psi)}^2
            + \big[1 + (1+t)^{\frac{1}{2}}M^2 + (1+t)^{\frac{1}{2}}\zeta^2 k\big]
                \|\lambda_\Phi\|_{L^2_{t,\dot{\mu}}(H^{s+\frac{3}{4},0}_\Psi)}^2 \\[-1ex]
        &\hspace{4em}
            + (1+k)\|u_\Phi\|_{L^2_{t,\dot{\mu}}(H^{s+\frac{5}{4},0}_\Psi)}^2
            + (1+k)\|\theta_\Phi\|_{L^2_{t,\dot{\mu}}(H^{s+\frac{5}{4},0}_\Psi)}^2
        \Big) \\[1ex]
        &\quad
        + \eta(4 + 2 k)\|\partial_y\mathcal{U}_\Phi\|_{L^2_t(H^{s-\frac{1}{4},0}_\Psi)}^2 \\[1ex]
        &\quad
        + \eta\Big(
            6
            + C(1+t)^{\frac{1}{2}}M^2 2
            + 4 k
            + C(1+t)^{\frac{1}{2}}\zeta^2 2 k
        \Big)
        \|\partial_y u_\Phi\|_{L^2_t(H^{s+1,0}_\Psi)}^2 \\[1ex]
        &\quad
        + \Big(
            6\eta + \frac{5}{\theta^E}
            + C(1+t)^{\frac{1}{2}}M^2 \frac{5}{\theta^E}
            + 2\eta k
            + C(1+t)^{\frac{1}{2}}\zeta^2 \frac{5}{\theta^E}k
        \Big)
        \|\partial_y \theta_\Phi\|_{L^2_t(H^{s+1,0}_\Psi)}^2\\[1ex]
        &\triangleq 2\|u_\Phi(0)\|_{H^{s+\frac{3}{2},0}_\Psi}^2
        + k\|\theta_\Phi(0)\|_{H^{s+1,0}_\Psi}^2
        + D_1 +\cdots + D_4.
    \end{aligned}
    \end{equation}
    Obviously, we have
    \begin{equation}\label{ineq:tau}
        \sup\limits_{t\in[0,\tau^*)}C\zeta(1+t)^{\frac{1}{4}}< \frac{\theta^E}{4}\quad\text{when}\quad \tau^*< \left(\frac{\theta^E}{4C\zeta}\right)^4-1=\left(\frac{\theta^E}{8C\epsilon}\right)^4-1.
    \end{equation}
    Therefore, we can take a sufficiently small 
    \begin{equation}\label{ineq:eta}
        \eta\leq \frac{\theta^E}{16}
    \end{equation}
    such that 
    \begin{equation}\label{ineq:eta2}
        \theta^E-C\zeta(1+t)^\frac{1}{4}-2\eta\geq\frac{\theta^E}{2}, \quad \forall t\in [0, \tau^*).
    \end{equation}

   Next, by choose $k$ properly large satisfying
    \begin{equation}
        k\geq \frac{\frac{3\theta^E}{8}+\frac{5}{\theta^E}+C(1+t)^\frac{1}{2}M^2\frac{5}{\theta^E}}{\frac{5\theta^E}{16}-C(1+t)^{\frac{1}{2}}\zeta^2\frac{5}{\theta^E}},  \quad \forall t\in [0, \tau^*),     \end{equation}
  whose denominator is positive from \eqref{ineq:tau}, one has  by using \eqref{ineq:eta}, \eqref{ineq:eta2} that
    \begin{equation}
        k(\theta^E-C\zeta(1+t)^\frac{1}{4}-2\eta)-\bigg[6\eta+\frac{5}{\theta^E}+C(1+t)^{\frac{1}{2}}M^2\frac{5}{\theta^E}+2\eta k+C(1+t)^{\frac{1}{2}}\zeta^2\frac{5}{\theta^E}k\bigg]\geq \frac{\theta^E}{16}k.
    \end{equation}
Thus, the term $D_4$ given on the right hand side of \eqref{ineq:priori} can be absorbed by the fourth line on the left hand side of \eqref{ineq:priori}. 

After fixing $k$ as above, choosing $\eta>0$ small satisfying \eqref{ineq:eta}, \eqref{ineq:eta2}, the terms 
   $D_2$ and $D_3$ on the right hand side of \eqref{ineq:priori} can be absorbed by the third line on the left hand side of \eqref{ineq:priori}, by noting that $M$ and $\zeta$ are two positive constants given in \eqref{ineq:assumption} and \eqref{def:assumption} respectively.

Finally, we can choose a large $\gamma$ such that $\gamma>C_\eta$ to absorb $D_1$ by the second line of \eqref{ineq:priori}.
    Thus, we conclude the estimate \eqref{ineq:thmpriori} from the above inequality \eqref{ineq:priori}.

  (2)  By summing up \eqref{ineq:partialyu} and \eqref{ineq:partialytheta} given in Proposition \ref{prop:partialyu}, it follows that
    \begin{equation}\label{4.10}
        \begin{aligned}
            &\|\partial_yu_\Phi(t)\|_{H^{s,0}_\Psi}^2+\|\partial_y\theta_\Phi(t)\|_{H^{s,0}_\Psi}^2+2(\gamma-C_\eta)(\|\partial_yu_\Phi\|_{L^2_{t,\dot{\mu}}(H^{s+\frac{1}{4},0}_\Psi)}^2+\|\partial_y\theta_\Phi\|_{L^2_{t,\dot{\mu}}(H^{s+\frac{1}{4},0}_\Psi)}^2)\\
            &+(\theta^E-C\zeta(1+t)^\frac{1}{4}-2\eta)(\|\partial_y^2u_\Phi\|_{L^2_t(H^{s,0}_\Psi)}^2+\|\partial_y^2\theta_\Phi\|_{L^2_t(H^{s,0}_\Psi)}^2)\\
            \leq& \|\partial_yu_\Phi(0)\|_{H^{s,0}_{\Psi_0}}^2+\|\partial_y\theta_\Phi(0)\|_{H^{s,0}_{\Psi_0}}^2+C(\|u_\Phi\|_{L^2_{t,\dot{\mu}}(H^{s+1,0}_\Psi)}^2+\|\theta_\Phi\|_{L^2_{t,\dot{\mu}}(H^{s+1,0}_\Psi)}^2)\\
            &+C(\|\partial_yu_\Phi\|_{L^2_{t,\dot{\mu}}(H^{s,0}_\Psi)}^2+\|\partial_y\theta_\Phi\|_{L^2_{t,\dot{\mu}}(H^{s,0}_\Psi)}^2)+2\eta(\|\partial_y^2u_\Phi\|_{L^2_t(H^{s,0}_\Psi)}^2+\|\partial_y^2\theta_\Phi\|_{L^2_t(H^{s,0}_\Psi)}^2).
        \end{aligned}
    \end{equation}
    By taking $\gamma$ sufficiently large and $\eta$ sufficiently small as above, we can absorb the two terms in the last line of the above inequality \eqref{4.10} by the left-hand side. By using \eqref{ineq:thmpriori}, we have
    \begin{equation}
        \begin{aligned}
            \|u_\Phi\|_{L^2_{t,\dot{\mu}}(H^{s+\frac{5}{4},0}_\Psi)}^2+\|\theta_\Phi\|_{L^2_{t,\dot{\mu}}(H^{s+\frac{5}{4},0}_\Psi)}^2
            &\leq \frac{1}{\gamma}\bigg(1+\frac{1}{k}\bigg)\bigg(2\|e^{\delta\langle D_x\rangle^{\frac{1}{2}}}u_0\|_{H^{s+\frac{3}{2},0}_{\Psi_0}}^2+k\|e^{\delta\langle D_x\rangle^{\frac{1}{2}}}\theta_0\|_{H^{s+1,0}_{\Psi_0}}^2\bigg)\\
            &\leq\frac{1}{2C}\bigg(\|e^{\delta\langle D_x\rangle^{\frac{1}{2}}}\partial_yu_0\|_{H^{s,0}_{\Psi_0}}^2+\|e^{\delta\langle D_x\rangle^{\frac{1}{2}}}\partial_y\theta_0\|_{H^{s,0}_{\Psi_0}}^2\bigg).
        \end{aligned}
    \end{equation} 
    provided that $\gamma$ is sufficiently large. Plugging it into \eqref{4.10}, it follows the estimate \eqref{ineq:thmpriori2} immediately. Similarly, we can get the estimate \eqref{ineq:thmpriori3} by using Proposition \ref{prop:partialy2}.

(3) \underline{\it Verification of Assumption \ref{assumption}}: From \eqref{ineq:thmpriori2}, we directly obtain that
    \begin{equation}\label{bootofu}
        \|\partial_yu_\Phi\|_{L_t^\infty(H^{s,0}_\Psi)}\leq \frac{\sqrt{6}}{2}\bigg(\|e^{\delta\langle D_x\rangle^{\frac{1}{2}}}\partial_yu_0\|_{H^{s,0}_{\Psi_0}}+\|e^{\delta\langle D_x\rangle^{\frac{1}{2}}}\partial_y\theta_0\|_{H^{s,0}_{\Psi_0}}\bigg)=\frac{\sqrt{6}}{4}M.
    \end{equation} 
    Moreover, due to the inequality \eqref{ineq:partialytheta}, we have
    \begin{equation}\label{ineq:partialytheta2}
        \begin{aligned}
            \|\partial_y\theta_\Phi\|_{L_t^\infty(H^{s,0}_\Psi)}^2\leq& \|\partial_y\theta_\Phi(0)\|_{H^{s,0}_\Psi}^2+C(\|u_\Phi\|_{L^2_{t,\dot{\mu}}(H^{s+1,0}_\Psi)}^2+\|\partial_yu_\Phi\|_{L^2_{t,\dot{\mu}}(H^{s,0}_\Psi)}^2\\
            &+\|\theta_\Phi\|_{L^2_{t,\dot{\mu}}(H^{s+1,0}_\Psi)}^2)+2\eta\|\partial_y^2u_\Phi\|_{L^2_t(H^{s,0}_\Psi)}^2.
        \end{aligned}
    \end{equation}
    By using \eqref{ineq:thmpriori} and \eqref{ineq:thmpriori2}, we can also obtain that
    \begin{equation}
        \begin{aligned}
            &\|u_\Phi\|_{L^2_{t,\dot{\mu}}(H^{s+1,0}_\Psi)}^2+\|\partial_yu_\Phi\|_{L^2_{t,\dot{\mu}}(H^{s,0}_\Psi)}^2+\|\theta_\Phi\|_{L^2_{t,\dot{\mu}}(H^{s+1,0}_\Psi)}^2\\
            &\leq \frac{1}{\gamma}\bigg[\bigg(1+\frac{1}{k}\bigg)\bigg(2\|e^{\delta\langle D_x\rangle^{\frac{1}{2}}}u_0\|_{H^{s+\frac{3}{2},0}_{\Psi_0}}+k\|e^{\delta\langle D_x\rangle^{\frac{1}{2}}}\theta_0\|_{H^{s+1,0}_{\Psi_0}}\bigg)\\
            &\hspace{.2in}+\frac{3}{2}\bigg(\|e^{\delta\langle D_x\rangle^{\frac{1}{2}}}\partial_yu_0\|_{H^{s,0}_{\Psi_0}}^2+\|e^{\delta\langle D_x\rangle^{\frac{1}{2}}}\partial_y\theta_0\|_{H^{s,0}_{\Psi_0}}^2\bigg)\bigg]\leq \frac{\zeta^2}{4C}
        \end{aligned}
    \end{equation}
    by choosing large $\gamma$ properly.
    Similarly, we choose a small enough $\eta$ such that
    \begin{equation}
        2\eta\|\partial_y^2u_\Phi\|_{L^2_t(H^{s,0}_\Psi)}^2\leq 2\eta\frac{16}{\theta^E}\cdot\frac{3}{2}\bigg(\|e^{\delta\langle D_x\rangle^{\frac{1}{2}}}\partial_yu_0\|_{H^{s,0}_{\Psi_0}}^2+\|e^{\delta\langle D_x\rangle^{\frac{1}{2}}}\partial_y\theta_0\|_{H^{s,0}_{\Psi_0}}^2\bigg)\leq\frac{\zeta^2}{4}.
    \end{equation}
    Therefore, by plugging the above two inequalities into \eqref{ineq:partialytheta2} and using \eqref{assumption2}, it follows that
    \begin{equation}\label{bootoftheta}
        \|\partial_y\theta_\Phi\|_{L_t^\infty(H^{s,0}_\Psi)}\leq \sqrt{\frac{\zeta^2}{4}+\frac{\zeta^2}{4}+\frac{\zeta^2}{4}}=\frac{\sqrt{3}}{2}\zeta<\zeta.
    \end{equation}
    These two inequalities \eqref{bootofu} and \eqref{bootoftheta} improve the assumption \eqref{ineq:assumption}. Hence \eqref{ineq:assumption} holds for all $0<t<T^*$ by a continuity argument. 

(4)
Finally, it order to make sense of the multiplier 
$$u_\Phi(t,x,y)={\cal F}^{-1}_{\xi\to x}(e^{\Phi(t,\xi)}\hat{u}(t,\xi,y))$$
with $\Phi(t,\xi)=(\delta-\gamma\mu(t))\langle\xi\rangle^{\frac 12}$, and $\mu(t)$ being defined in \eqref{eq:mu}, we come to estimate the right hand side of \eqref{eq:mu}.

Obviously, the integration of the terms given on right hand side of \eqref{eq:mu} satisfy
    \begin{equation}
        \begin{aligned}
            &\int_0^t \left( 1 + (1+t')^{\frac{1}{4}} \big(\|(\partial_y u_\Phi, \partial_y \theta_\Phi)\|_{H^{\frac{5}{2}+,0}_\Psi} 
            + \|(\partial_y^2 u_\Phi, \partial_y^2 \theta_\Phi)\|_{H^{\frac{3}{2}+,1}_\Psi} \big) \right) dt' \\
            &\quad \lesssim t + \big((1+t)^{\frac{3}{2}} - 1\big)^{\frac{1}{2}} 
            \big(\|(\partial_y u_\Phi, \partial_y \theta_\Phi)\|_{L_t^2(H^{\frac{5}{2}+,0}_\Psi)} 
            + \|(\partial_y^2 u_\Phi, \partial_y^2 \theta_\Phi)\|_{L_t^2(H^{\frac{3}{2}+,1}_\Psi)} \big), 
                \end{aligned}
    \end{equation}

   \begin{equation}
        \begin{aligned}  
            &\int_0^t (1+t')^{\frac{1}{2}} \big(\|u_\Phi\|_{H^{\frac{5}{2}+,0}_\Psi}^2 
            + \|\theta_\Phi\|_{H^{\frac{3}{2}+,0}_\Psi}^2 
            + \|(\partial_y u_\Phi, \partial_y \theta_\Phi)\|_{H^{\frac{5}{2}+,0}_\Psi}^2 
            + \|(\partial_y^2 u_\Phi, \partial_y^2 \theta_\Phi)\|_{H^{\frac{3}{2}+,0}_\Psi}^2 \big) dt' \\
            &\quad \lesssim \big((1+t)^{\frac{3}{2}} - 1\big) 
            \big(\|u_\Phi\|_{L_t^\infty(H^{\frac{5}{2}+,0}_\Psi)}^2 
            + \|\theta_\Phi\|_{L_t^\infty(H^{\frac{3}{2}+,0}_\Psi)}^2 
            + \|(\partial_y u_\Phi, \partial_y \theta_\Phi)\|_{L_t^\infty(H^{\frac{5}{2}+,0}_\Psi)}^2 \\
            &\qquad + \|(\partial_y^2 u_\Phi, \partial_y^2 \theta_\Phi)\|_{L_t^\infty(H^{\frac{3}{2}+,0}_\Psi)}^2 \big), 
              \end{aligned}
    \end{equation}

   \begin{equation}
        \begin{aligned}        
            &\int_0^t \big(\|\theta_\Phi\|_{H^{\frac{1}{2}+,0}_\Psi}^4 
            + (1+t') \|(\partial_y u_\Phi, \partial_y \theta_\Phi)\|_{H^{\frac{3}{2}+,1}_\Psi}^4 \big) dt' \\
            &\quad \lesssim t \big(\|\theta_\Phi\|_{L_t^\infty(H^{\frac{1}{2}+,0}_\Psi)}^4 
            + (1+t) \|(\partial_y u_\Phi, \partial_y \theta_\Phi)\|_{L_t^\infty(H^{\frac{3}{2}+,1}_\Psi)}^4 \big), 
              \end{aligned}
    \end{equation}

   \begin{equation}
        \begin{aligned}   &\int_0^t (1+t')^{\frac{1}{2}} \|\partial_y \theta_\Phi\|_{H^{\frac{3}{2}+,0}_\Psi} 
            \big(\|\partial_y^3 u_\Phi\|_{H^{\frac{3}{2}+,0}_\Psi} 
            + \|\partial_y^3 \theta_\Phi\|_{H^{\frac{3}{2}+,0}_\Psi} \big) dt' \\
            &\quad \lesssim t^{\frac{1}{2}} (1+t)^{\frac{1}{2}} 
            \big(\|\partial_y^3 u_\Phi\|_{L_t^2(H^{\frac{3}{2}+,0}_\Psi)} 
            + \|\partial_y^3 \theta_\Phi\|_{L_t^2(H^{\frac{3}{2}+,0}_\Psi)} \big) 
            \|\partial_y \theta_\Phi\|_{L_t^\infty(H^{\frac{3}{2}+,0}_\Psi)}, 
              \end{aligned}
    \end{equation}
and
   \begin{equation}
        \begin{aligned}  
            &\int_0^t \left(\big(\|\partial_y^3 u_\Phi\|_{H^{\frac{1}{2}+,0}_\Psi} 
            \big(\|\partial_y u_\Phi\|_{H^{\frac{1}{2}+,0}_\Psi} 
            + \|\partial_y^2 u_\Phi\|_{H^{\frac{1}{2}+,0}_\Psi} 
            + \|\theta_\Phi\|_{H^{\frac{1}{2}+,0}_\Psi} \big)+ \|\partial_y^3 \theta_\Phi\|_{H^{\frac{1}{2}+,0}_\Psi} 
            \|\partial_y u_\Phi\|_{H^{\frac{1}{2}+,0}_\Psi} \big)\right) dt' \\
            &\quad \lesssim t^{\frac{1}{2}} 
            \big(\|\partial_y^3 u_\Phi\|_{L_t^2(H^{\frac{1}{2}+,0}_\Psi)} 
            + \|\partial_y^3 \theta_\Phi\|_{L_t^2(H^{\frac{1}{2}+,0}_\Psi)} \big) 
            \big(\|\partial_y u_\Phi\|_{L_t^\infty(H^{\frac{1}{2}+,0}_\Psi)} 
            + \|\partial_y^2 u_\Phi\|_{L_t^\infty(H^{\frac{1}{2}+,0}_\Psi)} 
            + \|\theta_\Phi\|_{L_t^\infty(H^{\frac{1}{2}+,0}_\Psi)} \big).
        \end{aligned}
    \end{equation}
    When $s>\frac{5}{2}$, by using the estimates given in \eqref{ineq:thmpriori}-\eqref{ineq:thmpriori3},  it deduces that all the norms on the right hand sides of the above five inequalities can be bounded by the initial data. Hence from the definition of $\mu(t)$ given in \eqref{eq:mu}, there exists a small $T^\star>0$ such that
    \begin{equation}
        \sup\limits_{t\in[0,T^\star)}\mu(t)\leq \frac{\delta}{2\gamma}.
    \end{equation}
    Therefore, from \eqref{def:T*}, it implies  that $T^\star\leq T^*$. 
    
    Thus we conclude that \eqref{ineq:thmpriori}-\eqref{ineq:thmpriori3} hold for any $0<t<T$, where $T=\min\{\tau^*,T^\star\}$. This completes the proof of Theorem \ref{thm:priori}.
\end{proof}

\begin{remark}
    We deduce from Lemma \ref{lem:Poincare} and the standard Sobolev embedding that
    \begin{equation}
        \|\theta_\Phi\|_{L_t^\infty(L_+^\infty)} \leq C_*\|\theta_\Phi\|_{L_t^\infty(L_y^\infty(H_x^{\frac{1}{2}+}))} \leq C_*(2\pi\theta^E)^\frac{1}{4}(1+t)^{\frac{1}{4}} \|\partial_y\theta_\Phi\|_{L_t^\infty(H^{s,0}_\Psi)},
    \end{equation}
    where $C_*$ is a constant only depending on the Sobolev embedding. Under Assumption \ref{assumption}, for any $t<T$, we have
    \begin{equation}
        \|\theta_\Phi\|_{L_t^\infty(L_+^\infty)} \leq C_*(2\pi\theta^E)^\frac{1}{4}(1+T)^{\frac{1}{4}}\zeta=2C_*(2\pi\theta^E)^\frac{1}{4}(1+T)^{\frac{1}{4}}\epsilon.
    \end{equation}
    Therefore, by taking $\epsilon\leq\dfrac{\theta^E}{4C_* (2\pi\theta^E)^\frac{1}{4}(1+T)^{\frac{1}{4}}}$, we have $\|\theta_\Phi\|_{L_t^\infty(L_+^\infty)}\leq\dfrac{\theta^E}{2}$, which implies
    \begin{equation}\label{ineq:positivity}
        \theta + \theta^E \geq \frac{\theta^E}{2} > 0.
    \end{equation}
    This lower bound guarantees the strict positivity of the coefficient in the dissipative term of \eqref{eq:main}.
\end{remark}

Since the proof of the main result on the well-posedness of \eqref{eq:main} is similar to that in analytic setting once we have the a priori estimate given in Theorem \ref{thm:priori}, we will only give a sketch of the proof, and refer to \cite[\S4]{wang2025compressible} for details. 
\begin{proof}[Proof of Theorem \ref{thm:main}]
    In order to obtain the existence of solutions to \eqref{eq:main}, there are two main steps, the first one is to construct a regularized boundary layer equation by adding artificial viscosity $\nu>0$ in order to obtain an appropriate approximate solution:
    \begin{equation}\left\{
        \begin{aligned}
            &\partial_t u^\nu+u^\nu\partial_x u^\nu+v^\nu\partial_y u^\nu=\nu\partial_x^2 u^\nu+(\theta^\nu+\theta^E)\partial_y^2 u^\nu,\\
            &\partial_t \theta^\nu+u^\nu\partial_x \theta^\nu+v^\nu\partial_y \theta^\nu=\nu\partial_x^2\theta^\nu+(\theta^\nu+\theta^E)\partial_y^2 \theta^\nu+(\theta^\nu+\theta^E)(\partial_y u^\nu)^2,\\
            &\partial_x u^\nu+\partial_y v^\nu=\partial_y^2\theta^\nu+(\partial_y u^\nu)^2,\\
            &u^\nu|_{y=0}=v^\nu|_{y=0}=\partial_y\theta^\nu|_{y=0}=0,\quad \lim\limits_{y\to+\infty}u^\nu=\lim\limits_{y\to+\infty}\theta^\nu=0,\\
            &(u^\nu,\theta^\nu)|_{t=0}=(u_0,\theta_0)(x,y),
        \end{aligned}
    \right.
    \end{equation}
    then, the second step is to pass to the limit $\nu\to 0$, of approximate problems, by deducing uniform estimates of approximate solution sequence and using a  compact argument.
    The uniqueness of solutions can be proved by using a similar argument as in \cite[\S4]{wang2025compressible}, so we omit the details for brevity. Thus we complete the proof of Theorem \ref{thm:main}.
\end{proof}

\begin{appendices}
\section{Proof of Lemma \ref{lem:rightoflambda}}\label{app:A}
In this section, we provide the proof of Lemma \ref{lem:rightoflambda}.
\begin{proof}[Proof of Lemma \ref{lem:rightoflambda}]
    Recall that in Section \ref{sec:priori}, we defined the right-hand side of \eqref{eq:inneroflambda} as $R$. Therefore, from the equation of $\lambda$ given in \eqref{eq:lambda}, we can decompose $R$ into the following terms:
    \begin{equation}\label{A-1}
        \begin{aligned}
            R=&-\langle [T_u\partial_x\lambda]_\Phi,\lambda_\Phi\rangle_{H^{s+\frac{1}{2},0}_\Psi}-\langle [T_v\partial_y\lambda]_\Phi,\lambda_\Phi\rangle_{H^{s+\frac{1}{2},0}_\Psi}+\langle [T_\theta\partial_y^2\lambda]_\Phi,\lambda_\Phi\rangle_{H^{s+\frac{1}{2},0}_\Psi}-\langle [(\partial_xu)^2]_\Phi,\lambda_\Phi\rangle_{H^{s+\frac{1}{2},0}_\Psi}\\
            &-\langle [T_{\partial_xv}\partial_yu]_\Phi,\lambda_\Phi\rangle_{H^{s+\frac{1}{2},0}_\Psi}-\langle [R(\partial_xv,\partial_yu)]_\Phi,\lambda_\Phi\rangle_{H^{s+\frac{1}{2},0}_\Psi}+\langle[\partial_x\theta\partial_y^2u]_\Phi,\lambda_\Phi\rangle_{H^{s+\frac{1}{2},0}_\Psi}\\
            &-\langle [T_{\partial_x^2u}u]_\Phi,\lambda_\Phi\rangle_{H^{s+\frac{1}{2},0}_\Psi}-\langle [R(\partial_x^2u,u)]_\Phi,\lambda_\Phi\rangle_{H^{s+\frac{1}{2},0}_\Psi}-\langle [T_{\partial_x\partial_yu}v]_\Phi,\lambda_\Phi\rangle_{H^{s+\frac{1}{2},0}_\Psi}\\
            &-\langle [R(\partial_x\partial_yu,v)]_\Phi,\lambda_\Phi\rangle_{H^{s+\frac{1}{2},0}_\Psi}-\langle [T_{\partial_x\partial_y^2u}\theta]_\Phi,\lambda_\Phi\rangle_{H^{s+\frac{1}{2},0}_\Psi}-\langle [R(\partial_x\partial_y^2u,\theta)]_\Phi,\lambda_\Phi\rangle_{H^{s+\frac{1}{2},0}_\Psi}\\
            &-\langle \left[T_{(-\partial_yu\partial_xu-\partial_yv\partial_yu+\partial_y\theta\partial_y^2u)}\int_0^y\mathcal{U}d\tilde{y}\right]_\Phi,\lambda_\Phi\rangle_{H^{s+\frac{1}{2},0}_\Psi}\\
            &-\langle \left[[(T_uT_{\partial_x\partial_yu}-T_{u\partial_x\partial_yu})+(T_vT_{\partial_y^2u}-T_{v\partial_y^2u})-(T_\theta T_{\partial^3_yu}-T_{\theta\partial^3_yu})]\int_0^y\mathcal{U}d\tilde{y}\right]_\Phi,\lambda_\Phi\rangle_{H^{s+\frac{1}{2},0}_\Psi}\\
            &-\langle \left[[T_u;T_{\partial_y u}]\partial_x \int_0^y \mathcal{U}d \tilde{y}\right]_\Phi,\lambda_\Phi\rangle_{H^{s+\frac{1}{2},0}_\Psi}-\langle [[T_v;T_{\partial_y u}]\mathcal{U}]_\Phi,\lambda_\Phi\rangle_{H^{s+\frac{1}{2},0}_\Psi}\\
            &+\langle [[T_\theta;T_{\partial_y u}]\partial_y\mathcal{U}]_\Phi,\lambda_\Phi\rangle_{H^{s+\frac{1}{2},0}_\Psi}+2\langle [T_{(\theta+\theta^E)}T_{\partial_y^2u}\mathcal{U}]_\Phi,\lambda_\Phi\rangle_{H^{s+\frac{1}{2},0}_\Psi}\\
            \triangleq&B_1+\cdots+B_{19}.
        \end{aligned}
    \end{equation}

    (1) \underline{Estimates of $B_1,B_2$ and $B_3$}.
    We first get, by a similar derivation of the estimates of $A_1,A_2$ and $A_3$ given in Lemma \ref{lem:rightofmathcalU}, that
     \begin{equation}
        \begin{aligned}
            |B_1|\leq& C\dot{\mu}(t)\|\lambda_\Phi\|_{H^{s+\frac{3}{4},0}_\Psi}^2,\\
            |B_2|\leq& C_\eta\dot{\mu}(t)\|\lambda_\Phi\|_{H^{s+\frac{1}{2},0}_\Psi}^2+\eta\|\partial_y\lambda_\Phi\|_{H^{s+\frac{1}{2},0}_\Psi}^2,\\
            |B_3|\leq& C_\eta\dot{\mu}(t)\|\lambda_\Phi\|_{H^{s+\frac{1}{2},0}_\Psi}^2+(\eta+C\zeta(1+t)^\frac{1}{4})\|\partial_y\lambda_\Phi\|_{H^{s+\frac{1}{2},0}_\Psi}^2.
        \end{aligned}
    \end{equation}

    (2) \underline{Estimate of $B_4$}.
    By applying Bony's decomposition to $(\partial_xu)^2$, we have
    \begin{equation}
        \begin{aligned}
            |B_4|\leq& 2|\langle[T_{\partial_xu}\partial_xu]_\Phi,\lambda_\Phi\rangle_{H^{s+\frac{1}{2},0}_\Psi}|+|\langle[R(\partial_xu,\partial_xu)]_\Phi,\lambda_\Phi\rangle_{H^{s+\frac{1}{2},0}_\Psi}|\\
            \leq& C(1+t)^\frac{1}{4}\|\partial_yu_\Phi\|_{H^{\frac{3}{2}+,0}_\Psi}\|u_\Phi\|_{H^{s+\frac{5}{4},0}_\Psi}\|\lambda_\Phi\|_{H^{s+\frac{3}{4},0}_\Psi}\\
            \leq& C\dot{\mu}(t)(\|u_\Phi\|_{H^{s+\frac{5}{4},0}_\Psi}^2+\|\lambda_\Phi\|_{H^{s+\frac{3}{4},0}_\Psi}^2).
        \end{aligned}
    \end{equation}

    (3) \underline{Estimates of $B_5$ and $B_6$}.
    It follows from a similar derivation of the estimates of $A_2$ that
    \begin{equation}
        \begin{aligned}
            |B_5|+|B_6|\leq& C\left((1+t)^\frac{1}{4}\|u_\Phi\|_{H^{\frac{5}{2}+,0}_\Psi}+(1+t)^\frac{1}{4}\|\partial_y^2\theta_\Phi\|_{H^{\frac{3}{2}+,0}_\Psi}+\|\partial_yu_\Phi\|_{H^{\frac{3}{2}+,0}_\Psi}^2\right)\|\partial_yu_\Phi\|_{H^{s+\frac{1}{2},0}_\Psi}\|\lambda_\Phi\|_{H^{s+\frac{1}{2},0}_\Psi}\\
            \leq& C_\eta\left((1+t)^\frac{1}{2}\|u_\Phi\|_{H^{\frac{5}{2}+,0}_\Psi}^2+(1+t)^\frac{1}{2}\|\partial_y^2\theta_\Phi\|_{H^{\frac{3}{2}+,0}_\Psi}^2+\|\partial_yu_\Phi\|_{H^{\frac{3}{2}+,0}_\Psi}^4\right)\|\lambda_\Phi\|_{H^{s+\frac{1}{2},0}_\Psi}^2\\
            &+\eta\|\partial_yu_\Phi\|_{H^{s+\frac{1}{2},0}_\Psi}^2\\
            \leq& C_\eta\dot{\mu}(t)\|\lambda_\Phi\|_{H^{s+\frac{1}{2},0}_\Psi}^2+\eta\|\partial_yu_\Phi\|_{H^{s+\frac{1}{2},0}_\Psi}^2.
        \end{aligned}
    \end{equation}

    (4) \underline{Estimate of $B_7$}.
    By applying Bony's decomposition to $\partial_x\theta\partial_y^2u$, we have
    \begin{equation}
        \begin{aligned}
            B_7=&\langle[T_{\partial_x\theta}\partial_y^2u]_\Phi,\lambda_\Phi\rangle_{H^{s+\frac{1}{2},0}_\Psi}+\langle[T_{\partial_y^2u}\partial_x\theta]_\Phi,\lambda_\Phi\rangle_{H^{s+\frac{1}{2},0}_\Psi}+\langle[R(\partial_x\theta,\partial_y^2u)]_\Phi,\lambda_\Phi\rangle_{H^{s+\frac{1}{2},0}_\Psi}\\
            \triangleq&B_{7,1}+B_{7,2}+B_{7,3}.
        \end{aligned}
    \end{equation}
    Similarly to the estimate of $A_3$ given in Lemma \ref{lem:rightofmathcalU}, by using integration by parts, we have
    \begin{equation}
        \begin{aligned}
            |B_{7,1}|\leq& |\langle[T_{\partial_x\theta}\partial_yu]_\Phi,\partial_y\lambda_\Phi\rangle_{H^{s+\frac{1}{2},0}_\Psi}|+|\langle[T_{\partial_x\partial_y\theta}\partial_yu]_\Phi,\lambda_\Phi\rangle_{H^{s+\frac{1}{2},0}_\Psi}|+2|\langle\partial_y\Psi[T_{\partial_x\theta}\partial_yu]_\Phi,\lambda_\Phi\rangle_{H^{s+\frac{1}{2},0}_\Psi}|\\
            \leq& C\zeta(1+t)^\frac{1}{4}\|\partial_yu_\Phi\|_{H^{s+\frac{1}{2},0}_\Psi}\|\partial_y\lambda_\Phi\|_{H^{s+\frac{1}{2},0}_\Psi}+C(1+t)^\frac{1}{4}\|\partial_y^2\theta_\Phi\|_{H^{\frac{3}{2}+,0}_\Psi}\|\partial_yu_\Phi\|_{H^{s+\frac{1}{2},0}_\Psi}\|\lambda_\Phi\|_{H^{s+\frac{1}{2},0}_\Psi}\\
            &+C(1+t)^\frac{1}{4}\left(\|\partial_y^2\theta_\Phi\|_{H^{\frac{3}{2}+,0}_\Psi}+(1+t)^{-1}\|\theta_\Phi\|_{H^{\frac{3}{2}+,0}_\Psi}\right)\|\partial_yu_\Phi\|_{H^{s+\frac{1}{2},0}_\Psi}\|\lambda_\Phi\|_{H^{s+\frac{1}{2},0}_\Psi}\\
            \leq& C\zeta(1+t)^\frac{1}{4}\|\partial_yu_\Phi\|_{H^{s+\frac{1}{2},0}_\Psi}\|\partial_y\lambda_\Phi\|_{H^{s+\frac{1}{2},0}_\Psi}+\eta\|\partial_yu_\Phi\|_{H^{s+\frac{1}{2},0}_\Psi}^2\\
            &+C_\eta\left((1+t)^\frac{1}{2}\|\partial_y^2\theta_\Phi\|_{H^{\frac{3}{2}+,0}_\Psi}^2+(1+t)^{-\frac{3}{2}}\|\theta_\Phi\|_{H^{\frac{3}{2}+,0}_\Psi}^2\right)\|\lambda_\Phi\|_{H^{s+\frac{1}{2},0}_\Psi}^2\\
            \leq& C\zeta(1+t)^\frac{1}{4}(\|\partial_yu_\Phi\|_{H^{s+\frac{1}{2},0}_\Psi}^2+\|\partial_y\lambda_\Phi\|_{H^{s+\frac{1}{2},0}_\Psi}^2)+C_\eta\dot{\mu}(t)\|\lambda_\Phi\|_{H^{s+\frac{1}{2},0}_\Psi}^2+\eta\|\partial_yu_\Phi\|_{H^{s+\frac{1}{2},0}_\Psi}^2.
        \end{aligned}
    \end{equation}
    Whereas for $B_{7,2}$ and $B_{7,3}$, by using Holder's inequality, we have
    \begin{equation}
        \begin{aligned}
            |B_{7,2}|+|B_{7,3}|\leq& C(1+t)^\frac{1}{4}\|\partial_y^3u_\Phi\|_{H^{\frac{1}{2}+,0}_\Psi}\|\theta_\Phi\|_{H^{s+\frac{5}{4},0}_\Psi}\|\lambda_\Phi\|_{H^{s+\frac{3}{4},0}_\Psi}\\
            \leq& C\dot{\mu}(t)(\|\theta_\Phi\|_{H^{s+\frac{5}{4},0}_\Psi}\|\lambda_\Phi\|_{H^{s+\frac{3}{4},0}_\Psi})\\
            \leq& C\dot{\mu}(t)(\|\theta_\Phi\|_{H^{s+\frac{5}{4},0}_\Psi}^2+\|\lambda_\Phi\|_{H^{s+\frac{3}{4},0}_\Psi}^2).
        \end{aligned}
    \end{equation}
   Summarizing the above estimates of $B_{7,1}, B_{7,2}$ and $B_{7,3}$, it follows
    \begin{equation}
        \begin{aligned}
        |B_7|\leq& C\zeta(1+t)^\frac{1}{4}(\|\partial_yu_\Phi\|_{H^{s+\frac{1}{2},0}_\Psi}^2+\|\partial_y\lambda_\Phi\|_{H^{s+\frac{1}{2},0}_\Psi}^2)\\
        &+C_\eta\dot{\mu}(t)\|\lambda_\Phi\|_{H^{s+\frac{3}{4},0}_\Psi}^2+\eta\|\partial_yu_\Phi\|_{H^{s+\frac{1}{2},0}_\Psi}^2+C\dot{\mu}(t)\|\theta_\Phi\|_{H^{s+\frac{5}{4},0}_\Psi}^2.
        \end{aligned}
    \end{equation}

    (5) \underline{Estimates of $B_8$ and $B_9$}.
    We deduce from Lemmas \ref{lem:paraestimate} and \ref{lem:Poincare} that
    \begin{equation}
        \begin{aligned}
            |B_8|+|B_9|\leq& C(1+t)^\frac{1}{4}\|u_\Phi\|_{H^{\frac{5}{2}+,0}_\Psi}\|u_\Phi\|_{H^{s+\frac{1}{2},0}_\Psi}\|\lambda_\Phi\|_{H^{s+\frac{1}{2},0}_\Psi}\\
            \leq& C\dot{\mu}(t)\|u_\Phi\|_{H^{s+\frac{1}{2},0}_\Psi}\|\lambda_\Phi\|_{H^{s+\frac{1}{2},0}_\Psi}\\
            \leq& C\dot{\mu}(t)(\|u_\Phi\|_{H^{s+\frac{1}{2},0}_\Psi}^2+\|\lambda_\Phi\|_{H^{s+\frac{1}{2},0}_\Psi}^2).
        \end{aligned}
    \end{equation}

    (6) \underline{Estimates of $B_{10}$ and $B_{11}$}.
    A direct computation gives
    \begin{equation}
        \begin{aligned}
            B_{10}=&-\langle \left[T_{\partial_x\partial_yu}\int_0^y\partial_xud\tilde{y}\right]_\Phi,\lambda_\Phi\rangle_{H^{s+\frac{1}{2},0}_\Psi}+\langle [T_{\partial_x\partial_yu}\partial_y\theta]_\Phi,\lambda_\Phi\rangle_{H^{s+\frac{1}{2},0}_\Psi}\\
            &+\langle \left[T_{\partial_x\partial_yu}\int_0^y(\partial_yu)^2d\tilde{y}\right]_\Phi,\lambda_\Phi\rangle_{H^{s+\frac{1}{2},0}_\Psi}\\
            \triangleq&B_{10,1}+B_{10,2}+B_{10,3}.
        \end{aligned}
    \end{equation}
    By applying Lemmas \ref{lem:paraestimate} and \ref{lem:Poincare}, we obtain
    \begin{equation}
        \begin{aligned}
            |B_{10,1}|\leq& C(1+t)^\frac{1}{4}\|\partial_yu_\Phi\|_{H^{\frac{3}{2}+,0}_\Psi}\|u_\Phi\|_{H^{s+\frac{5}{4},0}_\Psi}\|\lambda_\Phi\|_{H^{s+\frac{3}{4},0}_\Psi}\\
            \leq& C\dot{\mu}(t)\|u_\Phi\|_{H^{s+\frac{5}{4},0}_\Psi}\|\lambda_\Phi\|_{H^{s+\frac{3}{4},0}_\Psi}\\
            \leq& C\dot{\mu}(t)(\|u_\Phi\|_{H^{s+\frac{5}{4},0}_\Psi}^2+\|\lambda_\Phi\|_{H^{s+\frac{3}{4},0}_\Psi}^2)
        \end{aligned}
    \end{equation}
    and
    \begin{equation}
        \begin{aligned}
            |B_{10,2}|\leq& C(1+t)^\frac{1}{4}\|\partial_y^2u_\Phi\|_{H^{\frac{3}{2}+,0}_\Psi}\|\partial_y\theta_\Phi\|_{H^{s+\frac{1}{2},0}_\Psi}\|\lambda_\Phi\|_{H^{s+\frac{1}{2},0}_\Psi}\\
            \leq& C_\eta(1+t)^\frac{1}{2}\|\partial_y^2u_\Phi\|_{H^{\frac{3}{2}+,0}_\Psi}^2\|\lambda_\Phi\|_{H^{s+\frac{1}{2},0}_\Psi}^2+\eta\|\partial_y\theta_\Phi\|_{H^{s+\frac{1}{2},0}_\Psi}^2\\
            \leq& C_\eta\dot{\mu}(t)\|\lambda_\Phi\|_{H^{s+\frac{1}{2},0}_\Psi}^2+\eta\|\partial_y\theta_\Phi\|_{H^{s+\frac{1}{2},0}_\Psi}^2.
        \end{aligned}
    \end{equation}
    By using Lemma \ref{lem:paraestimate} and Minkowski's inequality, it follows that
    \begin{equation}
        \begin{aligned}
            |B_{10,3}|\leq& C\|\partial_yu_\Phi\|_{H^{\frac{3}{2}+,0}_\Psi}\|\int_0^y[(\partial_yu)^2]_\Phi d\tilde{y}\|_{L_y^\infty(H_x^{s+\frac{1}{2}})}\|\lambda_\Phi\|_{H^{s+\frac{1}{2},0}_\Psi}\\
            \leq& C\|\partial_yu_\Phi\|_{H^{\frac{3}{2}+,0}_\Psi}\int_0^\infty\|[(\partial_yu)^2]_\Phi\|_{H_x^{s+\frac{1}{2}}}d\tilde{y}\|\lambda_\Phi\|_{H^{s+\frac{1}{2},0}_\Psi}.
        \end{aligned}
    \end{equation}
    Applying Bony's decomposition to $(\partial_yu)^2$ and Holder's inequality, it gives
    \begin{equation}
        \begin{aligned}
            |B_{10,3}|\leq& C\|\partial_yu_\Phi\|_{H^{\frac{3}{2}+,0}_\Psi}\int_0^\infty\|[T_{\partial_yu}\partial_yu]_\Phi\|_{H_x^{s+\frac{1}{2}}}d\tilde{y}\|\lambda_\Phi\|_{H^{s+\frac{1}{2},0}_\Psi}\\
            &+C\|\partial_yu_\Phi\|_{H^{\frac{3}{2}+,0}_\Psi}\int_0^\infty\|[R(\partial_yu,\partial_yu)]_\Phi\|_{H_x^{s+\frac{1}{2}}}d\tilde{y}\|\lambda_\Phi\|_{H^{s+\frac{1}{2},0}_\Psi}\\
            \leq& C\|\partial_yu_\Phi\|_{H^{\frac{3}{2}+,0}_\Psi}^2\|\partial_yu_\Phi\|_{H^{s+\frac{1}{2},0}_\Psi}\|\lambda_\Phi\|_{H^{s+\frac{1}{2},0}_\Psi}\\
            \leq& C_\eta\|\partial_yu_\Phi\|_{H^{\frac{3}{2}+,0}_\Psi}^4\|\lambda_\Phi\|_{H^{s+\frac{1}{2},0}_\Psi}^2+\eta\|\partial_yu_\Phi\|_{H^{s+\frac{1}{2},0}_\Psi}^2\\
            \leq& C_\eta\dot{\mu}(t)\|\lambda_\Phi\|_{H^{s+\frac{1}{2},0}_\Psi}^2+\eta\|\partial_yu_\Phi\|_{H^{s+\frac{1}{2},0}_\Psi}^2.
        \end{aligned}
    \end{equation}
    One can study $B_{11}$ in a way similar to the above for $B_{10}$, and concludes that 
    \begin{equation}
        |B_{10}|+|B_{11}|\leq C_\eta\dot{\mu}(t)\|\lambda_\Phi\|_{H^{s+\frac{3}{4},0}_\Psi}^2+C\dot{\mu}(t)\|u_\Phi\|_{H^{s+\frac{5}{4},0}_\Psi}^2+\eta(\|\partial_yu_\Phi\|_{H^{s+\frac{1}{2},0}_\Psi}^2+\|\partial_y\theta_\Phi\|_{H^{s+\frac{1}{2},0}_\Psi}^2).
    \end{equation}

    (7) \underline{Estimates of $B_{12}$ and $B_{13}$}.
    In the same way as the estimates of $B_8,B_9$, we have
    \begin{equation}
        \begin{aligned}
            |B_{12}|+|B_{13}|\leq& C(1+t)^\frac{1}{4}\|\partial_y^3u_\Phi\|_{H^{\frac{3}{2}+,0}_\Psi}\|\theta_\Phi\|_{H^{s+\frac{1}{2},0}_\Psi}\|\lambda_\Phi\|_{H^{s+\frac{1}{2},0}_\Psi}\\
            \leq& C\dot{\mu}(t)\|\theta_\Phi\|_{H^{s+\frac{1}{2},0}_\Psi}\|\lambda_\Phi\|_{H^{s+\frac{1}{2},0}_\Psi}\\
            \leq& C\dot{\mu}(t)(\|\theta_\Phi\|_{H^{s+\frac{1}{2},0}_\Psi}^2+\|\lambda_\Phi\|_{H^{s+\frac{1}{2},0}_\Psi}^2).
        \end{aligned}
    \end{equation}

    (8) \underline{Estimate of $B_{14}$}.
    From the third equation of \eqref{eq:main}, we get
    \begin{equation}
        \begin{aligned}
            B_{14}=&-\langle \left[T_{(-\partial_yu\partial_xu-\partial_yv\partial_yu+\partial_y\theta\partial_y^2u)}\int_0^y\mathcal{U} d\tilde{y}\right]_\Phi,\lambda_\Phi\rangle_{H^{s+\frac{1}{2},0}_\Psi}\\
            =&\langle \left[T_{(\partial_yu\partial_y^2\theta+(\partial_yu)^3-\partial_y\theta\partial_y^2u)}\int_0^y\mathcal{U} d\tilde{y}\right]_\Phi,\lambda_\Phi\rangle_{H^{s+\frac{1}{2},0}_\Psi}.
        \end{aligned}
    \end{equation}
    From Lemmas \ref{lem:paraestimate}, \ref{lem:Poincare} and the inequality \eqref{ineq:nonlinear}, we obtain that
    \begin{equation}
        \begin{aligned}
            |B_{14}|\leq& C(1+t)^\frac{1}{4}\left(\|[\partial_yu\partial_y^2\theta]_\Phi\|_{H^{\frac{1}{2}+,0}_\Psi}+\|[(\partial_yu)^3]_\Phi\|_{H^{\frac{1}{2}+,0}_\Psi}+\|[\partial_y\theta\partial_y^2u]_\Phi\|_{H^{\frac{1}{2}+,0}_\Psi}\right)\|\mathcal{U}_\Phi\|_{H^{s+\frac{1}{4},0}_\Psi}\|\lambda_\Phi\|_{H^{s+\frac{3}{4},0}_\Psi}\\
            \leq& C\left((1+t)^\frac{1}{2}\|\partial_y^2u_\Phi\|_{H^{\frac{1}{2}+,0}_\Psi}\|\partial_y^2\theta_\Phi\|_{H^{\frac{1}{2}+,0}_\Psi}+(1+t)^\frac{3}{4}\|\partial_y^2u_\Phi\|_{H^{\frac{1}{2}+,0}_\Psi}^2\|\partial_yu_\Phi\|_{H^{\frac{1}{2}+,0}_\Psi}\right)\\
            &\times\|\mathcal{U}_\Phi\|_{H^{s+\frac{1}{4},0}_\Psi}\|\lambda_\Phi\|_{H^{s+\frac{3}{4},0}_\Psi}\\
            \leq& C\dot{\mu}(t)\|\mathcal{U}_\Phi\|_{H^{s+\frac{1}{4},0}_\Psi}\|\lambda_\Phi\|_{H^{s+\frac{3}{4},0}_\Psi}\\
            \leq& C\dot{\mu}(t)(\|\mathcal{U}_\Phi\|_{H^{s+\frac{1}{4},0}_\Psi}^2+\|\lambda_\Phi\|_{H^{s+\frac{3}{4},0}_\Psi}^2).
        \end{aligned}
    \end{equation}

    (9) \underline{Estimate of $B_{15}$}.
    Decompose $B_{15}$ into
    \begin{equation}
        \begin{aligned}
            B_{15}=&-\langle \left[(T_uT_{\partial_x\partial_yu}-T_{u\partial_x\partial_yu})\int_0^y\mathcal{U}d\tilde{y}\right]_\Phi,\lambda_\Phi\rangle_{H^{s+\frac{1}{2},0}_\Psi}-\langle \left[(T_vT_{\partial_y^2u}-T_{v\partial_y^2u})\int_0^y\mathcal{U}d\tilde{y}\right]_\Phi,\lambda_\Phi\rangle_{H^{s+\frac{1}{2},0}_\Psi}\\
            &+\langle \left[(T_\theta T_{\partial^3_yu}-T_{\theta\partial^3_yu})\int_0^y\mathcal{U}d\tilde{y}\right]_\Phi,\lambda_\Phi\rangle_{H^{s+\frac{1}{2},0}_\Psi}\\
            \triangleq&B_{15,1}+B_{15,2}+B_{15,3}.
        \end{aligned}
    \end{equation}
    It follows from Lemmas \ref{lem:commutator} and \ref{lem:Poincare} that
    \begin{equation}
        \begin{aligned}
            |B_{15,1}|+|B_{15,3}|\leq& C(1+t)^\frac{1}{2}\left(\|\partial_yu_\Phi\|_{H^{\frac{5}{2}+,0}_\Psi}^2+\|\partial_y\theta_\Phi\|_{H^{\frac{3}{2}+,0}_\Psi}\|\partial_y^3u_\Phi\|_{H^{\frac{3}{2}+,0}_\Psi}\right)\|\mathcal{U}_\Phi\|_{H^{s-\frac{1}{2},0}_\Psi}\|\lambda_\Phi\|_{H^{s+\frac{1}{2},0}_\Psi}\\
            \leq& C\dot{\mu}(t)\|\mathcal{U}_\Phi\|_{H^{s-\frac{1}{2},0}_\Psi}\|\lambda_\Phi\|_{H^{s+\frac{1}{2},0}_\Psi}\\
            \leq& C\dot{\mu}(t)(\|\mathcal{U}_\Phi\|_{H^{s-\frac{1}{2},0}_\Psi}^2+\|\lambda_\Phi\|_{H^{s+\frac{1}{2},0}_\Psi}^2).
        \end{aligned}
    \end{equation}
In a way similar to the study of $A_2$ given in Lemma \ref{lem:rightofmathcalU},    one can estimate $B_{15,2}$ as:
    \begin{equation}\label{a21}
        \begin{aligned}
            |B_{15,2}|\leq& C(1+t)^\frac{1}{4}\|v_\Phi\|_{L_y^\infty(H_x^{\frac{3}{2}+})}\|\partial_y^2u_\Phi\|_{H^{\frac{3}{2}+,0}_\Psi}\|\mathcal{U}_\Phi\|_{H^{s-\frac{1}{2},0}_\Psi}\|\lambda_\Phi\|_{H^{s+\frac{1}{2},0}_\Psi}\\
            \leq& C\bigg((1+t)^\frac{1}{2}\|u_\Phi\|_{H^{\frac{5}{2}+,0}_\Psi}\|\partial_y^2u_\Phi\|_{H^{\frac{3}{2}+,0}_\Psi}+(1+t)^\frac{1}{2}\|\partial_y^2\theta_\Phi\|_{H^{\frac{3}{2}+,0}_\Psi}\|\partial_y^2u_\Phi\|_{H^{\frac{3}{2}+,0}_\Psi}\\
            &+(1+t)^\frac{1}{4}\|\partial_yu_\Phi\|_{H^{\frac{3}{2}+,0}_\Psi}^2\|\partial_y^2u_\Phi\|_{H^{\frac{3}{2}+,0}_\Psi}\bigg)\|\mathcal{U}_\Phi\|_{H^{s-\frac{1}{2},0}_\Psi}\|\lambda_\Phi\|_{H^{s+\frac{1}{2},0}_\Psi}\\
            \leq& C\dot{\mu}(t)\|\mathcal{U}_\Phi\|_{H^{s-\frac{1}{2},0}_\Psi}\|\lambda_\Phi\|_{H^{s+\frac{1}{2},0}_\Psi}\\
            \leq& C\dot{\mu}(t)(\|\mathcal{U}_\Phi\|_{H^{s-\frac{1}{2},0}_\Psi}^2+\|\lambda_\Phi\|_{H^{s+\frac{1}{2},0}_\Psi}^2).
        \end{aligned}
    \end{equation}
    Summarizing the above estimates, we get
    \begin{equation}
        |B_{15}|\leq C\dot{\mu}(t)(\|\mathcal{U}_\Phi\|_{H^{s-\frac{1}{2},0}_\Psi}^2+\|\lambda_\Phi\|_{H^{s+\frac{1}{2},0}_\Psi}^2).
    \end{equation}

    (10) \underline{Estimate of $B_{16}$}.
    We deduce from Lemmas \ref{lem:commutator} and \ref{lem:Poincare} that
    \begin{equation}
        \begin{aligned}
            |B_{16}|\leq& C\|u_\Phi\|_{L_y^\infty(H_x^{\frac{3}{2}+})}\|\partial_yu_\Phi\|_{H^{\frac{3}{2}+,0}_\Psi}\|\int_0^y\mathcal{U}_\Phi d\tilde{y}\|_{L_y^\infty(H_x^{s+\frac{1}{4}})}\|\lambda_\Phi\|_{H^{s+\frac{3}{4},0}_\Psi}\\
            \leq& C(1+t)^\frac{1}{2}\|\partial_yu_\Phi\|_{H^{\frac{3}{2}+,0}_\Psi}^2\|\mathcal{U}_\Phi\|_{H^{s+\frac{1}{4},0}_\Psi}\|\lambda_\Phi\|_{H^{s+\frac{3}{4},0}_\Psi}\\
            \leq& C\dot{\mu}(t)\|\mathcal{U}_\Phi\|_{H^{s+\frac{1}{4},0}_\Psi}\|\lambda_\Phi\|_{H^{s+\frac{3}{4},0}_\Psi}\\
            \leq& C\dot{\mu}(t)(\|\mathcal{U}_\Phi\|_{H^{s+\frac{1}{4},0}_\Psi}^2+\|\lambda_\Phi\|_{H^{s+\frac{3}{4},0}_\Psi}^2).
        \end{aligned}
    \end{equation}

    (11) \underline{Estimates of $B_{17}$ and $B_{18}$}.
    Similarly to the estimate of $B_{15,2}$ given in \eqref{a21}, we have
    \begin{equation}
        \begin{aligned}
            |B_{17}|\leq& C(1+t)^\frac{1}{4}\|v_\Phi\|_{L_y^\infty(H_x^{\frac{3}{2}+})}\|\partial_y^2u_\Phi\|_{H^{\frac{3}{2}+,0}_\Psi}\|\mathcal{U}_\Phi\|_{H^{s-\frac{1}{2},0}_\Psi}\|\lambda_\Phi\|_{H^{s+\frac{1}{2},0}_\Psi}\\
            \leq& C\dot{\mu}(t)(\|\mathcal{U}_\Phi\|_{H^{s-\frac{1}{2},0}_\Psi}^2+\|\lambda_\Phi\|_{H^{s+\frac{1}{2},0}_\Psi}^2),
        \end{aligned}
    \end{equation}
   and similar to the above estimate of $B_{16}$, one has
    \begin{equation}
        \begin{aligned}
            |B_{18}|\leq& C(1+t)^\frac{1}{2}\|\partial_y\theta_\Phi\|_{H^{\frac{3}{2}+,0}_\Psi}\|\partial_y^2u_\Phi\|_{H^{\frac{3}{2}+,0}_\Psi}\|\partial_y\mathcal{U}_\Phi\|_{H^{s-\frac{1}{2},0}_\Psi}\|\lambda_\Phi\|_{H^{s+\frac{1}{2},0}_\Psi}\\
            \leq& C_\eta(1+t)\|\partial_y\theta_\Phi\|_{H^{\frac{3}{2}+,0}_\Psi}^2\|\partial_y^2u_\Phi\|_{H^{\frac{3}{2}+,0}_\Psi}^2\|\lambda_\Phi\|_{H^{s+\frac{1}{2},0}_\Psi}^2+\eta\|\partial_y\mathcal{U}_\Phi\|_{H^{s-\frac{1}{2},0}_\Psi}^2\\
            \leq& C_\eta\dot{\mu}(t)\|\lambda_\Phi\|_{H^{s+\frac{1}{2},0}_\Psi}^2+\eta\|\partial_y\mathcal{U}_\Phi\|_{H^{s-\frac{1}{2},0}_\Psi}^2.
        \end{aligned}
    \end{equation}

    (12) \underline{Estimate of $B_{19}$}.
    By using Lemmas \ref{lem:paraestimate} and \ref{lem:Poincare} twice, we obtain
    \begin{equation}
        \begin{aligned}
            |B_{19}|\leq& C\left((1+t)^\frac{1}{4}+(1+t)^\frac{1}{2}\|\partial_y\theta_\Phi\|_{H^{\frac{1}{2}+,0}_\Psi}\right)\|\partial_y^3u_\Phi\|_{H^{\frac{1}{2}+,0}_\Psi}\|\mathcal{U}_\Phi\|_{H^{s+\frac{1}{4},0}_\Psi}\|\lambda_\Phi\|_{H^{s+\frac{3}{4},0}_\Psi}\\
            \leq& C\dot{\mu}(t)\|\mathcal{U}_\Phi\|_{H^{s+\frac{1}{4},0}_\Psi}\|\lambda_\Phi\|_{H^{s+\frac{3}{4},0}_\Psi}\\
            \leq& C\dot{\mu}(t)(\|\mathcal{U}_\Phi\|_{H^{s+\frac{1}{4},0}_\Psi}^2+\|\lambda_\Phi\|_{H^{s+\frac{3}{4},0}_\Psi}^2).
        \end{aligned}
    \end{equation}
    
    Combining the estimates of $B_1,\cdots,B_{19}$ given above, we complete the proof of Lemma \ref{lem:rightoflambda} immediately.
\end{proof}

\end{appendices}

	\vspace{.1in}
	\par{\bf Acknowledgements.} This research was supported by National Key R\&D Program of China under grant 2024YFA1013302, and National Natural Science Foundation of China under grant 12331008.

\bibliographystyle{plain}
\bibliography{ref}

\end{document}